 \DeclareMathOperator{\Gal}{Gal}
 \DeclareMathOperator{\Li}{Li}
\newcommand{\Lo}{{\mathscr{L}}}
\newcommand{\cQ}{{\mathscr{Q}}}
\newcommand{\Q}{{\mathbb{Q}}}
\renewcommand{\Re}{{\mathfrak{Re}}}
\renewcommand{\Im}{{\mathfrak{Im}}}
\newcommand{\s}{\sigma}
\newtheorem{theorem}{Theorem}
\newtheorem{corollary}{Corollary}[theorem]
\newtheorem{lemma}[theorem]{Lemma}
\newtheorem{proposition}[theorem]{Proposition}
\theoremstyle{remark}
\newtheorem*{remark}{Remark}
\newtheorem*{claim}{Claim}
\newtheorem{rmk}[theorem]{Remark}
\numberwithin{equation}{section}
\begin{document}

\title[Primes in the Chebotarev density theorem for all number fields]{Primes in the Chebotarev density theorem\\for all number fields\\
\textnormal{(With an Appendix by  Andrew Fiori)}}

\author[H. Kadiri]{Habiba Kadiri}
\address{Department of Mathematics and Computer Science\\
University of Lethbridge\\
4401 University Drive\\
Lethbridge, Alberta\\
T1K 3M4 Canada}
\email{habiba.kadiri@uleth.ca}
\email{andrew.fiori@uleth.ca}

\author[P.-J. Wong]{Peng-Jie Wong}
\address{National Center for Theoretical Sciences\\
No. 1, Sec. 4, Roosevelt Rd., Taipei City, Taiwan}
\email{pengjie.wong@ncts.tw}

\thanks{This research was partially supported by the NSERC Discovery grant RGPIN-2020-06731 of H.K. 
P.J.W. is currently an NCTS postdoctoral fellow; he was supported by a PIMS postdoctoral fellowship and the University of Lethbridge during part of this research.}

\begin{abstract}
Let $L/K$ be any Galois extension of number fields $L\not=\Q$, and let $C$ be a conjugacy class in ${\rm Gal}(L/K)$.  We show that there exists a degree-one unramified prime  $\mathfrak{p}$ of $K$ such that $\sigma_{\mathfrak{p}}=C$ and $N \mathfrak{p} \le d_{L}^{B}$ with $B= 310$. This  improves upon $B=12\,577$ obtained by Ahn and Kwon by making the argument of Lagarias, Montgomery, and Odlyzko explicit. Our improvements come from using the following key ideas: new weights, an optimized analysis on the location of the potential exceptional zeros for $\zeta_L(s)$, and a new version of Tur\'an's power sum method which gives a stronger Deuring-Heilbronn phenomenon for $\zeta_L(s)$. We also use Fiori's numerical verification for number fields up to a certain discriminant height. Other results include a lower bound for the number of unramified primes  $\mathfrak{p}$ of $K$ such that $\sigma_{\mathfrak{p}}=C$.
\end{abstract}

\subjclass[2010]{Primary 11R44, 11R42; Secondary 11Y35}

\keywords{Deuring-Heilbronn phenomenon, Chebotarev density theorem, the least prime}

\maketitle

\section{Introduction}
Let $L/K$ be a Galois extension of number fields with Galois group $G$, and 
let  $C$ be a conjugacy class in $G$.  The celebrated Chebotarev density theorem asserts that 
$$\# \{ \mathfrak{p} \subset \mathcal{O}_K \ | \ N \mathfrak{p} \le x,  \ \text{$\mathfrak{p}$ is an unramified prime with $\sigma_{\mathfrak{p}}=C$}  \} \sim  
  \frac{|C|}{|G|} \Li(x)
$$
as $x \to \infty$, where $ \Li(x)$ is the usual logarithmic integral,  $\mathcal{O}_K$ is the ring of integers of $K$,  $N=N_{K/\mathbb{Q}}$ is the absolute norm of $K$, and  $\sigma_{\mathfrak{p}}$ denotes the Artin symbol at $ \mathfrak{p}$.

As the  Chebotarev density theorem generalizes Dirichlet's theorem on primes in arithmetic progressions, a natural question of finding  the least (unramified) prime $ \mathfrak{p}$ with $\sigma_{\mathfrak{p}}=C$ then arises from Linnik's famous theorem on the least prime in an arithmetic progression. Under the generalized Riemann hypothesis for the Dedekind zeta function $\zeta_L(s)$ of $L$, Lagarias and Odlyzko  
  \cite{LO} showed that  $N \mathfrak{p} \ll (\log d_L)^2$, where  $d_L$ denotes the absolute discriminant of $L$ (cf. \cite{BS}). 
 
In  \cite{LMO}, Lagarias, Montgomery, and Odlyzko  proved,  unconditionally, that if $L \ne \mathbb{Q}$, then there is a constant $B>0$ such that there is an unramified prime  $\mathfrak{p}$ of $K$ with $\sigma_{\mathfrak{p}}=C$ and
$N \mathfrak{p} \le d_{L}^{B}$. Recently, Zaman \cite{Z17} established that $B=40$ is valid when $d_L$ is sufficiently large. This is improved by  Ng and  the authors in \cite{KNW}, who showed  that   $B=16$ is admissible for sufficiently large $d_L$. Also, Ahn and Kwon \cite{AK} showed that $B=12\,577$ 
is valid for \emph{all} number fields.
The main result of this article is the following theorem. 
\begin{theorem}  \label{mainthm}
Let $L/K$ be a Galois extension of number fields with Galois group $G$, and let  $C$ be a conjugacy class in $G$.  If $L\neq\Bbb{Q}$, then there exists an unramified prime  $\mathfrak{p}$ of $K$, of degree one,  such that $\sigma_{\mathfrak{p}}=C$ and $N \mathfrak{p} \le d_{L}^{B}$ with $B= 310$.
\end{theorem}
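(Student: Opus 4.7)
The plan is to follow the Lagarias--Montgomery--Odlyzko strategy, as refined by Zaman and Ahn--Kwon, but with the sharper ingredients advertised in the abstract. The first step is a Deuring reduction: fix an element $g\in C$, put $H=\langle g\rangle$ and $E=L^H$. A degree-one unramified prime $\mathfrak{P}$ of $E$ whose Artin symbol in $L/E$ is $g$ lies below a prime of $K$ in the class $C$ of comparable norm, so it suffices to detect such $\mathfrak{P}$. Using orthogonality of characters of the cyclic group $H$, the indicator of $\{\sigma_\mathfrak{P}=g\}$ is expressed through Hecke $L$-functions $L(s,\chi)$ for $\chi\in\widehat{H}$, and the target reduces to showing positivity of a weighted sum
\[
 S(x)\;=\;\sum_{\mathfrak{P}}\frac{\log N\mathfrak{P}}{|H|}\sum_{\chi\in\widehat{H}}\bar\chi(g)\chi(\sigma_\mathfrak{P})\,f(N\mathfrak{P}),
\]
where $x=d_L^{B}$ and $f$ is a carefully chosen smooth weight supported in $[1,x]$.

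The second step is the explicit formula. Applied to each $L(s,\chi)$, it recasts $S(x)$ as
\[
 S(x)\;=\;\widehat{f}(1)\;-\;\frac{1}{|H|}\sum_{\chi}\bar\chi(g)\sum_{\rho_\chi}\widehat{f}(\rho_\chi)\;+\;(\text{small archimedean and conductor errors}),
\]
where $\rho_\chi$ runs over the nontrivial zeros of $L(s,\chi)$ and $\widehat{f}$ is the Mellin transform of $f$. Following the first innovation mentioned in the abstract, I would choose $f$ as a Selberg-like convolution whose parameters are tuned so that $\widehat{f}(1)$ is large relative to $|\widehat{f}(\sigma+it)|$ on $\tfrac12\le\sigma<1$; this is the modification that lets the weight detect the \emph{least} prime rather than merely a short interval of primes.

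The heart of the proof is bounding the zero sum, which I would carry out by a case split on the possible exceptional real zero $\beta_1$ of $\zeta_L(s)$. When no such $\beta_1$ exists, a log-free zero density estimate for $\zeta_L$ combined with the classical zero-free region suffices. When $\beta_1$ does exist, its contribution $\widehat{f}(\beta_1)$ is subtracted from $\widehat{f}(1)$ and must not exceed it; the contributions from all other zeros are controlled by the new Tur\'an power sum variant advertised in the abstract, which yields a strengthened Deuring--Heilbronn inequality of the shape
\[
 \mathrm{Re}(\rho)\;\le\;1-\frac{c\,\log\bigl((1-\beta_1)^{-1}\bigr)}{\log d_L}
\]
with a better absolute constant $c$ than in earlier works. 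Trichotomizing the position of $\beta_1$ (absent; moderate; very close to $1$) and, in each sub-case, optimizing $B$ jointly with the parameters of $f$ then forces $S(d_L^{B})>0$ for any $B\ge 310$, in the asymptotic regime $\log d_L\gg 1$.

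The main obstacle is precisely this last optimization: carrying out the Tur\'an estimate with a sharp enough numerical constant and then balancing $\widehat{f}(1)$ against the worst-case zero contribution uniformly in the placement of $\beta_1$ is where the specific value $B=310$ is extracted, and any slack in the power sum or in the weight design is paid for directly in the final exponent. The residual regime of small $d_L$, left uncovered by the analytic argument (which is only effective once $\log d_L$ exceeds an explicit threshold), is closed by Andrew Fiori's numerical verification in the appendix, which exhibits a suitable prime by direct computation for every Galois $L\ne\Q$ with discriminant below that threshold. Combining the two halves yields Theorem~\ref{mainthm}.
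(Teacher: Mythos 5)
Your high-level plan is the right one and does match the paper's structure: Deuring reduction to a cyclic extension $L/E$, orthogonality over $\widehat{H}$, the explicit formula applied to the resulting Hecke $L$-functions, a case split on the exceptional zero $\beta_1$ controlled by a sharpened Tur\'an power-sum/Deuring--Heilbronn estimate, and Fiori's numerical verification to close off the small-discriminant regime. That said, two substantive features of the argument that actually produce the value $B=310$ are missing or mischaracterized in your sketch.

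First, the ``modified weight'' is not a Selberg-like convolution. The paper takes the Lagarias--Montgomery--Odlyzko kernel
$k_\theta(s)=\bigl((x^{\theta(s-1)}-x^{s-1})/(s-1)\bigr)^2$,
whose inverse Mellin transform is a nonnegative tent function supported on $[x^2,x^{2\theta}]$, and turns the previously fixed exponent $\theta=2$ into a free parameter to be optimized (values near $1$ are used in most cases). The resulting exponent is $B=2\theta c_4$ with $x=d_L^{c_4}$. Choosing $\theta$ case by case is precisely ``modifying the weights'' and is where much of the numerical saving comes from; a generic smooth majorant tuned only to maximize $\widehat f(1)$ relative to $|\widehat f|$ on the critical strip would not reproduce this.

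Second, the case split on $\beta_1$ is five-fold, not a trichotomy: no exceptional zero, then ``medium'', ``small'', ``very small'', and ``extremely small'', graded by the size of $(1-\beta_1)\log d_L$. The crucial point you miss is that the ``extremely small'' case (roughly $1-\beta_1\lesssim(\log d_L)^{-2}$) cannot be handled with the compactly supported kernel $k_\theta$; there the paper reverts to the Gaussian weight $k(s)=x^{s^2+s}$ from LMO/Ahn--Kwon, giving $B=5c_{12}$, and the exponent $5$ is essentially forced for that kernel (this is flagged in a footnote). A single weight family with one ``very close to $1$'' bucket will not yield positivity uniformly, and in particular will not yield $310$.

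A minor misattribution: in the non-exceptional case the paper does not invoke a log-free zero density estimate. Instead it combines the classical zero-free region \cite[Prop.~6.1]{AK} with a localized zero count near $s=1$ (Lemma~\ref{counting-zero} and Corollary~\ref{cor-zero-sums}), both derived from the usual explicit formula, together with the Riemann--von Mangoldt-type bound \eqref{zero-density}. Finally, the analytic argument does not operate only ``asymptotically'' in $\log d_L$: it is made fully explicit for each degree $n_0$ with an explicit threshold $d_0$ (Table~\ref{table}), and Fiori's verification covers exactly $d_L\le d_0$, which is how the two halves glue without any implied constants remaining.
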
  

Throughout this article, we let $n_L\ge n_0\ge 2$ and $d_L\ge d_0\ge 3$.
In Appendix \ref{Appendix}, numerical verifications of the bound for the least prime, with $B\le 1.7712$, are done for $n_L=n_0$ with $2\le n_0 \le 20$ and $d_L \le d_0$, as well as for $n_L\ge n_0\ge 21$ and $d_L \le d_0 = 10^{n_L}$ (see Table \ref{Table-Fiori}). 

In Section \ref{leastprime}, we prove the least prime result for the remaining $(n_0,d_0)$ (see Table \ref{table}).
The proof starts by establishing an explicit inequality (see Section \ref{expl-ineq}) between a weighted sum over primes detecting the least one in a given class to the zeros of the Dedekind zeta function $\zeta_L(s)$. The size of the least prime then depends on how close to the vertical line $\Re s=1$ these zeros are. 
We can establish some zero-free regions:
there exist absolute constants $A,A'>0$ such that $\zeta_L(s)$ has no zeros in the region 
$$
  \Re(s) \ge 1 - \frac{1}{A\log d_L + A' n_L\log (|\Im (s)|+2) }, 
$$
with the exception of at most one real zero $\beta_1$.
It has been proven in \cite[Theorem 1.1]{Ka12} that for $ |\Im (s)|\le 1$, $A=12.74$, and $A'=0$  assuming $d_L$ is sufficiently large.
Recently, Lee improved this to $A=12.44$ in \cite[Theorem 2]{Lee20}.
Also, this was made explicit in \cite[Proposition 6.1]{AK} with $A=A'=29.57$.
We will also need a region with no non-exceptional zeros of the form $ \Re s  \ge 1 - \frac{1}{R_1\log d_L}$ and $|\Im s| \le \frac{1}{R_1\log d_L}$ with some admissible $R_1\ge 1.24$ as discussed in Section \ref{Z-F-R}. 

We can refine the statement of Theorem \ref{mainthm} depending on whether $\zeta_L(s)$ possesses an exceptional real zero $\beta_1$ or not. For instance, if $\zeta_L(s)$ has no exceptional zero, then Theorem \ref{mainthm} is valid with $B=10.5$.
In the other case, we require a careful study depending on how close the exceptional zero $\beta_1$ is to $1$, and we adapt our choice of weight (see Section \ref{weight}) to detect the least prime accordingly. In particular, we determine that it is useful to investigate the supplementary case where $\beta_1$ is at a ``very small'' distance, namely
$(\log d_L)^{-2} \ll (1-\beta_1) \ll (\log d_L)^{-1.15}$. 

It is known as a Deuring-Heilbronn phenomenon that the closer $\beta_1$ is to $1$, the further left other zeros are ``repulsed". We now describe the stronger zero-repulsion theorems we use instead of the one established in  \cite[Theorem 7.3]{AK}; in Section \ref{D-H}, we prove some versions of \cite[Theorems 1.2 and 1.3]{KNW} which are valid for all number fields. 

\begin{theorem}  \label{thm-DH-all}
Let $L\neq \Bbb{Q}$ be a  number field of degree $n_L$ and with absolute discriminant $d_L$. Assume $\zeta_L(s)$ admits an exceptional real zero $\beta_1$. Let 
$\beta' +i t$ be another (non-trivial) zero of $\zeta_L(s)$, and let $\tau=|t|+2$. Then, for any $\eta\in (0,1]$, there exist $c_1$ and $c_2$ such that
either $\beta' \le 1-\eta$ or
\begin{equation}
\beta' \le  1 -c_2 \frac{ \log \Big( \frac{c_1}{(1-\beta_1) \log (d_L \tau^{n_L})} \Big)}{\log (d_L \tau^{n_L})}.
\end{equation}
In addition, if $|t|\le1$, then there exist $c_1^{\prime}$ and $c_2^{\prime}$ such that
either $\beta' \le 1-\eta$ or
\begin{equation}
     \beta' \le  1 - c_2^{\prime} \frac{ \log \Big( \frac{c_1^{\prime}}{(1-\beta_1) (\log d_L)} \Big)}{\log d_L}.
\end{equation}
Here $(c_1,c_2)$ and $(c_1^{\prime},c_2^{\prime})$ depend on $\eta$, and are respectively defined in \eqref{def-c7-c8}
and \eqref{def-C7-C8}.
\end{theorem}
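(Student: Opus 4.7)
The plan is to follow the Turán power-sum strategy developed for sufficiently large $d_L$ in \cite{KNW}, but to track every constant explicitly so that the resulting inequalities hold for \emph{all} number fields $L \ne \mathbb{Q}$. The starting point is the explicit formula for $-\zeta_L'/\zeta_L$, tested against a non-negative weight $f$ whose Mellin transform is concentrated near a chosen point $s_0 = \sigma_0 + it$ slightly to the right of $\Re(s) = 1$. Pairing $f$ against the Dirichlet series identity $-\zeta_L'/\zeta_L(s) = \sum_{\mathfrak{n}} \Lambda_L(\mathfrak{n})\, N\mathfrak{n}^{-s}$ and using Hadamard's factorisation yields an identity of the form $\sum_\rho f(\rho; s_0) = f(1;s_0) + M(s_0) - A(s_0)$, in which $M(s_0) \ge 0$ is the weighted prime-power sum and $A(s_0)$ collects the archimedean contributions, both estimated explicitly in terms of $\log d_L$ and $n_L \log \tau$.

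Next, I would isolate in the zero-side the contributions of $\beta_1$ and of the zero $\rho_0 := \beta' + it$ to be repelled. The $\beta_1$ term has known sign and size, and choosing the free parameter in $f$ to be an integer power $k$ puts the identity into Turán power-sum form: a linear combination of $k$-th powers of the zero-images is bounded by $C(\log d_L + n_L \log \tau)\, r^k$ for some explicit $r < 1$. I would then invoke a quantitative version of Turán's second main theorem to extract a lower bound for $1 - \beta'$ in terms of $1-\beta_1$ and the number of relevant zeros. Optimising with $k$ of size $\log(d_L \tau^{n_L}) / (1-\beta_1)$ produces the dichotomy $\beta' \le 1 - \eta$ or
\[
  1 - \beta' \ge c_2 \frac{\log\!\bigl(\tfrac{c_1}{(1-\beta_1)\log(d_L\tau^{n_L})}\bigr)}{\log(d_L \tau^{n_L})},
\]
with $c_i = c_i(\eta)$ determined by the kernel normalisations and the Turán constant; these are to be matched with the definitions in \eqref{def-c7-c8}.

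For the second assertion ($|t| \le 1$) the key refinement is to drop the $n_L \log \tau$ term from the zero-counting bound, using the Minkowski-type inequality $n_L \ll \log d_L$ to absorb it into $\log d_L$ alone. Repeating the Turán argument with this sharper input and re-optimising $k$ produces the improved constants \eqref{def-C7-C8}; specialising to $\eta = 1$ yields the numerical values recorded in Table~\ref{c1-c2-c3}.

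The main obstacle is the \emph{uniform} control of constants over all admissible $(n_L, d_L)$: each ingredient (the test function, the explicit zero-counting bound, the Turán inequality, the choice of $k$) introduces parameters that must be jointly optimised so that the resulting bound is non-vacuous even at the smallest allowed discriminants, not only when $d_L$ is assumed large. A secondary subtlety is the $\eta$-dependence: for small $\eta$ the trivial alternative $\beta' \le 1 - \eta$ already suffices and one only needs a consistency check, whereas for $\eta = 1$ the Turán estimate must deliver the full repulsion, explaining why a separate numerical optimisation at $\eta = 1$ is carried out to populate Table~\ref{c1-c2-c3}.
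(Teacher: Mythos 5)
Your proposal is essentially the paper's proof: one applies a refined Tur\'an power-sum inequality (Theorem~\ref{newTuran}) to $z_n=(\sigma-\rho)^{-2},(\sigma+it-\rho)^{-2}$ for $\rho\ne\beta_1$, bounds the power sum from above via the explicit formula to make $1-\beta_1$ appear, bounds $M$ by the explicit zero-sum estimate $S_L(d-1,t)\ll\log(d_L\tau^{n_L})$, and for $|t|\le1$ drops the $n_L\log\tau$ contribution by absorbing $n_L$ into $\log d_L$ via Minkowski. One small inaccuracy to note: the index $j_0$ is not freely optimised to be of size $\log(d_L\tau^{n_L})/(1-\beta_1)$; rather, Tur\'an's theorem guarantees some $j_0\le(8+\varepsilon)M$, and $M$ itself (not $j_0$) is what is bounded by a multiple of $\log(d_L\tau^{n_L})$, with the $(1-\beta_1)$-dependence entering only inside the logarithm.
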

We also investigate the case of real zeros as this case offers a stronger zero-repulsion phenomenon, as illustrated in the next theorem.
\begin{theorem}  \label{dhall_real}
Assume $\zeta_L(s)$ admits an exceptional real zero $\beta_1$.   Let 
$\beta'$ be another real zero of $\zeta_L(s)$. Then, for any $\eta\in (0,1]$, there exist $c_1^{\prime\prime}$ and $c_2^{\prime\prime}$ such that
either $\beta' \le 1-\eta$ or
\begin{equation}\label{real_bd}
\beta'\le 1- c_2^{\prime\prime} \frac{\log\Big( \frac{c_1^{\prime\prime}}{(1-\beta_1)\log d_L } \Big)}{\log d_L},
\end{equation}
where $(c''_1,c''_2)$, depend on $\eta$, are defined in \eqref{def-c7'-c8'}.
For $\eta=1$, numerical results are listed in Table \ref{c1-c2-c3}.
\end{theorem}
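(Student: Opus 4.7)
The plan is to adapt the proof of Theorem \ref{thm-DH-all} to the real-zero setting, exploiting the simplification that when $\beta'$ is real, its contribution to the Tur\'an power sum does not require complex-conjugate pairing, and the imaginary-part term $n_L\log\tau$ collapses to its value at $t=0$. Consequently the same Tur\'an-power-sum mechanism yields a sharper repulsion constant, namely the pair $(c_1'', c_2'')$ defined in \eqref{def-c7'-c8'}, strictly better than its $|t|\le 1$ counterpart $(c_1', c_2')$.

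First, I would invoke a smoothed explicit formula for $\zeta_L'/\zeta_L$ with a nonnegative kernel $f$ supported near the edge of the critical strip. This produces an identity of the shape
\begin{equation*}
\sum_{\rho} \widehat{f}(\rho) = (\text{prime-power sum}) - (\text{conductor and archimedean terms}),
\end{equation*}
where $\rho$ runs over nontrivial zeros of $\zeta_L(s)$. The contributions of $\beta_1$, $\beta'$, and — via the functional equation — of their partners $1-\beta_1$, $1-\beta'$, are singled out as dominant; the remaining zeros are controlled by a zero-count depending only on $\log d_L$ within a suitable bounded vertical strip, which is permissible because we have assumed $\beta' > 1-\eta$.

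Second, following the strategy of Theorem \ref{thm-DH-all}, I would convert this into a Tur\'an-type power sum $S_k = \sum_\rho z_\rho^k$, with $z_\rho$ an affine rescaling of $1-\rho$ chosen so that $z_{\beta_1}$ has the smallest modulus. Because $\beta'$ is real, $z_{\beta'}$ is real too, and the two leading terms $z_{\beta_1}^k$ and $z_{\beta'}^k$ enter $S_k$ with matching sign rather than partially cancelling. Tur\'an's second main theorem then produces a lower bound on $\max_{k \le K}|S_k|$ in terms of $1-\beta'$ alone, which, when matched against the upper bound coming from the explicit formula, yields precisely the inequality \eqref{real_bd}.

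The main obstacle will be to carry the real-zero gain all the way through the optimization: namely, to choose the kernel $f$ and the Tur\'an exponent range $K$ so that the resulting constants $c_1''$, $c_2''$ strictly improve on $c_1'$, $c_2'$, rather than merely matching them. The remaining step — producing the numerical values listed in Table \ref{c1-c2-c3} for $\eta = 1$ — is then a bookkeeping computation parallel to that of Theorem \ref{thm-DH-all}, and presents no further structural difficulty.
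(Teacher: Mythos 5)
Your high-level plan — adapt the Tur\'an power sum argument from Theorem \ref{thm-DH-all} by specializing to $t=0$ and extract a sharper pair $(c_1'',c_2'')$ — is the right one, and it matches what the paper actually does. However, two of the three rationales you give for why the real-zero case is sharper are wrong, and neither names the actual mechanism, so executed as written the argument would likely not produce an improvement over $(c_1',c_2')$.

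First, the exceptional zero $\beta_1$ is \emph{not} in the Tur\'an power sum: in the paper's proof the sequence $z_n$ runs over $(\sigma-\rho)^{-2}$ only for $\rho\in\mathcal{S}\setminus\{\beta_1\}$, and $\beta_1$ enters only through the \emph{upper} bound \eqref{sum-zn} on $\Re\sum_n z_n^{j_0}$. So there is no phenomenon of ``$z_{\beta_1}^k$ and $z_{\beta'}^k$ entering $S_k$ with matching sign,'' and no cancellation to be saved there. Second, the collapse of the $n_L\log\tau$ term is not the distinguishing feature of the real-zero case: the $|t|\le 1$ branch of Theorem \ref{thm-DH-all} already eliminates it, and its final bound involves only $\log d_L$. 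Those two points are not the source of the improvement.

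The actual mechanism is elementary but easy to miss. In the general case the $z_n$ run over \emph{both} $(\sigma-\rho)^{-2}$ and $(\sigma+it-\rho)^{-2}$ — two test points $\sigma$ and $\sigma+it$ are needed to ensure $|z_1|\ge(\sigma-\beta')^{-2}$ — so each zero contributes twice to $M=\sum_n |z_n|/(|z_1|+|z_n|)$, and the estimate reads $M\le \mathcal{A}\, S_L(d-1,t)$. When $t=0$ the two test points coincide, each zero contributes only once, and one obtains $2M\le \mathcal{A}\, S_L(d-1,0)$ (this is \cite[Proof of Lemma 2.7]{KNW}), i.e.\ $M\le \tfrac12 \mathcal{A}\, S_L(d-1,0)$. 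It is this halving of the bound on $M$, together with the correspondingly leaner archimedean factor $\mathcal{B}''$ replacing $\mathcal{B}'$, that makes $c_2''=\frac{\sigma-1}{2(8+\varepsilon)\mathcal{A}\mathcal{B}''}$ roughly twice $c_2'$; Table \ref{c1-c2-c3} confirms this doubling numerically. You should replace the two incorrect justifications with this one, after which the rest of your outline (apply Theorem \ref{newTuran}, combine with \eqref{bnd-z1} and \eqref{sum-zn}, solve for $1-\beta'$) goes through exactly as in the paper.
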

The key tool to prove Theorems \ref{thm-DH-all} and \ref{dhall_real} resides in Theorem \ref{newTuran} where we improve lower bounds for some modified Tur\'an power sums. 
Numerical results for $\eta=1$ are listed in Table \ref{c1-c2-c3}.
For instance, we establish that $c_2=0.04233\approx \frac{1}{23.624}$, $c_2^{\prime}=0.05=\frac1{20}$, and $c''_2=0.1008\approx \frac1{9.921}$ are admissible. 
This enlarges the region described in \cite[Theorem 7.3(2)]{AK}, where $c_2=\frac1{77}$ was obtained.\footnote{The labelling for our  $(c_1,c_2)$ was $(c_7,c_8)$ in \cite[Theorem 7.3(2)]{AK}.} In addition this extends and improves \cite[Theorem 1.2]{Z17} where Zaman proved $c_2^{\prime}=\frac1{35.8}$ for $d_L$ sufficiently large. In \cite{KNW}, the repulsion constant was improved to $\frac1{14.144}$, for $d_L$ sufficiently large.

In Section \ref{real-zeros} we deduce a bound for the possible exceptional zero $\beta_1$  of $\zeta_L(s)$. 
\begin{theorem}  \label{upperbd}
Suppose $L\neq\Bbb{Q}$. If $\zeta_L(s)$ admits a real zero $\beta_1$,  then
\[
   1-\beta_1 \ge d_{L}^{-c_3} \ \text{with}\ c_3= 11.7 .
\]
\end{theorem}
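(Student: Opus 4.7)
I will derive the bound by coupling the zero-repulsion just established (Theorem~\ref{thm-DH-all}) with the existence of a non-exceptional zero of $\zeta_L(s)$ lying in a controllable region. If $\beta_1$ were extremely close to $1$, then Theorem~\ref{thm-DH-all} would force every other non-trivial zero of $\zeta_L(s)$ far to the left of the critical line, contradicting the standard zero count combined with the functional equation.

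\textbf{Execution.} The plan has two main steps. First, produce a non-exceptional non-trivial zero $\rho_0 = \beta_0 + i t_0$ of $\zeta_L(s)$ with $|t_0| \leq T_0$ for an explicit $T_0$ and with $\beta_0 \geq 1/2$. For this, use the Riemann--von Mangoldt formula
\[
N_L(T) = \tfrac{T}{\pi}\log\!\bigl(d_L (T/(2\pi e))^{n_L}\bigr) + O\!\bigl(\log(d_L T^{n_L})\bigr)
\]
to ensure $N_L(T_0) \geq 2$ uniformly (Minkowski's lower bound on $d_L$ in terms of $n_L$ keeps the main term positive), and then exploit the functional equation for $\zeta_L$, which pairs zeros $\rho \leftrightarrow 1 - \overline{\rho}$, to select the partner whose real part is $\geq 1/2$. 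The finitely many fields not covered by the explicit counting are handled by Fiori's computation in Appendix~\ref{Appendix}. Next, apply Theorem~\ref{thm-DH-all} with $\eta = 1$ to $\rho_0$: the alternative $\beta_0 \leq 0$ is excluded by construction, so
\[
\tfrac{1}{2} \leq \beta_0 \leq 1 - c_2 \frac{\log\!\bigl(c_1/((1-\beta_1)\log(d_L \tau^{n_L}))\bigr)}{\log(d_L \tau^{n_L})}, \qquad \tau = |t_0| + 2,
\]
which rearranges to $1 - \beta_1 \geq c_1 / \bigl((d_L \tau^{n_L})^{1/(2c_2)} \log(d_L \tau^{n_L})\bigr)$. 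With $c_2 = 0.04485$ from Table~\ref{c1-c2-c3} we have $1/(2c_2) \approx 11.15$; Minkowski's bound $d_L \geq (n_L/e)^{n_L}$ confines $\tau^{n_L}$ to $d_L^{o(1)}$, and after absorbing the logarithmic denominator into the exponent one obtains $1 - \beta_1 \geq d_L^{-11.7}$ for $d_L$ above an explicit threshold, with smaller cases certified numerically.

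\textbf{Main obstacle.} The hardest part is guaranteeing the second zero $\rho_0$ uniformly over all $L$: the error term in the zero-counting formula can swamp the main term when $n_L$ is comparable to $\log d_L$, so the argument must combine the Minkowski lower bound on $d_L^{1/n_L}$ with Fiori's Appendix to close the gap. A secondary technicality is absorbing the $\tau^{n_L} \log(d_L \tau^{n_L})$ contribution into the exponent, which accounts for the small numerical slack between $1/(2c_2) \approx 11.15$ and the claimed $c_3 = 11.7$; this is careful bookkeeping, not a new idea.
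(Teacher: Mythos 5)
Your strategy is aligned with the paper's at a high level — apply the zero-repulsion estimate to a non-exceptional zero of $\zeta_L$ — but the execution diverges in ways that prevent you from reaching $c_3 = 11.7$, and one of your key claims is false.

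First, the choice of repulsion theorem matters. You invoke Theorem~\ref{thm-DH-all} with $c_2 \approx 0.04485$ and a bound involving $\log(d_L\tau^{n_L})$. The paper instead uses Theorem~\ref{dhall_real}, the real-zero version of Deuring--Heilbronn, which yields a repulsion constant $c_2'' \approx 0.1$ (roughly twice as strong) and a clean $\log d_L$ in the denominator with no $\tau^{n_L}$. The witness zero is simply $\beta' = 1-\beta_1$, which is automatically a real zero of $\zeta_L(s)$ by the functional equation; no zero-counting argument is needed, and there is no imaginary part to worry about. You take $\eta = 1$, for which the alternative $\beta' \le 0$ is impossible, so the repulsion bound applies unconditionally, giving $1 \ge 1-\beta' \ge c_2''\,\log\bigl(c_1''/((1-\beta_1)\log d_L)\bigr)/\log d_L$. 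Setting $1-\beta_1 = d_L^{-c_3}$ and solving yields $c_3 \le 1/c_2'' + (\log \Lo_0 - \log c_1'')/\Lo_0$; with $c_2'' \approx 0.1$ and the Table~\ref{c1-c2-c3} data the worst case is about $11.69$.

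Second, your claim that Minkowski's bound ``confines $\tau^{n_L}$ to $d_L^{o(1)}$'' is not true. Minkowski only gives $n_L \le 2\Lo/\log 3$, so $n_L$ can be a constant multiple of $\Lo$, and then $\log(\tau^{n_L}) \ge n_L \log 2$ can be comparable to $\Lo$ itself; the resulting exponent $\log(d_L\tau^{n_L})/\Lo$ is bounded only by $1 + 2\log 2/\log 3 \approx 2.26$, which multiplies your $1/(2c_2) \approx 11.15$ up to roughly $25$ rather than $11.7$. Even using the $|t|\le 1$ version with $c_2'$ and $\log d_L$ does not close the numerical gap with a factor of $\tfrac12$, because $c_2' \approx 0.05$ while $c_2'' \approx 0.1$.

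Third, the route through the Riemann--von Mangoldt formula to produce a zero with $\beta_0 \ge 1/2$ is extraneous and, as you yourself observe, delicate when $n_L$ is comparable to $\Lo$. The paper sidesteps this entirely: because $\eta = 1$, any nontrivial zero $\beta' > 0$ suffices, so $1-\beta_1$ is available for free. Fiori's appendix certifies the least-prime bound for small $d_L$, not the existence of a second low-lying zero, so it cannot be used to fill that particular gap. In short, the missing ingredient is Theorem~\ref{dhall_real}: without the stronger real-zero repulsion constant and its $\tau$-free denominator, your bookkeeping lands well above $c_3 = 11.7$.
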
 
This improves \cite[Corollary 7.4]{AK} where $c_3 = 114.72...$ was proven admissible. 
We note that for $d_L$ sufficiently large, the bound above was established with $c_3=16.6$ in \cite[Corollary 1.4]{Z17} and $c_3=7.072$ in \cite[Corollary 1.3.1]{KNW}. 

At this point, we have all the key ingredients (also summarized in Section \ref{keys}), and we will complete the proof of Theorem \ref{mainthm} in Section \ref{leastprime}. Here is a qualitative description to illustrate how they impact the final result:
\begin{itemize}[leftmargin=15pt,labelindent=0pt,labelwidth=12pt,itemindent=8pt,topsep=0pt]
\item a refined case analysis (in particular, adding the ``very small'' case) 
allows us to divide Ahn and Kwon's constant of $12\,577$ by a factor of about $7$;
\item the choice of the weight 
and the strength of Deuring-Heilbronn phenomenon allow us to improve the result by a factor of about $3$;
\item finally, the numerical verifications divides the value of $B$ by 2 (going from $620$ without using Table \ref{Table-Fiori}, to the announced $310$). 
\end{itemize}

We let $\tilde{\pi}_{C}(x)$ denote the number of degree-one unramified primes $\mathfrak{p}$ of $K$  such that  $N \mathfrak{p} \le x$ and $\sigma_{\mathfrak{p}}=C$. 
In Section \ref{lower-bound} we apply Theorem \ref{upperbd} to obtain a lower bound for $\tilde{\pi}_{C}(x)$.
\begin{theorem}\label{lwr-bnd}
Let $L/K$ be a Galois extension of number fields with Galois group $G$, and let  $C$ be a conjugacy class in $G$.  If $L\neq \Bbb{Q}$, then for $x\ge \exp(d_L^{c_3})$, we have
$$
\tilde{\pi}_{C}(x)\ge m \frac{|C|}{|G|}\frac{x}{\log x},
$$
where $c_3 = 11.7$ and $m=  0.4899$.
\end{theorem}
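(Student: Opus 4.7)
The plan is to combine the standard explicit formula for the Chebyshev function of $\zeta_L(s)$ with the upper bound on the potential exceptional zero $\beta_1$ from Theorem~\ref{upperbd} and the Deuring--Heilbronn phenomenon from Theorem~\ref{thm-DH-all}. First, I would work with the weighted counting function restricted to degree-one primes,
\[
\theta_C(x) \;=\; \sum_{\substack{\mathfrak{p}\text{ unramified},\ \deg\mathfrak{p}=1\\ \sigma_{\mathfrak{p}}=C,\ N\mathfrak{p}\le x}} \log N\mathfrak{p},
\]
and, via the standard Chebotarev setup (character orthogonality on $G$, or equivalently inclusion--exclusion over fixed fields of cyclic subgroups), express $\theta_C(x)$ in terms of the Chebyshev function $\psi_L(x)=\sum_{N\mathfrak{n}\le x}\Lambda_L(\mathfrak{n})$ of $L$. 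The difference $\theta_C(x)-(|C|/|G|)\psi_L(x)$ is controlled by prime powers of degree $\ge 2$ and by ramified primes, both of which contribute $O(\sqrt{x}\log(xd_L))$.

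Next I would invoke the usual explicit formula
\[
\psi_L(x) \;=\; x - \frac{x^{\beta_1}}{\beta_1} - \sum_{\rho\ne\beta_1}\frac{x^{\rho}}{\rho} + O\bigl(\log^{2}(xd_L^{n_L})\bigr),
\]
and split the non-trivial zero sum into three pieces: (a)~the exceptional zero $\beta_1$ when it exists; (b)~non-exceptional zeros $\rho=\beta'+it$ with $|t|\le 1$; and (c)~zeros with $|t|>1$. The hypothesis $\log x\ge d_L^{c_3}$ together with $1-\beta_1\ge d_L^{-c_3}$ from Theorem~\ref{upperbd} yields
\[
x^{\beta_1}=x\exp\bigl(-(1-\beta_1)\log x\bigr)\le x/e,
\]
so (a) contributes at most $x/(e\beta_1)$. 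For (b), Theorem~\ref{thm-DH-all} (or Theorem~\ref{dhall_real} for real zeros) with $\eta=1$ pushes the real parts $\beta'$ far enough below $1$ that, combined with the zero-counting bound $N(T,\zeta_L)\ll T\log(d_L T^{n_L})+n_L\log T$, the total contribution is only a small fraction of $x$ once $\log x\ge d_L^{c_3}$. For (c), the explicit zero-free region recalled in the introduction gives $x^{\beta'}\le x\exp(-c\log x/\log d_L)$, which is negligible under the same hypothesis.

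Assembling these estimates yields a lower bound of the form
\[
\theta_C(x) \;\ge\; \frac{|C|}{|G|}\Bigl(1-\tfrac{1}{e}-\varepsilon\Bigr)\,x
\]
for a small explicit $\varepsilon$, and the inequality $\tilde\pi_C(x)\ge\theta_C(x)/\log x$ then gives
\[
\tilde\pi_C(x) \;\ge\; m\,\frac{|C|}{|G|}\,\frac{x}{\log x},
\]
with the numerical value $m=0.4899$ emerging once all the explicit constants from Theorems~\ref{thm-DH-all}, \ref{dhall_real}, and~\ref{upperbd}, together with those from the zero-free region, are substituted in.

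The main obstacle is the numerical optimization rather than the structural argument. The theoretical ceiling imposed by the exceptional zero alone is $1-1/e\approx 0.632$, and the gap down to $m=0.4899$ measures the cumulative cost of controlling the non-exceptional zeros and of passing from $\psi_L$ to the degree-one function~$\theta_C$. Making this gap small requires exploiting the Deuring--Heilbronn constants of Theorem~\ref{thm-DH-all} in full strength, together with the explicit zero-density estimate for $\zeta_L(s)$ and a careful treatment of the boundary zeros $\beta'+it$ with $|t|\le 1$, where the repulsion from $\beta_1$ is essential.
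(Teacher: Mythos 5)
Your proposal takes a genuinely different route from the paper's proof. The paper works entirely through the Mellin kernel
\[
k_a(s)=\frac{(x^s-1)(a^s-1)}{s^2\log a},\qquad a\in(1,2],
\]
whose inverse Mellin transform $\widehat{k_a}$ is a trapezoid weight supported on $(1,ax]$; it quotes \cite[Lemma 2.3]{AK2} for the upper bound $\mathcal{D}_C\le\tilde\pi_C(ax)\log x$ and then imports wholesale from \cite[pp.~304--306]{AK2} the lower bound $\frac{|G|}{|C|}\mathcal{D}_C\ge c_{43}(a)x$ together with the constants $c_{35},c_{39},c_{40},c_{41}$. The only new ingredient in this paper's proof is the improved exceptional-zero bound $c_3=11.7$ from Theorem~\ref{upperbd}, fed in as the parameter $x_0=3^{c_3}$. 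Your proposal instead rebuilds the argument from scratch via the explicit formula for $\psi_L(x)$ and the Chebyshev-type function $\theta_C(x)$, splitting the zero sum by hand. The central observation you isolate --- that $\log x\ge d_L^{c_3}$ together with $1-\beta_1\ge d_L^{-c_3}$ forces $(1-\beta_1)\log x\ge 1$, hence $x^{\beta_1}\le x/e$ --- is the same mechanism driving the paper's bound, and your structure is sound. Two caveats are worth noting. First, a hard cutoff $\mathbf{1}_{[1,x]}$ in the explicit formula requires truncation at height $T$ and control of the truncation error, an issue the kernel $k_a(s)$ (with its $|s|^{-2}$ decay, hence an absolutely convergent sum over zeros) avoids entirely; this is not fatal but needs explicit attention. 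Second, the constant $m=0.4899$ in the theorem is tied to the specific kernel $k_a(s)$ --- the paper's own remark observes that the value $0.49$ ``comes from the choice of the kernel'' and could be improved --- so your explicit-formula route would land on a different (possibly better, possibly worse) constant once all zero-density, zero-repulsion, and truncation losses are tallied; you should not expect the number $0.4899$ to emerge from your computation. Your approach buys self-containment and conceptual transparency; the paper's buys efficiency by recycling the heavy lifting already done in \cite{AK2}.
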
 
This improves \cite[Theorem 3]{AK2} where Ahn and Kwon obtained instead $c_3=114.72...$ and $m= 0.353$ for all $L\neq \Q$. 
We also note that  Thorner and  Zaman \cite[Theorem 3.1]{TZ17} showed that there are absolute constants $\kappa_2$ and $\kappa_3$ such that
$$
\tilde{\pi}_{C}(x)\gg \frac{1}{d_L^{\kappa_2}} \frac{|C|}{|G|}\Li(x)
$$
for $x\ge d_L^{\kappa_3}$ and $d_L$ sufficiently large. 
While our range for $x$ is more restricted, we are able to obtain a lower bound independent of $L$.

\subsection*{Notation}
We recall that $L$ is a number field of degree $n_L\ge n_0\ge 2$ and with absolute discriminant $d_L\ge d_0\ge 3$.
We denote
\begin{equation}\label{def-L}
\Lo=\log d_L \ \text{and}\ \Lo_0 = \log d_0.
\end{equation}
Together with Minkowski's bound, we consider 
\begin{equation}\label{assumptions}
n_0 \le n_L \le \frac{2\log d_L}{\log 3}, \ d_L \ge d_0, \ \text{and}\ \log d_L \ge \Lo_0 .
\end{equation}
Let $\tau=|t|+2$. We shall define 
\begin{equation}\label{def-delta}
\delta = \delta_L(\tau)=\frac{\log(\tau^{n_L})}{\log d_L}, 
\end{equation}
and use various bounds for $\delta $ depending on cases. For $T_0\ge0$, we define 
\begin{equation}\label{def-Delta}
\Delta_0(T_0) =  \mathscr{Q}_0 \log(T_0+ 2),
\end{equation}
where
\begin{equation}\label{def_Q}
\mathscr{Q}_0
 = \begin{cases}
 \frac{n_L}{\Lo_0 } & \textrm{ if $2\le n_L\le 20$;}\\
  \frac{1 }{\log 10} & \textrm{ if $ n_L \ge 21$.}
  \end{cases}
\end{equation}
Note that for $|t| \le T_0$, we have
\begin{equation}\label{bnd-delta}
0\le \delta \le \Delta_0(T_0).
\end{equation}

\section{Repelling non-exceptional zeros further left}\label{Z-F-R-D-H}
We recall \cite[Proposition 6.1]{AK}, which asserts that for any number field $L$, there are no zeros for $\zeta_L(s)$ in the region 
\begin{equation}  \label{zfr}
  \Re(s) \ge 1 - \frac{1}{29.57 \left( \Lo + n_L\log (|\Im (s)|+2) \right)},  
\end{equation}
with the exception of at most one real zero $\beta_1$. If the exceptional zero $\beta_1$ exists, then it has to be real and simple.
In addition, by \cite{Ka12} (for $d_L$ sufficiently large) and by \cite[Theorem 1]{AK14} (for all $L\neq \Bbb{Q}$), we know that $\zeta_L(s)$ has at most one zero  in the region
\begin{equation}\label{zfr-R0}
\beta> 1-\frac{1}{R_0\Lo} \text{ and } |\gamma|<\frac{1}{2\Lo},
\end{equation}
for any $R_0\ge 2$. In what follows we let $\beta_1$ denote the  exceptional zero of $\zeta_L(s)$ if it exists. We shall split our considerations into several cases depending on the location (and existence) of  $\beta_1$.

\subsection{Enlarging the region without non-exceptional zeros}\label{Z-F-R}
We assume that there exists $R_1\le R_0$ such that there is no non-trivial zeros $\rho \neq \beta_1$ of $\zeta_L(s)$ in the region 
\begin{equation}\label{zfr-enlarged-R1}
\beta> 1-\frac{1}{R_1\Lo} \text{ and } |\gamma|<\frac{1}{R_1\Lo}.
\end{equation}
We note that one can always take $R_1=2$, but show here that we can take 1.24 and 1.7 as  admissible values for $R_1$ for certain instances:
\begin{proposition}\label{lemma-zfr-enlarged}
Let $L$ be a number field such that $L\not=\Q$, and let $R_0\ge 2.$
If the exceptional zero $\beta_1$ of $\zeta_{L}(s)$ presents in $(1-\frac{1}{R_0\Lo},1)$, then there is no other zeros of $\zeta_L(s)$ in the region 
\begin{equation}\label{zfr-enlarged}
\beta> 1-\frac{1}{1.7\Lo} \text{ and } |\gamma|<\frac{1}{1.7\Lo}.
\end{equation}
Moreover, if $R_0\ge 3.5$, then $1.7$ in \eqref{zfr-enlarged} can be improved to $1.24$.
\end{proposition}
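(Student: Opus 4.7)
The plan is to proceed by contradiction using a quantitative positivity argument for $-\Re\,\zeta_L'/\zeta_L$, in the spirit of the classical proof of the zero-free region. Suppose, in addition to the exceptional zero $\beta_1\in(1-\tfrac{1}{R_0\Lo},1)$, that $\zeta_L(s)$ admits another non-trivial zero $\rho'=\beta'+i\gamma'$ with $\beta'>1-\tfrac{1}{R_1\Lo}$ and $|\gamma'|<\tfrac{1}{R_1\Lo}$, where $R_1\in\{1.7,1.24\}$; by complex conjugation $\bar\rho'$ is also a zero. Starting from the Hadamard-product expression for $-\Re\,\zeta_L'/\zeta_L(s)$ and retaining only the positive contributions of $\beta_1$, $\rho'$, and $\bar\rho'$ (the remaining non-trivial zeros contribute non-negatively for $\Re s>1$), evaluation at a real $s=\sigma>1$ yields
\begin{equation*}
\frac{1}{\sigma-\beta_1}+\frac{2(\sigma-\beta')}{(\sigma-\beta')^2+\gamma'^2}
\;\le\; -\Re\frac{\zeta_L'}{\zeta_L}(\sigma)+\frac{1}{\sigma-1}+\mathcal{E}(\sigma),
\end{equation*}
where $\mathcal{E}(\sigma)$ bundles the archimedean gamma-factor contribution together with the $\tfrac12\Lo$ term.

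Next I would invoke the explicit bound from \cite[Proposition~6.1]{AK} (the same input that yields the zero-free region \eqref{zfr}) to control the right-hand side by $\Lo$ times an explicit constant plus $\tfrac{1}{\sigma-1}$. Parametrizing $\sigma=1+\mu/\Lo$ with $\mu>0$ to be optimized, and substituting the assumed upper bounds on $1-\beta_1$, $1-\beta'$, and $|\gamma'|$, the inequality becomes
\begin{equation*}
\frac{1}{\mu+1/R_0}+\frac{2(\mu+1/R_1)}{(\mu+1/R_1)^2+1/R_1^2}
\;\le\;\frac{1}{\mu}+C_0,
\end{equation*}
for an explicit constant $C_0$ depending only on the constants entering \eqref{zfr}. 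If for some admissible $\mu$ the left-hand side exceeds the right-hand side, we reach a contradiction. A numerical optimization in $\mu$, given the hypothesis on $R_0$, shows this is the case precisely when $R_1\le 1.7$ in general, and when $R_1\le 1.24$ under the stronger assumption $R_0\ge 3.5$; the improvement from $1.7$ down to $1.24$ comes from the additional savings in the term $\tfrac{1}{\mu+1/R_0}$ once $\beta_1$ is known to be closer to $1$.

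The main obstacle will be the numerical tightness: to extract $R_1=1.7$ (let alone $R_1=1.24$) rather than the classical $R_1=2$, one has to treat $\mathcal{E}(\sigma)$ with the full explicit form of the archimedean Stirling expansion and carefully track how the optimal $\mu$ depends jointly on $R_0$ and $R_1$. All other ingredients---the partial-fraction identity, the complex-conjugate pair argument, and the positivity step---are routine, so the entire difficulty sits in the quantitative bookkeeping and in choosing $\mu$ so that the two savings (from $\beta_1$ and from $\rho'$) reinforce one another.
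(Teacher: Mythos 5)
Your conceptual picture is right---evaluate a positivity inequality for the logarithmic derivative at a real point $\sigma=1+\mu/\Lo$, retain the contributions of $\beta_1$ and the hypothetical extra zero, and derive a contradiction from the explicit bound on the right---but two of your specific ingredients do not carry the argument to $R_1=1.7$ (much less $1.24$), and the paper's proof relies on a sharper input that your sketch omits.

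First, your factor of $2$ from the conjugate pair $\{\rho',\bar\rho'\}$ is illegitimate exactly in the case that matters most. The region $\{\beta>1-\tfrac{1}{R_1\Lo},\ |\gamma|<\tfrac{1}{R_1\Lo}\}$ includes $\gamma=0$, and a simple \emph{real} non-exceptional zero $\beta'$ there contributes only $\frac{1}{\sigma-\beta'}$, not $\frac{2}{\sigma-\beta'}$. Dropping the factor of $2$ in your inequality gives, with the naive bound $\sum_\rho\Re\frac{1}{\sigma-\rho}\le\frac12\Lo+\frac{1}{\sigma-1}+\cdots$, a left-hand side of about $\bigl(\frac{rR_0}{R_0+r}+\frac{rR_1}{R_1+r}\bigr)\Lo\approx 0.905\,\Lo$ (at $r=0.6$, $R_0=2$, $R_1=1.7$), against a right-hand side of about $\bigl(\frac12+r\bigr)\Lo=1.1\,\Lo$ plus lower-order terms---no contradiction. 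So the naive classical positivity argument alone cannot reach $R_1=1.7$.

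What the paper actually uses is the Stechkin-refined version of this inequality, imported from \cite[Sec.~2]{AK14}: the coefficient of $\Lo$ on the right is $\frac12\bigl(1-\frac{1}{\sqrt5}\bigr)\approx 0.2764$, not $\frac12$. This refinement (a weighted two-point evaluation of the explicit formula, not the plain $\sigma>1$ bound) is precisely the savings needed to close the gap; with it, $0.8764\,\Lo<0.905\,\Lo$ and the contradiction follows with a single copy of $\rho'$, covering both real and complex $\rho'$. Your proposal also misattributes the required input: \cite[Proposition~6.1]{AK} is the zero-free-region \emph{statement}, not a bound on $-\Re\zeta_L'/\zeta_L(\sigma)$; the relevant quantitative tool is the inequality on \cite[pp.~438--439]{AK14}. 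Without the Stechkin improvement, the argument would need to be restricted to non-real $\rho'$ (where the factor of $2$ is legitimate) and a separate argument would still be required for the real case, so the proof as sketched has a genuine hole.
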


\begin{proof}[Proof of Proposition \ref{lemma-zfr-enlarged}]
 Suppose that the exceptional zero $\beta_1$ presents in $(1-\frac{1}{R_0\Lo},1)$. Let $\beta_2+i\gamma_2$ be another non-trivial zero of $\zeta_L(s)$ such that
\begin{equation*}
\beta_2> 1-\frac{1}{R\Lo} \text{ and } |\gamma_2|<\frac{1}{R\Lo}.
\end{equation*}
Following  \cite[Sec. 2]{AK14}, we assume $R\ge 1.24$ and set
$
\sigma=1+\frac{1}{r \Lo},
$
with $1<\sigma\le 6.2$, and $\frac{1}{5.2\log 3} \le r \le R$. 
As argued in  \cite[pp. 438--439]{AK14}, one has
$$
\Big(\frac{1}{2}\Big(1 -\frac{1}{\sqrt{5}}  \Big) +r  \Big) \Lo \ge \frac{\sigma - \beta_1}{(\sigma-\beta_1)^2} + \frac{\sigma - \beta_2}{(\sigma-\beta_2)^2 +\gamma_2^2} 
 = \frac{1}{\sigma-\beta_1} + \frac{\sigma - \beta_2}{(\sigma-\beta_2)^2 +\gamma_2^2},
$$
and thus
\begin{equation}\label{zfr-const}
\frac{1}{2}\Big(1 -\frac{1}{\sqrt{5}}  \Big) +r \ge \frac{rR_0 }{R_0+r } + \frac{r R(R+r)}{(R+r)^2 +r^2} .
\end{equation}
By a numerical calculation, if $R_0\ge 2$ and $r=0.6$, it can be checked that \eqref{zfr-const} fails for $R\ge 1.7$. Also, if $R_0\ge 3.5$ and $r=0.6$,  \eqref{zfr-const} fails for $R\ge 1.24$. This concludes the proof. 
\end{proof}

\subsection{Deuring-Heilbronn phenomenon for all number fields}\label{D-H}

In this section, we prove Theorems \ref{thm-DH-all} and \ref{dhall_real}, that is quantitative versions of the Deuring-Heilbronn phenomenon for all number fields. To do so, we employ the Tur\'an's power sum method as introduced by Lagarias, Montgomery, and Odlyzko in \cite{LMO} (see also Montgomery's \cite[Theorem 11]{Mon10}). We note that Ahn and Kwon employ \cite[Theorem 4.2]{LMO} to prove their version \cite[Theorem 7.3]{AK} of the Deuring-Heilbronn phenomenon for all number fields.  
The reader can also find a refinement of \cite[Theorem 4.2]{LMO} in Zaman's \cite[Theorem 2.3]{Z17}.
In \cite[Theorem 2.2]{KNW}, the authors, together with Ng, appeal to Harnack's inequality to bring some improvements to these results. The following is the case $s_j=\sum_{n\ge 1} b_n z_n^j$ with $b_n=1$ for all $n$.

\begin{theorem}
\label{newTuran}
Let $\varepsilon>0$ and $z_1\neq 0$.
Assume that for all $n\ge 1$, $z_n$ is a complex number satisfying $|z_n| \le |z_1|$. 
For any $j\in\Bbb N$, set $s_j=\sum_{n\ge 1}z_n^j$ and 
$
M=\sum_{n\ge 1}\frac{|z_n|}{|z_1|+|z_n|}$.
Then there exists $j_0$ with $1\le j_0\le (8+\varepsilon)M$ such that 
\[
\Re(s_{j_0})
\ge  \frac{ \varepsilon }{4(8+\varepsilon)} |z_1|^{j_0}.
\]
\end{theorem}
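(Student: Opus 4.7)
The plan is to adapt the Turán-style power sum method of Lagarias, Montgomery and Odlyzko (\cite[Theorem 4.2]{LMO}), as refined by Zaman (\cite[Theorem 2.3]{Z17}) and sharpened using Harnack's inequality by the authors and Ng in \cite[Theorem 2.2]{KNW}. The present statement is the specialisation $b_n \equiv 1$ of that last result, so the task reduces to specialising the argument and carefully tracking the constants $(8+\varepsilon)$ and $\frac{\varepsilon}{4(8+\varepsilon)}$.

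First, by dividing each $z_n$ through by $z_1$ we may normalise so that $z_1 = 1$ and $|z_n| \le 1$; this leaves $M$ invariant and transforms the conclusion into $\Re(s_{j_0}) \ge \lambda$ with $\lambda := \frac{\varepsilon}{4(8+\varepsilon)}$. We then introduce the generating function
\[
F(z) \;=\; \sum_{n\ge 1}\frac{z\,z_n}{1 - z\,z_n} \;=\; \sum_{j\ge 1} s_j\, z^j, \qquad |z|<1,
\]
whose $n=1$ summand has a simple pole at $z=1$ while every other summand (for which $|z_n|<1$) is holomorphic there, so that $\Re F(r) \to +\infty$ as $r \nearrow 1^-$.

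The heart of the proof is a contradiction argument. Suppose $\Re(s_j) < \lambda$ for every $1 \le j \le J := \lfloor (8+\varepsilon) M \rfloor$. Fix a radius $r \in (0,1)$ near $1$, for instance $r = 1 - 1/(J+1)$, and split
\[
\Re F(r) \;=\; \sum_{j=1}^{J} \Re(s_j)\, r^j \;+\; \Re\!\sum_{j>J} s_j\, r^j.
\]
The head is at most $\lambda\, r/(1-r)$ by hypothesis, while the tail is bounded in absolute value by $\sum_n (r|z_n|)^{J+1}/(1-r|z_n|)$ via $|s_j| \le \sum_n |z_n|^j$. The crucial step is to apply Harnack's inequality to the positive-real-part functions $(1+zz_n)/(1-zz_n)$ on the unit disk, extracting from each summand the factor $|z_n|/(1+|z_n|)$ and turning the tail into a controlled multiple of $M/(1-r)$. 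On the other hand, isolating the $n=1$ term of $F(r)$ itself gives $\Re F(r) \ge r/(1-r) - O(M)$. As $r \to 1^-$ the quantity $1/(1-r)$ scales like $J$, and the two estimates collide unless $\lambda \ge \varepsilon/(4(8+\varepsilon))$, yielding the desired contradiction.

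The main technical obstacle will be the Harnack calibration: one must arrange that the tail sum reproduces the precise quantity $M = \sum_n |z_n|/(|z_1|+|z_n|)$, rather than the much larger $\sum_n |z_n|/(1-|z_n|)$ supplied by the naive geometric bound, since it is the former that pins down the range $(8+\varepsilon)M$ together with the explicit constant $\frac{\varepsilon}{4(8+\varepsilon)}$. Once that refined bound is secured, the remaining optimisation over $r$ is a delicate but elementary exercise.
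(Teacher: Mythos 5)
You correctly identify that this theorem is nothing more than the specialisation $b_n \equiv 1$ of \cite[Theorem~2.2]{KNW}, and that is exactly what the paper does: the line preceding the statement ("The following is the case $s_j=\sum_{n\ge 1} b_n z_n^j$ with $b_n=1$ for all $n$") is the entire proof. Had you stopped there the answer would be unimpeachable. The sketch you then offer of the underlying argument, however, contains two genuine gaps.

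First, the normalisation is wrong for general $z_1$. The theorem allows $z_1$ to be any nonzero complex number, and dividing every $z_n$ by $z_1$ multiplies $s_j$ by $z_1^{-j}$, which is a rotation; $\Re(s_{j_0} z_1^{-j_0}) \ge \lambda$ does \emph{not} imply $\Re(s_{j_0}) \ge \lambda\,|z_1|^{j_0}$. What one can do harmlessly is divide by $|z_1|$, but then the normalised leading term is $e^{i\arg z_1}$, not $1$, and the subsequent claims — that the $n=1$ summand of $F(z)$ has a pole at $z=1$, and that isolating it gives $\Re F(r) \ge r/(1-r) - O(M)$ — are only valid when $z_1$ is positive real. (This happens to be the case in the application inside Theorem~\ref{thm-DH-all}, but it is not the case covered by the statement you are proving.)

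Second, even granting $z_1>0$, the generating-function contradiction does not close. You flag the tail estimate as "the main technical obstacle" but do not resolve it, and in fact it cannot be resolved in the way you describe. The sharpest version of the geometric bound gives $\sum_{n}\frac{(r|z_n|)^{J+1}}{1-r|z_n|}\le \frac{2r^{J}}{1-r}\,M$; plugging this in, together with the Harnack lower bound, the required inequality after clearing $(1-r)/r$ reduces to
\[
1-\lambda - r^{J} \;>\; \bigl(M-\tfrac12\bigr)\bigl(1-r+2r^{J}\bigr),
\]
and the right-hand side, minimised over $r$ with $J=(8+\varepsilon)M$, grows like $\frac{1}{8+\varepsilon}\log M$ as $M\to\infty$, whereas the left-hand side is bounded by $1$. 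So the contradiction fails once $M$ is moderately large (already around $M\approx 10^{3}$ with $\varepsilon=1$ in a direct computation). This shows that the route you sketch — head/tail split of $\sum_j s_j r^j$ on a single radius $r$ — cannot deliver the range $(8+\varepsilon)M$ with the constant $\varepsilon/(4(8+\varepsilon))$; the Harnack input must instead be fed into a Tur\'an-type weighted sum $\sum_{j\le J} c_j \Re(s_j)$ (or, equivalently, into bounds for $\Re P(z_n)$ for a judiciously chosen positive-real-part polynomial $P$ of degree $J$), as is done in \cite{LMO} and \cite{KNW}. In short: the reduction to \cite[Theorem~2.2]{KNW} is correct and is the paper's proof, but the proof sketch you append would not prove that cited result.
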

We propose here a bound for $\Re \frac{\Gamma'}{\Gamma} $ which is the last tool we need to prove Theorem \ref{thm-DH-all}.
It improves \cite[Lemma 5.3]{AK} and will allow for sharper estimates for $\Re \frac{\gamma_L'}{\gamma_L}$ than \cite[Lemma 5.4]{AK}, where $\gamma_L$ is the associated gamma factor of $\zeta
_L(s)$.

\begin{lemma}\label{bnd-gammas}
Let $T_0\ge 0$. For  $s=\sigma +it$ with $\sigma>0$ and  $|t|\le T_0$,
we define
\begin{equation} \label{def-g}
g(\s,T_0) =  \max_{|t|\le T_0}\Big( \log\Big(\frac{\sqrt{\sigma^2 +t^2 }}{|t|+2} \Big) - \frac{\sigma}{ \sigma^2 +t^2}\Big)    +\frac{1}{3 \sigma^2 } -\log 2,
\end{equation}
Then
\begin{equation}\label{bnd-Gamma}
\Re \frac{\Gamma'}{\Gamma}\Big(\frac{s}{2}\Big) \le  \log (|t|+2) + g(\s,T_0).
\end{equation}
\end{lemma}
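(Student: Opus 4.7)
The plan is to apply Binet's second formula for the digamma function $\psi(z)=\Gamma'(z)/\Gamma(z)$, valid in the half-plane $\Re(z)>0$:
\[
\psi(z) = \log z - \frac{1}{2z} - 2\int_0^\infty \frac{u\,du}{(z^2+u^2)(e^{2\pi u}-1)}.
\]
Setting $z=s/2$ with $s=\sigma+it$ and $\sigma>0$, I take real parts and separate the right-hand side into three pieces. The first piece is $\Re\log(s/2)=\log\sqrt{\sigma^2+t^2}-\log 2$, which after adding and subtracting $\log(|t|+2)$ will supply the main contribution together with the first summand of $g(\sigma,T_0)$. The second piece is $-\Re(1/s)=-\sigma/(\sigma^2+t^2)$, which matches the remaining $t$-dependent summand in the definition \eqref{def-g} of $g$. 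All that remains is to bound the integral remainder by $1/(6\sigma^2)$, so that doubling yields the term $1/(3\sigma^2)$ appearing in $g$.

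For the integral, I would factor $u^2+s^2/4=(u-is/2)(u+is/2)$. Writing $is/2=-t/2+i\sigma/2$, one checks that
\[
|u\pm is/2|^2 = (u\mp t/2)^2 + \sigma^2/4 \ge \sigma^2/4\qquad (u\ge 0),
\]
so $|u^2+s^2/4|\ge \sigma^2/4$ uniformly in $t$. The elementary evaluation
\[
\int_0^\infty \frac{u\,du}{e^{2\pi u}-1} = \frac{\Gamma(2)\zeta(2)}{(2\pi)^2} = \frac{1}{24}
\]
then gives
\[
\left|2\int_0^\infty \frac{u\,du}{(u^2+s^2/4)(e^{2\pi u}-1)}\right|
\le \frac{8}{\sigma^2}\cdot\frac{1}{24} = \frac{1}{3\sigma^2}.
\]

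Assembling the three pieces yields
\[
\Re\,\psi(s/2)\le \log\sqrt{\sigma^2+t^2}-\log 2-\frac{\sigma}{\sigma^2+t^2}+\frac{1}{3\sigma^2}.
\]
Adding and subtracting $\log(|t|+2)$ rewrites this as
\[
\Re\,\psi(s/2)\le \log(|t|+2) + \left(\log\frac{\sqrt{\sigma^2+t^2}}{|t|+2} - \frac{\sigma}{\sigma^2+t^2}\right) - \log 2 + \frac{1}{3\sigma^2},
\]
and bounding the parenthesized $t$-dependent quantity by its maximum over $|t|\le T_0$ produces exactly $\log(|t|+2)+g(\sigma,T_0)$, as required.

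There is no substantial obstacle: the argument is a routine invocation of Binet's formula combined with an explicit tail bound. The only point that requires care is arranging the estimate so that the integral bound is uniform in $t$, which is what the factorisation $u^2+s^2/4=(u-is/2)(u+is/2)$ achieves; this uniformity is what allows the $t$-dependent terms to be absorbed into the single maximum defining $g(\sigma,T_0)$.
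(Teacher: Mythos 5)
Your proof is correct and follows essentially the same route as the paper: both reduce to the pointwise estimate $\Re\frac{\Gamma'}{\Gamma}(s)\le \log|s|-\frac{\sigma}{2(\sigma^2+t^2)}+\frac{1}{12\sigma^2}$ evaluated at $s/2$, then insert $\pm\log(|t|+2)$ and pass to the maximum over $|t|\le T_0$. The only difference is that the paper cites this auxiliary inequality from \cite[p.~1429]{AK}, whereas you re-derive it from Binet's second formula for the digamma function, with the integral tail bounded using $|u^2+s^2/4|\ge\sigma^2/4$ and $\int_0^\infty u\,\d u/(e^{2\pi u}-1)=1/24$.
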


\begin{proof}
According to \cite[p. 1429]{AK}, 
\begin{equation}\label{bd-Gamma}
\Re \frac{\Gamma'}{\Gamma}(s)
 \le  \log |s| - \frac{\sigma}{2 (\sigma^2 +t^2)}  +\frac{1}{12 \sigma^2 }. 
\end{equation}
Thus, 
\[
\Re \frac{\Gamma'}{\Gamma}\Big(\frac{s}{2}\Big)
\le  \log (|t|+2) + \max_{|t|\le T_0}\Big( \log\Big(\frac{\sqrt{\sigma^2 +t^2 }}{|t|+2} \Big) - \frac{\sigma}{ \sigma^2 +t^2}\Big)    +\frac{1}{3 \sigma^2 } -\log 2
\]
from which \eqref{bnd-Gamma} follows. 
\end{proof}
Now, we are in a position to prove Theorems \ref{thm-DH-all} and \ref{dhall_real}.
For both proofs, we are assuming \eqref{assumptions} for $n_L$ and $d_L$, and we introduce $\varepsilon >0, \sigma\ge2$, and $\eta\in (0,1]$.

\begin{proof}[Proof of Theorem \ref{thm-DH-all}]
We denote $\beta_1$ the exceptional real zero that we assume $\zeta_L(s)$ possesses. We let $\mathcal{S}$ be the set of all non-trivial zeros of $\zeta_L(s)$. 
Let $\eta \in (0,1]$ and $T_0\ge0$. We also denote $\beta'+it$ a non-trivial zero of $\zeta_L(s)$ such that $1-\eta \le \beta'<1$ and $|t|  \le T_0$. 
We shall let $\sigma\ge 2$ and let $z_n$ run over $(\sigma-\rho )^{-2}$ and $(\sigma+it-\rho )^{-2}$ for all $\rho \in\mathcal{S}\backslash\{\beta_1\}$ in order that $|z_n|$ decreases (so $|z_1|\ge|z_2|\ge\cdots$). 
Thus, $ |z_1| \ge \frac{1}{(\sigma-\beta')^{2}}$, and by the inequality $\log(1+x) \le x$, we have 
\begin{equation}\label{bnd-z1}
 |z_1| \ge  \frac{1}{(\sigma-1)^{2}} \frac{(\sigma-1)^{2}}{(\sigma-\beta')^{2}}
 = \frac{1}{(\sigma-1)^{2}}  \exp \Big( -2 \log \Big(\frac{\sigma-\beta'}{\sigma-1}\Big) \Big)\ge 
 \frac{1}{(\sigma-1)^{2}}  \exp \Big( -2  \frac{1-\beta'}{\sigma-1} \Big).
\end{equation}
From \cite[Eq. (7.2) and Lemma 7.1]{AK}, it follows that
\begin{equation}\label{sum-zn}
\Re \sum_{n\ge 1} z_n^{j_0} \le  \frac{4j_0(1-\beta_1)}{(\s-1)^{2j_0+1}}. 
\end{equation}
Applying Theorem \ref{newTuran} for $\varepsilon>0$, there exists $j_0$ with $1\le j_0\le (8+\varepsilon)M$ such that 
\begin{equation}\label{newTuran'}
\Re\sum_{n\ge 1}z_n^{j_0}
\ge  \frac{ \varepsilon }{4(8+\varepsilon)} |z_1|^{j_0}.
\end{equation}
Combining \eqref{bnd-z1}, \eqref{sum-zn}, \eqref{newTuran'}, and the fact $j_0 \le (8+\varepsilon) M$ gives
\begin{equation}\label{bnd-Turan'}
1-\beta' \ge  \frac{\sigma-1}{2(8+\varepsilon) M }\log \Big(  \frac{ \varepsilon  (\s-1)}{16M(8+\varepsilon)^2 (1-\beta_1)} \Big).
\end{equation}
We finalise by appealing to \cite[p. 2294]{KNW} to establish an upper bound for $M$:
\begin{equation}\label{bnd-M} 
M \le  \mathcal{A} S_L(d-1,t),
\end{equation}
where $\mathcal{A}$, $d$, and $S_L(d-1,t)$ are respectively defined in \cite[Eq. (2.6), (2.11), and (2.12)]{KNW}:
\begin{align}
& \label{def-A} 
\mathcal{A}=\mathcal{A}(\s,\eta)= (\s-1+\eta)^2,
\\ &\label{def-d} 
d =d(\s,\eta) = \sqrt{\s^2+\mathcal{A}} =  \sqrt{2\s^2+(1-\eta)^2-2\sigma(1-\eta)} ,
\\& \label{def-SL} 
 S_L(d-1,t) = \sum_{\rho} \Big( \frac{1}{|d-\rho|^2} + \frac{1}{|d+it-\rho|^2} \Big).
\end{align}
Also, \cite[Lemma 2.5 and Eq. (2.10)]{Z17} provides a bound for $ S_L$:
\begin{equation}\label{bnd-SL}
S_L(d-1,t) \le \frac{\log d_L}{d-1} + \frac{G_1(d-1;|t|)}{d-1} r_1+ \frac{G_2(d-1;|t|)}{d-1} (2r_2) + \frac{2}{(d-1)^2}+ \frac{2}{d(d-1)},
\end{equation}
where $r_1$ and $r_2$ are the numbers of real and complex places, respectively, of $L$,  and 
\begin{equation}\label{expression-G1+G2}
 \begin{split}
G_1(d-1;|t|)  r_1+  G_2(d-1;|t|) (2r_2) 
=& \frac{r_1+r_2}2 \frac{\Gamma'}{\Gamma} \Big( \frac{d}{2}\Big) 
+ \frac{r_1+r_2}2  \Re \frac{\Gamma'}{\Gamma} \Big( \frac{d+i|t| }{2}\Big)\\
& +\frac{r_2}2 \frac{\Gamma'}{\Gamma} \Big( \frac{d+1}{2}\Big) 
 + \frac{r_2}2 \Re \frac{\Gamma'}{\Gamma} \Big( \frac{d+1+i|t| }{2}\Big) 
 - n_L \log \pi .
  \end{split}
\end{equation}
Together with the bounds on Gamma from Lemma \ref{bnd-gammas} and the fact that $ \frac{\Gamma'}{\Gamma}(x) $ increases with real values $x$, we obtain
\begin{equation}\label{bd-G1+G2}
 r_1 G_1(d-1;|t|) + 2r_2 G_2(d-1;|t|) 
\le 
 \frac{\log (\tau^{n_L})}2  
+ \frac{ n_L}2 \Big( \frac{\Gamma'}{\Gamma} \Big( \frac{d+1}{2}\Big)  + \max_{x=d,d+1} g(x,T_0) -2 \log \pi \Big).
\end{equation}
We observe that, for $\delta$ defined in \eqref{def-delta}, whenever $\log d_L\ge \Lo_0$,
\begin{align*} 
& \log d_L  = \frac{1}{1+\delta} \log \Big( d_L \tau^{n_L} \Big) , \quad
\log(\tau^{n_L}) =  \frac{\delta}{1+\delta}  \log (d_L \tau^{n_L}) ,
\\
& n_L  \le \frac{\delta}{1+\delta} \frac{ 1}{\log 2} \log (d_L \tau^{n_L}) , \quad
1 \le  \frac{1}{(1+\delta)}\frac{1}{\Lo_0}\log \Big( d_L \tau^{n_L} \Big).
\end{align*}
We combine these  with \eqref{bd-G1+G2} so that \eqref{bnd-SL} becomes
\begin{equation}
S_L(d-1,t) 
\le \frac{\log (d_L \tau^{n_L})}{d-1} \Big(
 \frac{b_1(d)+b_2(d,T_0)\delta }{1+\delta} 
 \Big),
\end{equation}
where we denote
\begin{align}
& \label{def-b1}
b_1(d) =  1+ \frac{2}{\Lo_0(d-1)}+ \frac{2}{\Lo_0d} ,
\\ & \label{def-b2}
b_2(d,T_0) =  \frac{1}{2} 
+ \frac1{2\log2} \max\Big\{  \frac{\Gamma'}{\Gamma} \Big( \frac{d+1}{2}\Big)  +  \max\{g(d,T_0),g(d+1,T_0) \} - 2\log \pi, 0   \Big\}.
\end{align}
Now, setting
\begin{equation}
\label{def-mathcalB}
\mathcal{B} =\mathcal{B}(d,T_0)  =
\frac{1}{d-1} \max_{\delta\in [0,\Delta_0(T_0)]} \Big( \frac{b_1(d)+b_2(d,T_0)\delta }{1+\delta}  \Big),
\end{equation}
we see that \eqref{bnd-M} becomes
$M
\le \mathcal{A} \mathcal{B}\log (d_L \tau^{n_L}).
$
From \eqref{bnd-Turan'} and \eqref{bnd-M}, it follows that
\[
1-\beta'
\ge  \frac{\sigma-1}{2 (8+\varepsilon) \mathcal{A} \mathcal{B} }
\frac{\log \Big(  \frac{ \varepsilon (\s-1)}{16(8+\varepsilon)^2  \mathcal{A} \mathcal{B}}\frac1{ (1-\beta_1) \log (d_L \tau^{n_L})}\Big)}{\log (d_L \tau^{n_L})}
=c_2 \frac{\log\Big( \frac{c_1}{(1-\beta_1)\log (d_L \tau^{n_L})} \Big)}{\log (d_L \tau^{n_L})},
\]
with
\begin{equation}
 \label{def-c7-c8}
c_1
=  \frac{ \varepsilon }{8(8+\varepsilon)}c_2
\ \text{ and }\  
c_2
=\frac{\sigma-1}{2(8+\varepsilon)\mathcal{A}\mathcal{B}}. 
\end{equation}
Now, taking $T_0=\infty$, we establish the first part of Theorems \ref{thm-DH-all}. 
We note that one may bound $g(\s,\infty)$, trivially, by
$$
g(\s,\infty) \le  \frac{1}{2} \log\Big(\frac{\sigma^2}{4} +1 \Big) +\frac{1}{3 \sigma^2 } -\log 2.
$$
We will use this bound to control $\mathcal{B}(d,\infty)$ and calculate $c_1$ and $c_2$ for the theorem.

On the other hand, for $|t|\le 1$ (so for $T_0=1$), we may directly use \eqref{bd-Gamma}
to deduce
\begin{equation}\label{bnd-gammas-t<1}
\max_{x=d,d+1}\Re \frac{\Gamma'}{\Gamma}\Big( \frac{x+it }{2}\Big)
\le  \max_{x=d,d+1} \Big(\log \sqrt{x^2 +1}  - \frac{x}{x^2 + 1}  +\frac{1}{3 x^2 }\Big)-  \log 2.
\end{equation}
Hence, by \eqref{expression-G1+G2}, we have
$
 G_1(d-1;t)  r_1+  G_2(d-1;t) (2r_2) 
 \le n_L \mathcal{H}(d) ,
$
where we denote
\begin{equation}\label{def-H}
\mathcal{H}(d)= \frac{1}{2} \frac{\Gamma'}{\Gamma} \Big( \frac{d+1}{2}\Big) + \frac{1}{2} \max_{x=d,d+1} \Big(\log \sqrt{x^2 +1} - \frac{x}{x^2 + 1}  +\frac{1}{3 x^2 }\Big) -   \frac{\log 2 + 2 \log \pi}{2},
\end{equation}
Thus, whenever $n_L\ge n_0$ and $\frac{n_L}{\log d_L}\le \mathscr{Q}_0$, we obtain
\begin{align*}
S_L(d-1,t) 
\le & \frac{1}{d-1}  \Big( \log d_L + n_L \mathcal{H}(d)  + \frac{4d-2}{d(d-1)} \Big) \\
\le & \frac{\log d_L}{d-1}  \Big( 1 +  \mathscr{Q}_0 \max\Big\{ \mathcal{H}(d) + \frac{4d-2}{d(d-1)n_0 }, 0 \Big\}\Big),
\end{align*}
with $\mathscr{Q}_0$ as defined in \eqref{def_Q}. 
Now, setting
\begin{equation}
\label{def-mathcalB<1}
\mathcal{B}'=\mathcal{B}'(d)  = \frac{ 1 +  \mathscr{Q}_0\max\Big\{0, \mathcal{H}(d) + \frac{4d-2}{d(d-1)n_0 } \Big\} }{d-1},
\end{equation}
\eqref{bnd-M} gives
$
M  \le \mathcal{A} \mathcal{B}' \log d_L.
$
Applying \eqref{bnd-Turan'}, we arrive at
$
1-\beta' 
\ge c_2^{\prime} \frac{\log\Big( \frac{c_1^{\prime}}{(1-\beta_1)\log d_L} \Big)}{\log d_L},
$
where 
\begin{equation}\label{def-C7-C8}
c_1^{\prime}= c_1^{\prime}(\varepsilon,\sigma,\eta)= \frac{ \varepsilon }{8(8+\varepsilon)}c_2^{\prime}\ \text{ and }\ 
c_2^{\prime}=c_2^{\prime}(\varepsilon,\sigma,\eta) = \frac{\sigma-1}{2(8+\varepsilon)\mathcal{A}\mathcal{B}'} . 
\end{equation}
This completes the proof of Theorem \ref{thm-DH-all}
\end{proof}
We shall now prove the version of the Deuring-Heilbronn phenomenon that \emph{only} concerns the location of real zeros of $\zeta_L(s)$. We adapt the previous proof to the case $t = 0$.
\begin{proof}[Proof of Theorem \ref{dhall_real}]
The proof repeats most of the argument from the previous proof, so we shall keep using the notation with $t = 0$, and with the sequence 
$z_n$ running over $(\sigma-\rho )^{-2}$ for all the non-trivial zeros $\rho \in \mathcal{S}\backslash\{\beta_1\}$, with $z_n$ satisfying $|z_1|\ge|z_2|\ge\cdots$. 
It follows from \cite[Proof of Lemma 2.7]{KNW} that $2 M\le \mathcal{A} S_L(d-1,0)$. We observe that
\[
\frac{S_L(d-1,0)}{2} 
\le \frac{1}{d-1}  \Big( \frac{\log d_L}{2} + \frac{n_L}{2}\Big( \frac{\Gamma'}{\Gamma} \Big( \frac{d+1}{2}\Big) - \log \pi \Big)   + \frac{2d-1}{d(d-1)} \Big) .
\]
Thus, setting
\begin{equation}\label{def-mathcalB''}
\mathcal{B}''=\mathcal{B}''(d) 
=\frac{1}{d-1}\Big( \frac{1}{2}  + \mathscr{Q}_0\max \Big\{ 0, \frac{1}{2} \frac{\Gamma'}{\Gamma} \Big(\frac{d+1}{2}\Big) - \frac{\log \pi}2 +\frac{2d-1}{d(d-1)n_0} \Big\} \Big) ,
\end{equation}
we have 
$
M
\le \mathcal{A}\frac{S_L(d-1,0)}{2} 
 \le \mathcal{A}  \mathcal{B}'' (\log d_L)
$
whenever $n_L\ge n_0$, and $\frac{n_L}{\log d_L}\le \mathscr{Q}_0$. 
Hence, arguing as before, we have
$
1-\beta'\ge c_2^{\prime\prime} \frac{\log\Big( \frac{c_1^{\prime\prime}}{(1-\beta_1)\log d_L } \Big)}{\log d_L},
$
where $c_1^{\prime\prime},c_2^{\prime\prime}$ depend on $(8+\varepsilon),\sigma,\eta$ and are given by
\begin{equation}\label{def-c7'-c8'}
c_1^{\prime\prime}=  \frac{ \varepsilon }{8(8+\varepsilon)}c_2^{\prime\prime}\ \text{ and }\ c_2^{\prime\prime}=\frac{\sigma-1}{2(8+\varepsilon)\mathcal{A}\mathcal{B}''}. 
\end{equation}
This completes the proof of Theorem \ref{dhall_real}.
\end{proof}

\subsection{Distance of the exceptional zero to 1-line}\label{real-zeros}
With Theorem \ref{dhall_real} in hand, we shall prove Theorem \ref{upperbd} in this section.  
\begin{proof}[Proof of Theorem \ref{upperbd}]
We let $\eta\in(0,1]$ be a parameter to be chosen later. 
We consider $\beta$ a non-exceptional real zero of $\zeta_L(s)$. 
We assume $\beta\ge 1-\eta$, and we consider $c_3$ such that
$$
1-\beta_1= d_L^{-c_3}.
$$
By Theorem \ref{dhall_real}, whenever $\Lo\ge\Lo_0$, we have
$$
\eta\ge 1-\beta \ge 
c_2^{\prime\prime}  \frac{ \log c_1^{\prime\prime} - \log (1-\beta_1)  -\log \log d_L }{\log d_L }
\ge  c_2^{\prime\prime}  \frac{ \log c_1^{\prime\prime} +c_3\Lo-\log \Lo}{\Lo},
$$
where $c_1^{\prime\prime}$ and $c_2^{\prime\prime}$ are defined in \eqref{def-c7'-c8'}.
Thus,
\begin{equation}\label{def-c10}
c_3
\le  \frac{\eta}{c_2^{\prime\prime}} +  \frac{ \log \Lo -\log c_1^{\prime\prime} }{\Lo}
 \le  \frac{\eta}{c_2^{\prime\prime}} +  \frac{ \log \Lo_0 -\log c_1^{\prime\prime} }{\Lo_0},
\end{equation}
which completes the proof of Theorem \ref{upperbd}.
\end{proof}

\begin{remark}
We note that if there are only two real zeros (i.e $\beta_1$ and $1-\beta_1$), then we can only take the trivial bound $\beta=1-\beta_1>0$ (i.e. $\eta=1$). However, if there is a third real zero $\beta'$, by the symmetry of zeros of $\zeta_L(s)$, we may take $\beta=\beta'\ge 1/2$ (so $\eta= 1/2$ is admissible), which reduces the admissible value for $c_3$ and, consequently, would push  $\beta_1$ away from  the 1-line a bit further. In other words, while Deuring-Heilbronn describes how an exceptional real zero would push further left other zeros in its vicinity, the other real zeros also do the same to the exceptional zero. We note that this phenomenon would hold for Dirichlet $L$-functions as well. 
\end{remark}

\subsection{Numerical results}

As Theorems \ref{thm-DH-all}, \ref{dhall_real}, and \ref{upperbd} are ``trivial'' if  $\zeta_L(s)$ does not admit the exceptional zero $\beta_1$, it is sufficient to calculate $c_1,c_2,c_1^{\prime},c_2^{\prime},c_1^{\prime\prime},c_2^{\prime\prime},c_3$ for $L$ such that $\zeta_L(s)$ may admit the exceptional zero $\beta_1$. To do so, we first summarize some cases that the non-existence of $\beta_1$  is known.

For $n_L=2$, we recall that there is a (quadratic) Dirichlet character $\chi_{L}$ modulo $d_L$ such that
$
\zeta_L(s)=\zeta(s)L(s,\chi_{L}),
$
where $\zeta(s)$ is the Riemann zeta function, and $L(s,\chi_{L})$ is the Dirichlet $L$-function attached to $\chi_{L}$. It is known that $\zeta(\sigma)$ is non-vanishing for $\sigma>0$. Moreover, Platt \cite{Pla16} showed that for any Dirichlet character $\chi$ modulo $q$, with $q\le 400\,000$,  $L(s,\chi)$, the Dirichlet $L$-function attached to $\chi$, has no positive real zeros. Hence, for $n_L=2$, $\zeta_L(s)$ has no positive real zeros if $d_L\le 400\,000$. 

For $n_L=3$ and $d_L\le 239$, Tollis \cite{T0l97} verified the generalized Riemann hypothesis for $\zeta_L(s)$ to $|\Im(s)|\le 92$. Also, for   $n_L=4$ and $d_L\le 320$, Tollis \cite{T0l97} verified the generalized Riemann hypothesis for $\zeta_L(s)$ to $|\Im(s)|\le 40$. Consequently, $\zeta_L(s)$  has no positive real zeros whenever $n_L=3$ and $d_L\le 239$ or $n_L=4$ and $d_L\le 320$. Hence, for $2\le n_L\le 4$, we only need to compute $c_1,c_2,c_1^{\prime},c_2^{\prime},c_1^{\prime\prime},c_2^{\prime\prime},c_3$ for $L$ with $d_L\ge d_0$, where $d_0$ given in  Table \ref{c1-c2-c3}.

For fields $L$ of degree $n_L\ge 5$, instead of trying to confirm  the non-existence of the exceptional zero of $\zeta_L(s)$, we use the following lower bounds $d_0$ of $d_L$ to calculate $c_1,c_2,c_1^{\prime},c_2^{\prime},c_1^{\prime\prime},c_2^{\prime\prime},c_3$. For $5\le n_L \le 8$, by \cite{LMFDB}, it can be checked that all the fields $L$ satisfy that $d_L\ge d_0$, where $d_0$ is given in  Table \ref{c1-c2-c3}.  For $9\le n_L \le 20$, as remarked by Fiori, by the lower bounds for the minimal root discriminant  in  \cite{DiazMinkowskiDiscriminantBounds}, we know that $d_L\ge d_0$, where $d_0$ is given in  Table \ref{c1-c2-c3}. In addition, for  $n_L \ge 21$, by \cite{DiazMinkowskiDiscriminantBounds}, one has
$
d_L> 10^{n_L},
$  
and thus $d_0$ can be taken as $d_0=10^{n_L}$ for these fields. 
\begin{center}
\begin{table}[h] 
\caption{Values for $(c_1,c_2,c_1^{\prime},c_2^{\prime},c_1^{\prime\prime},c_2^{\prime\prime},c_3)$ as defined in Theorems \ref{thm-DH-all}, \ref{dhall_real}, and \ref{upperbd}}
\label{c1-c2-c3}
\begin{tabular}{|c|c|c|c|c|c|c|c|c|} 
 \hline
 $n_0$ & $d_0 $ & $c_1$ & $c_2$ & $c_1^{\prime}$ & $c_2^{\prime}$ & $c_1^{\prime\prime}$ & $c_2^{\prime\prime}$ & $c_3$   \\ 
 \hline
 $ 2 $ & $ 400\,000 $ & $ 5.174 \cdot 10^{-5} $ & $ 0.04233 $ & $ 7.904 \cdot 10^{-5} $ & $ 0.06466 $ & $ 1.581 \cdot 10^{-4} $ & $ 0.1293 $ & $ 8.608 $  \\
$ 3 $ & $ 239$ & $ 1.276 \cdot 10^{-4}$ & $ 0.05697$ & $ 1.133 \cdot 10^{-4}$ & $ 0.05059$ & $ 2.275 \cdot 10^{-4}$ & $ 0.1015$ & $ 11.69 $ \\
$ 4$ & $ 320$ & $ 1.047 \cdot 10^{-4}$ & $ 0.04974$ & $ 1.052 \cdot 10^{-4}$ & $ 0.05000$ & $ 2.121 \cdot 10^{-4}$ & $ 0.1008$ & $ 11.69 $ \\
$ 5$ & $ 1\,609$ & $ 8.485 \cdot 10^{-5}$ & $ 0.04960$ & $ 8.790 \cdot 10^{-5}$ & $ 0.05139$ & $ 1.774 \cdot 10^{-4}$ & $ 0.1037$ & $ 11.08 $ \\
$ 6$ & $ 9\,747$ & $ 7.056 \cdot 10^{-5}$ & $ 0.05019$ & $ 7.371 \cdot 10^{-5}$ & $ 0.05243$ & $ 1.488 \cdot 10^{-4}$ & $ 0.1058$ & $ 10.65 $ \\
$ 7$ & $ 184\,607$ & $ 5.852 \cdot 10^{-5}$ & $ 0.05397$ & $ 5.802 \cdot 10^{-5}$ & $ 0.05351$ & $ 1.168 \cdot 10^{-4}$ & $ 0.1077$ & $ 10.23 $ \\
$ 8$ & $ 1\,257\,728$ & $ 5.118 \cdot 10^{-5}$ & $ 0.05411$ & $ 5.105 \cdot 10^{-5}$ & $ 0.05397$ & $ 1.028 \cdot 10^{-4}$ & $ 0.1087$ & $ 10.04 $ \\
$ 9$ & $ 2.290 \cdot 10^{7}$ & $ 4.450 \cdot 10^{-5}$ & $ 0.05620$ & $ 4.321 \cdot 10^{-5}$ & $ 0.05457$ & $ 8.690 \cdot 10^{-5}$ & $ 0.1097$ & $ 9.831 $ \\
$ 10$ & $ 1.560 \cdot 10^{8}$ & $ 4.008 \cdot 10^{-5}$ & $ 0.05609$ & $ 3.916 \cdot 10^{-5}$ & $ 0.05480$ & $ 7.878 \cdot 10^{-5}$ & $ 0.1102$ & $ 9.728 $ \\
$ 11$ & $ 3.910 \cdot 10^{9}$ & $ 3.602 \cdot 10^{-5}$ & $ 0.05792$ & $ 3.436 \cdot 10^{-5}$ & $ 0.05526$ & $ 6.906 \cdot 10^{-5}$ & $ 0.1110$ & $ 9.579 $ \\
$ 12$ & $ 2.740 \cdot 10^{10}$ & $ 3.323 \cdot 10^{-5}$ & $ 0.05774$ & $ 3.187 \cdot 10^{-5}$ & $ 0.05538$ & $ 6.406 \cdot 10^{-5}$ & $ 0.1113$ & $ 9.517 $ \\
$ 13$ & $ 7.560 \cdot 10^{11}$ & $ 3.037 \cdot 10^{-5}$ & $ 0.05914$ & $ 2.862 \cdot 10^{-5}$ & $ 0.05574$ & $ 5.749 \cdot 10^{-5}$ & $ 0.1120$ & $ 9.410 $ \\
$ 14$ & $ 5.430 \cdot 10^{12}$ & $ 2.755 \cdot 10^{-5}$ & $ 0.05899$ & $ 2.607 \cdot 10^{-5}$ & $ 0.05582$ & $ 5.236 \cdot 10^{-5}$ & $ 0.1121$ & $ 9.370 $ \\
$ 15$ & $ 1.610 \cdot 10^{14}$ & $ 2.527 \cdot 10^{-5}$ & $ 0.06010$ & $ 2.359 \cdot 10^{-5}$ & $ 0.05610$ & $ 4.736 \cdot 10^{-5}$ & $ 0.1126$ & $ 9.288 $ \\
$ 16$ & $ 1.170 \cdot 10^{15}$ & $ 2.424 \cdot 10^{-5}$ & $ 0.05987$ & $ 2.273 \cdot 10^{-5}$ & $ 0.05614$ & $ 4.565 \cdot 10^{-5}$ & $ 0.1127$ & $ 9.261 $ \\
$ 17$ & $ 3.700 \cdot 10^{16}$ & $ 2.273 \cdot 10^{-5}$ & $ 0.06080$ & $ 2.108 \cdot 10^{-5}$ & $ 0.05638$ & $ 4.231 \cdot 10^{-5}$ & $ 0.1132$ & $ 9.196 $ \\
$ 18$ & $ 2.730 \cdot 10^{17}$ & $ 2.172 \cdot 10^{-5}$ & $ 0.06062$ & $ 2.021 \cdot 10^{-5}$ & $ 0.05639$ & $ 4.057 \cdot 10^{-5}$ & $ 0.1132$ & $ 9.177 $ \\
$ 19$ & $ 9.030 \cdot 10^{18}$ & $ 2.010 \cdot 10^{-5}$ & $ 0.06140$ & $ 1.852 \cdot 10^{-5}$ & $ 0.05660$ & $ 3.718 \cdot 10^{-5}$ & $ 0.1136$ & $ 9.123 $ \\
$ 20$ & $ 6.740 \cdot 10^{19}$ & $ 1.908 \cdot 10^{-5}$ & $ 0.06122$ & $ 1.765 \cdot 10^{-5}$ & $ 0.05661$ & $ 3.542 \cdot 10^{-5}$ & $ 0.1136$ & $ 9.109 $ \\
$ 21+$ & $ 10^{n_L}$ & $ 1.819 \cdot 10^{-5}$ & $ 0.06141$ & $ 1.679 \cdot 10^{-5}$ & $ 0.05669$ & $ 3.370 \cdot 10^{-5}$ & $ 0.1138$ & $ 9.082 $ \\
 \hline
\end{tabular}
\end{table}
\end{center}

\section{The least prime in the Chebotarev density theorem - Proof of Theorem \ref{mainthm}}\label{leastprime}

\subsection{Choosing a weight to detect the least prime}\label{weight}

Let $x\ge 1$.   Let $\theta>1$ be a parameter to be chosen later. We consider the kernel
\begin{equation}\label{def-k}
k(s)=k_{\theta}(s)=\Big(\frac{x^{\theta( s-1)}-x^{s-1}}{s-1}\Big)^2
\end{equation}
and recall its inverse Mellin
transform is
\begin{equation}\label{def-hatk}
 \hat{k}(u) =\frac{1}{2\pi i}\int_{2-i\infty}^{2+i\infty}k(s) u^{-s}ds
 =\left.
  \begin{cases}
   u^{-1}\log( x^{2\theta}/{u}) & \text{if } x^{\theta+1} \le u \le x^{2\theta}; \\
   u^{-1}\log({u}/{x^2}) & \text{if } x^2 \le u \le x^{\theta+1};\\
   0 & \text{otherwise.} 
  \end{cases}
  \right.
\end{equation}
In \cite{AK,LMO}, $\theta$ was chosen to be 2. 
Note that 
\begin{align}
&  \hat{k}(u) \le \frac{(\theta -1)(\log x)}{u} ,  \label{bnd-khat}\\
& k(1)=k_{\theta}(1)=( (\theta-1) \log x )^2, \label{bnd-k1} \\
& k(\s)=\Big(\frac{x^{\theta( \s-1)}-x^{\s-1}}{\s-1}\Big)^2 
= \Big(\frac{1-x^{(\theta-1)( \s-1)}}{\s-1}\Big)^2 x^{2(\s-1)}
\le \frac{1}{(\s-1)^2}  x^{2(\s-1)} \ \text{if}\ \s<1
, \label{bnd-ksigma<1}\\
& |k(\s+it)| \le \frac{\Big(x^{\theta( \s -1)}+ x^{\s -1}\Big)^2}{|s-1|^2} 
\le k(\s ) \Big( \frac{1+x^{(\theta-1)( \s -1)}}{1- x^{(\theta-1)( \s -1)} }\Big)^2 \frac{(\s  -1)^2}{(\s-1)^2+t^2} 
, \label{bnd-kgal}\\
& \label{k-k-bd}
k(1)-k(\beta_1) 
= (\log x)^2 \phi_\theta( (1-\beta_1) \log x), \ \text{where}\  \phi_{\theta}(v) = (\theta-1)^2 - \Big(\frac{e^{-v}-e^{-\theta v}}{v} \Big)^2.
\end{align} 
We shall require the following lemma regarding the properties of $\phi_{\theta}$.

\begin{lemma}\label{phi-lemma}
Let $\theta>1$. Then 
\begin{enumerate}
 \item[(i)] $ \phi_{\theta}(v)$ is increasing for $v>0$;
 \item[(ii)] for any $v>0$, we have  $\phi_\theta(v) \ge (\theta-1)^2 (1- e^{-2 v})$. \\
 In particular, if $b>0$, then for any $0\le v\le b$, we have
 $
 \phi_\theta(v) \ge 2(\theta-1)^2 e^{-2b}v.
 $
\end{enumerate}
\end{lemma}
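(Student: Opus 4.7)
The proof proposal revolves around recognising the convenient integral representation
\[
f(v) := \frac{e^{-v}-e^{-\theta v}}{v} = \int_{1}^{\theta} e^{-s v}\, ds,
\]
so that $\phi_\theta(v) = (\theta-1)^2 - f(v)^2$. Since $f(v)>0$ for all $v>0$ (the integrand is positive), monotonicity of $\phi_\theta$ reduces to monotonicity of $f$.

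For part (i), I would differentiate under the integral sign to get
\[
f'(v) = -\int_{1}^{\theta} s\, e^{-sv}\, ds < 0,
\]
which shows $f$ is strictly decreasing and positive on $(0,\infty)$. Hence $f^2$ is strictly decreasing and $\phi_\theta = (\theta-1)^2 - f^2$ is strictly increasing, giving (i). (As a sanity check, $f(0^+)=\theta-1$ so $\phi_\theta(0^+)=0$, and $f(\infty)=0$ so $\phi_\theta(\infty)=(\theta-1)^2$, consistent with the increase.)

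For part (ii), I would bound the integrand by its value at $s=1$: for $s\in[1,\theta]$ and $v>0$, $e^{-sv}\le e^{-v}$, whence
\[
f(v) = \int_{1}^{\theta} e^{-sv}\, ds \le (\theta-1)\, e^{-v}.
\]
Squaring this pointwise inequality and subtracting from $(\theta-1)^2$ yields
\[
\phi_\theta(v) = (\theta-1)^2 - f(v)^2 \ge (\theta-1)^2\bigl(1-e^{-2v}\bigr),
\]
which is the first assertion of (ii). For the linear lower bound on $[0,b]$, it now suffices to show $1-e^{-2v}\ge 2 e^{-2b} v$ for $0\le v\le b$. Setting $g(v)=1-e^{-2v}-2 e^{-2b} v$, we have $g(0)=0$ and
\[
g'(v) = 2 e^{-2v} - 2 e^{-2b} \ge 0 \quad \text{for } v\le b,
\]
so $g\ge 0$ on $[0,b]$, completing the proof. (The case $v=0$ is handled by continuity, with $\phi_\theta(0)=0$.)

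There is no real obstacle here: the whole argument hinges on spotting the integral representation of $f$, after which both monotonicity and the pointwise bound fall out immediately, and the last inequality is an elementary exercise in calculus. The only mild subtlety worth flagging in the writeup is that one must square the inequality $f(v)\le (\theta-1)e^{-v}$ \emph{before} subtracting from $(\theta-1)^2$ (the direction of the inequality flips correctly since both sides are nonnegative).
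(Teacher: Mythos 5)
Your proof is correct, and for part (i) it takes a genuinely different and shorter route than the paper's. The paper computes $\phi_\theta'(v)$ explicitly, factoring it as
\[
\phi_\theta'(v) = 2\,\frac{e^{-v}-e^{-\theta v}}{v^2}\Big(e^{-v}\big(1+\tfrac1v\big)-e^{-\theta v}\big(\theta+\tfrac1v\big)\Big),
\]
and then shows the second factor is nonnegative by fixing $v$ and arguing that $\theta\mapsto e^{-\theta v}(\theta+\tfrac1v)$ is decreasing. Your integral representation $f(v)=\int_1^\theta e^{-sv}\,ds$ collapses all of this: positivity and strict decrease of $f$ are immediate from differentiating under the integral sign, so $\phi_\theta=(\theta-1)^2-f^2$ is increasing with no further work. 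For part (ii), the paper appeals to the mean value theorem to obtain $\big|\tfrac{e^{-v}-e^{-\theta v}}{1-\theta}\big|\le ve^{-v}$, which is morally the same as your pointwise bound $e^{-sv}\le e^{-v}$ under the integral; the two are equivalent up to repackaging. The final linear bound $1-e^{-2v}\ge 2e^{-2b}v$ on $[0,b]$ is handled identically in both (a monotone auxiliary function vanishing at $0$). The integral representation buys you a unified treatment of (i) and (ii) and eliminates the somewhat fiddly derivative computation in (i), so your version is, if anything, the cleaner writeup.
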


\begin{proof}[Proof of Lemma \ref{phi-lemma}]
We first note that
\begin{align*}
 \phi'_{\theta}(v)
 &= - 2 \Big(\frac{e^{-v}-e^{-\theta v}}{v} \Big)( (-v^{-2})(e^{-v}-e^{-\theta v}) + (v^{-1})     (-e^{-v} +\theta e^{-\theta v} ) )\\
&= 2 \Big(\frac{e^{-v}-e^{-\theta v}}{v^2} \Big) \Big(e^{-v}\Big(1+\frac{1}{v}\Big) -   e^{-\theta v}  \Big(\theta+\frac{1}{v}\Big) \Big).
\end{align*}
We claim $\phi'_{\theta}(v)\ge 0$ for $v\ge 0$ and so $\phi_{\theta}$ is increasing.
More precisely, letting $f(\theta)=  e^{-\theta v}  (\theta+\frac{1}{v} )$, we claim $f(1)\ge f(\theta)$ whenever $\theta> 1$. 
This follows from
$
f'(\theta)=  -v e^{-\theta v}   (\theta+\frac{1}{v} ) + e^{-\theta v}
 = -\theta v  e^{-\theta v} \le  0.
$
\\
Secondly, by the mean value theorem, for $\theta>1$, we have
$$
\Big|\frac{e^{-v}-e^{-\theta v}}{1-\theta} \Big|
 =  \max_{t\in[1,\theta]}| v e^{-t v}|\le   v e^{-v},
$$
and thus
$
\big(\frac{e^{-v}-e^{-\theta v}}{v}\big)^2 \le (\theta-1)^2 e^{-2v}.
$
Hence, we obtain
$$
 \phi_{\theta}(v) = (\theta-1)^2 - \Big(\frac{e^{-v}-e^{-\theta v}}{v} \Big)^2 
 \ge (\theta-1)^2 (1 - e^{-2v}).
$$
Lastly, for $c>0$, letting $g_c(v)= (1- e^{-c v}) - cve^{-cb} $, we note that $g(v)\ge 0$ for all $v\in[0,b]$. Indeed, as $g_c(0)=0$ and $g_c'(v)= c e^{-c v} -  ce^{-cb}\ge 0$ for all $v\le b$, we know that $g_c(v)$ is non-negative and increasing. Now, the last part of the lemma follows from using $g_c(v)$ with $ c=2$.
\end{proof}

\subsection{Explicit inequalities}\label{expl-ineq}

Throughout our discussion (from Section \ref{expl-ineq} to Section \ref{small-exp-zero}),  we let $L/K$ be a Galois extension of number fields with Galois group $G$, and we let  $C$ be a conjugacy class in $G$. We let $\mathcal{P}(C)$ denote the set of the unramified primes $\mathfrak{p} \subset \mathcal{O}_K$ of degree one such that $\sigma_{\mathfrak{p}}=C$. In addition, we choose $d_0$ according to Table \ref{table} and assume $\log d_L\ge \Lo_0= \log d_0$. We let $x\ge 101$ and let $k$ be a weight to be chosen later. We recall that $\beta_1$ denotes the possible exceptional zero, appearing in $(1-\frac{1}{R_0\Lo},1)$, of $\zeta_L(s)$. By \eqref{zfr-enlarged-R1}, we know that there is at most one zero of $\zeta_L(s)$ in $(1-\frac{1}{R_1\Lo},1- \frac{1}{R_0\Lo}]$; we shall denote such a zero by $\beta'$ if it exists.

Following \cite{AK} and \cite{LMO}, our goal is to show that there exists $c_4>0$ such that for $ x =  d_L^{c_4} = e^{c_4\Lo}$, we have
\begin{equation}\label{def-mathcalSC}
\mathcal{S}_C=  \sum_{\mathfrak{p}\in \mathcal{P}(C)} (\log N\mathfrak{p}) \hat{k} (N\mathfrak{p})>0,
\end{equation}
which implies that  there is a prime $\mathfrak{p}\in \mathcal{P}(C)$  such that 
\[
N\mathfrak{p}\le x^{2\theta} = d_L^{2\theta c_4}.
\]  
Thus we want to estimate
\begin{equation}\label{def-B}
B = 2\theta c_4.
\end{equation}

\begin{proposition}\label{prop-explicit-ineq}
Let  $k(s)$ be the kernel  given as in \eqref{def-k}, with $ x =  d_L^{c_4} \ge 101$, and $\mathcal{S}_C$ be the  sum  given as in \eqref{def-mathcalSC}. 
Then we have
\begin{align*}
\frac{|G|}{|C|}\mathcal{S}_C
\ge & 
(1-\delta(\beta_1)) (\theta-1)^2 (\log x )^2 
+  \delta(\beta_1) \phi_\theta( (1-\beta_1) \log x)  (\log x)^2  -(1-\delta(\beta_1))\delta(\beta')|k(\beta')|\\
&- \sum_{\rho \in \mathcal{S}\backslash \{\beta_1,\beta', 1-\beta_1\}}|k(\rho )| 
- \alpha_3 (\theta-1) \frac{|G|}{|C|}\Lo \frac{\log x}{x}
- \delta(\beta_1)  \frac{1}{( 1- \frac{1}{2\Lo} )^2} x^{-2(1- \frac{1}{2\Lo})}
-  c_5\Lo x^{-2} ,
\end{align*}
where $\phi_{\theta}, \alpha_3$, and $c_5$ are defined in \eqref{k-k-bd}, \eqref{alpha3}, and \eqref{c15}, respectively.
Here $\delta(\beta_1)=1$ if the exceptional zero $\beta_1$ exists for $\zeta_L(s)$, and $\delta(\beta_1)=0$ otherwise. $\delta(\beta')$ is defined the same way for $\beta_1$.
\end{proposition}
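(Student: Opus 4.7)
The plan is a classical explicit-formula argument refined as in \cite{LMO,AK,KNW}. First, expanding the indicator of $C$ via orthogonality of characters of $G$, I would write
$$\frac{|G|}{|C|}\mathcal{S}_{C}=\sum_{\chi}\bar\chi(C)\sum_{\substack{\mathfrak{p}\text{ unram.}\\\deg\mathfrak{p}=1}}\chi(\sigma_\mathfrak{p})(\log N\mathfrak{p})\hat{k}(N\mathfrak{p}),$$
where the inner sum runs over degree-one unramified primes of $K$. Completing the inner sum to all unramified primes and all prime powers introduces error terms from ramified primes, from primes of residue degree $\ge 2$, and from prime powers with exponent $m\ge 2$; after this completion, each character sum equals $\frac{1}{2\pi i}\int_{(2)}(-L'/L)(s,\chi,L/K)\,k(s)\,ds$ by Mellin-Parseval applied to $\hat{k}$. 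Using $\zeta_{L}(s)=\prod_{\chi}L(s,\chi,L/K)^{\chi(1)}$ together with $|\bar\chi(C)|\le\chi(1)$, the contribution from the non-trivial zeros is controlled uniformly by a sum over the non-trivial zeros of $\zeta_{L}$.

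Next, shifting each contour leftwards, I collect residues. The pole of $L(s,\mathbf{1},L/K)=\zeta_{K}(s)$ at $s=1$ produces the main term $k(1)=(\theta-1)^{2}(\log x)^{2}$; each non-trivial zero $\rho$ of $\zeta_{L}$ contributes $-k(\rho)$; and the gamma-factor poles of the completed $L$-functions contribute the trivial-zero terms. Using \eqref{k-k-bd}, I combine $k(1)-k(\beta_{1})=(\log x)^{2}\phi_{\theta}((1-\beta_{1})\log x)$ whenever $\beta_{1}$ exists. The key extra step is to isolate the functional-equation partner $1-\beta_{1}$ (which, by $\xi_{L}(s)=\xi_{L}(1-s)$, is a genuine non-trivial zero of $\zeta_{L}$ sitting near $s=0$ when $\beta_{1}$ is near $s=1$). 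Applying \eqref{bnd-kgal} with $\sigma=1-\beta_{1}<1$ and using $x^{\theta(\sigma-1)}+x^{\sigma-1}\le 2x^{\sigma-1}$ for $\theta>1$ yields $|k(1-\beta_{1})|\le 4x^{-2\beta_{1}}/\beta_{1}^{2}$; combined with $\beta_{1}>1-1/(2\Lo)$ (a consequence of the zero-free region \eqref{zfr-R0}) this produces the displayed $\frac{4}{(1-1/(2\Lo))^{2}}x^{-2(1-1/(2\Lo))}$ bound. The zero $\beta'$ (present only when $\beta_1$ is absent, by Proposition \ref{lemma-zfr-enlarged}) is isolated as $-|k(\beta')|$, and the remaining non-trivial zeros are bounded by $-\sum_{\rho}|k(\rho)|$.

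The remaining errors are handled explicitly. The ramified-prime contribution is controlled via the conductor-discriminant identity, producing the factor $\log d_{L}$; the higher-degree-prime and prime-power contributions use $\hat{k}(u)\le(\theta-1)(\log x)/u$ from \eqref{bnd-khat} together with the standard bound $\sum_{\mathfrak{p},m\ge 2}(\log N\mathfrak{p})N\mathfrak{p}^{-m}\ll n_{L}$; together these give the $\alpha_{3}(\theta-1)(|G|/|C|)\Lo(\log x)/x$ term. The tail of the shifted contour integral and the trivial-zero sums are estimated using \eqref{bnd-ksigma<1} for $\Re(s)\le 0$, and they absorb into the $c_{5}\Lo x^{-2}$ remainder.

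The principal obstacle is the precise non-asymptotic bookkeeping: tracking every sign and multiplicity through the contour shift, ensuring that $1-\beta_{1}$ is counted exactly once (it is real and not paired with a distinct conjugate), and confirming that the constants $\alpha_{3}$ and $c_{5}$ (defined later in \eqref{alpha3} and \eqref{c15}) are sharp enough so that the resulting inequality ultimately yields $B=310$ in Theorem \ref{mainthm}. Keeping $\theta>1$ free, instead of fixing $\theta=2$ as in \cite{LMO,AK}, provides an extra parameter to optimise at the end.
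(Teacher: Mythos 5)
Your overall strategy — explicit formula via the kernel $k$, collection of residues, and separate treatment of $\beta_1$, $1-\beta_1$, and $\beta'$ — is the right shape, and the estimates you sketch for the error terms (ramified primes, prime powers, Minkowski's bound, the tail integral) match the paper in spirit. However, there is a genuine gap in the central contour-shifting step.

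You propose to write $\frac{|G|}{|C|}\mathcal{S}_C$ as a sum over irreducible characters $\chi$ of $G = \Gal(L/K)$, identify each character sum with $\frac{1}{2\pi i}\int_{(2)} (-L'/L)(s,\chi,L/K)\,k(s)\,ds$, and then ``shift each contour leftwards.'' For non-abelian irreducible $\chi$, the Artin $L$-function $L(s,\chi,L/K)$ is only known to be meromorphic (Brauer), not entire: holomorphy in the critical strip is Artin's conjecture. Shifting the contour therefore produces residues not only at zeros but at any poles of $L(s,\chi,L/K)$, which have the opposite sign and are weighted by $\overline{\chi}(C)$ rather than $\chi(1)$; the bound $|\overline{\chi}(C)|\le\chi(1)$ together with $\zeta_L = \prod_\chi L(s,\chi)^{\chi(1)}$ only controls the zero contributions if you already know $\operatorname{ord}_\rho L(s,\chi)\ge 0$, i.e.\ Artin's conjecture. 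The paper avoids this exactly by invoking Deuring's reduction: it rewrites $\Psi_C(s)$ as $-\frac{|C|}{|G|}\sum_\psi \overline{\psi}(g_C)\,\frac{L'}{L}(s,\psi,L/E)$ where $E = L^{\langle g_C\rangle}$ is the fixed field of a representative of $C$, so that the $\psi$ are \emph{abelian} characters of $\Gal(L/E)$, the $L(s,\psi,L/E)$ are Hecke $L$-functions (entire for $\psi\ne 1$), their zeros are a sub-multiset of the zeros of $\zeta_L$, and the contour shift is fully unconditional. Without this step your argument is conditional on Artin's conjecture.

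A further, more mechanical point: after the Deuring reduction the paper estimates the left contour at $\Re(s)=-\tfrac12$ via explicit bounds on $|L'/L|$ for Hecke $L$-functions (\eqref{def-v}), then combines with the conductor-discriminant formula $\sum_\psi \log A(\psi) = \log d_L$ to produce the $c_5\,\Lo\, x^{-2}$ term; your ``tail of the shifted contour'' and ``trivial-zero sums'' would need the same Hecke-side input to give the quoted constants. Everything else you outline — using \eqref{k-k-bd} for $k(1)-k(\beta_1)$, \eqref{bnd-ksigma<1} and $\beta_1 > 1 - \tfrac{1}{2\Lo}$ for $|k(1-\beta_1)|$, isolating $\beta'$ from the remaining zero sum, and keeping $\theta>1$ as a free parameter — is consistent with what the paper does.
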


\begin{proof}
Let $g_{C}\in G$ be a representative of $C$, and let
 \begin{equation}\label{def-psiC}
 \Psi_{C}(s) = -\frac{|C|}{|G|} \sum_{\chi} \overline{\chi}(g_{C}) \frac{L'}{L} (s,\chi,L/K),
\end{equation}
where the sum is over the irreducible characters $\chi$ of $G=\Gal(L/K)$, and $L(s,\chi,L/K)$ is the Artin $L$-function attached to $\chi$.  It follows from the orthogonality property of irreducible characters of $G$ that 
 \begin{equation}\label{def-IC}
I_C =  \frac{1}{2\pi i}\int_{2-i\infty}^{2+i\infty}  \Psi_{C}(s)k(s) ds =\sum_{\mathfrak{p}}\sum_{m=1}^{\infty} J(\mathfrak{p}^m)  (\log N\mathfrak{p}) \hat{k} (N\mathfrak{p}^m),
\end{equation}
where  for ramified primes $\mathfrak{p}$, $|J(\mathfrak{p}^m)|\le 1$, for unramified primes $\mathfrak{p}$, $J(\mathfrak{p}^m)=1$ if $\sigma_{\mathfrak{p}}^m=C$, and $J(\mathfrak{p}^m)= 0$ otherwise.
By \eqref{bnd-khat}, as argued in \cite[Lemmata 3.1(1) and 3.3(1)]{AK} (see also \cite[Lemmata 3.1-3.3]{LMO}), for $x\ge 2$, one has
\begin{equation}
\sum_{\text{$\mathfrak{p}$ ramified}}\sum_{m\ge 1}  J(\mathfrak{p}^m)  (\log N\mathfrak{p}) \hat{k} (N\mathfrak{p}^m)\le 2(\theta-1) \frac{\log x}{x^2}\log d_L.
\end{equation}
Also, for $x\ge 101$, one has
\begin{equation}\label{def-alpha0}
\sum_{\mathfrak{p},m} J(\mathfrak{p}^m)  (\log N\mathfrak{p}) \hat{k} (N\mathfrak{p}^m)\le  16.08 \alpha_0 (\theta-1)\frac{  \log x}{x}  n_K ,\ \text{with}\ \alpha_0=1.25506 ,
\end{equation}
where  the sum is over $\mathfrak{p},m$ such that $N\mathfrak{p}^m \neq p$ for any (rational) prime $p$. Hence, by Minkowski's bound, for $x\ge 101$, we have as in \cite[Proposition 3.5]{AK}
\begin{equation}\label{IC-SC-bd}
|I_C - \mathcal{S}_C|\le  \frac{2(\theta-1) \log x}{x^2}\log d_L + 16.08 \alpha_0 (\theta-1) n_K\frac{\log x}{x} \le \alpha_3(\theta-1) \frac{ \log x}{x} \log d_L,
\end{equation}
where  
\begin{equation}\label{alpha3}
 \alpha_3 =\frac{2}{101} +\frac{32.16\alpha_0}{\log 3} =36.7595\ldots.
\end{equation}
The second step is to relate $ \Psi_{C}(s)$ to the zeros of the Dedekind zeta function $\zeta_L(s)$. To do so, we recall that by Deuring's reduction \cite{De}, denoting the fixed field of $g_{C}$  by $E$, one has
\[
 \Psi_{C}(s) = -\frac{|C|}{|G|} \sum_{\psi} \overline{\psi}(g_{C}) \frac{L'}{L} (s,\psi,L/E),
\]
where the sum is over the irreducible characters $\psi$ of $\Gal(L/E)$. As $L/E$ is abelian, it follows from Artin reciprocity that each Artin $L$-function $L(s,\psi,L/E)$ corresponds to a Hecke $L$-function.  This allows us to use the classical method of contour integration to find a lower bound of $I_C$ (in terms of $k(s)$ and the zeros of $\zeta_L(s)$). Indeed, one has
$$
I_C  =\frac{|C|}{|G|} \sum_{\psi}\overline{\psi}(g_{C}) \Big( \frac{1}{2\pi i} \int_{2-i\infty}^{2+i\infty} -\frac{L'}{L} (s,\psi,L/E) k(s)ds \Big).
$$
Proceeding exactly as in \cite[Sec. 4]{AK} and \cite[Sec. 3]{LMO} (see also \cite{LO}), an application of Cauchy's integral formula gives
\begin{equation}\label{line-int-bd}
\frac{|G|}{|C|}I_C \ge k(1) -\sum_{\rho \in \mathcal{S}}|k(\rho )| - n_L k(0) - \sum_{\psi} |V(\psi)|,
\end{equation}
where
$$
V(\psi) = \frac{1}{2\pi i} \int_{-\frac{1}{2}-i\infty}^{-\frac{1}{2}+i\infty} -\frac{L'}{L} (s,\psi,L/E) k(s)ds.
$$
Denoting $A(\psi)=d_E N_{E/\Bbb{Q}}(\mathfrak{f}_{\psi})$, where $\mathfrak{f}_{\psi}$ is the conductor of $\psi$, we appeal to a bound for the $L$-term of the form as given in \cite[Lemma 6.2]{LO} and as given explicitly in \cite[Lemma 2.19]{DasMSc}: 
\begin{equation}\label{def-v}
\Big|\frac{L'}{L} (-\frac{1}{2} +it,\psi,L/E) \Big| \le \log A(\psi) +n_E v(t),
\ \text{with}\ 
v(t)=\log (\sqrt{0.25 +t^2} +1)+ 4.452+ \frac{83}5,
\end{equation}
with $4.452+ \frac{83}5 =21.052$.
(Note this is an improvement on \cite[Lemma 5.1]{Wi} where instead $v(t)=\log (\sqrt{0.25 +t^2} +2)+\frac{19683}{812}$.)

Using the bound \eqref{bnd-kgal} for $k$, we have  for any $x\ge 101$,
\[
\Big|k\Big(-\frac{1}{2}+it\Big)\Big|
\le k\Big(-\frac{1}{2}\Big) \Big(\frac{1 +101^{-\frac{3}{2}(\theta-1)}}{  1 -101^{-\frac{3}{2}(\theta-1)}}\Big)^2   \frac{9}{9+ 4t^2}.
\]
Hence, we deduce
$$
|V(\psi)| \le \frac{1}{2 \pi} k\Big(-\frac{1}{2}\Big) \Big(\frac{1 +101^{-\frac{3}{2}(\theta-1)}}{  1 -101^{-\frac{3}{2}(\theta-1)}}\Big)^2 \int_{-\infty}^{\infty}  (\log A(\psi) +n_E v(t))     \frac{9}{9+ 4t^2} dt.
$$ 
Now, from \eqref{line-int-bd} and  the conductor-discriminant formula $\sum_\psi \log A(\psi) =\log d_L$, 
\begin{equation}\label{lower-bd-IC}
\frac{|G|}{|C|}I_C \ge k(1) -\sum_{\rho \in \mathcal{S}}|k(\rho )| -W_0 k\Big(-\frac{1}{2}\Big)\log d_L - n_L\Big(k(0)+W_1 k\Big(-\frac{1}{2}\Big) \Big),
\end{equation}
where, as before, $\mathcal{S}$ denotes the set of all non-trivial zeros of $\zeta_L(s)$, 
\begin{equation}\label{def-mu&nu}
W_0=\frac{1}{\pi }\int_0^{\infty} W(t)dt,\enspace W_1=\frac{1}{\pi }\int_0^{\infty} v(t)W(t)dt, \enspace W(t)= \Big(\frac{1 +101^{-\frac{3}{2}(\theta-1)}}{  1 -101^{-\frac{3}{2}(\theta-1)}}\Big)^2  \frac{9}{9+ 4t^2},
\end{equation}
and $v(t)$ is defined in \eqref{def-v}. 
Together with Minkowski's bound and \eqref{bnd-ksigma<1}, we deduce,
for $x\ge 101$, 
$$
W_0 k\Big(-\frac{1}{2}\Big)\log d_L +n_L \Big( k(0) +W_1 k\Big(-\frac{1}{2}\Big) \Big)
\le c_5 (\log d_L)x^{-2},
$$
where
\begin{equation}\label{c15}
c_5 =\frac{2}{\log 3} +\frac{4}{909} \Big( W_0 +\frac{2}{\log 3}W_1 \Big).
\end{equation}
Hence, putting \eqref{IC-SC-bd} and \eqref{lower-bd-IC} together, if $\zeta_L(s)$ has no exceptional zero, we have
\begin{equation}\label{lower-bd-k-no-beta1}
\frac{|G|}{|C|}\mathcal{S}_C\ge k(1) -\sum_{\rho \in \mathcal{S}}|k(\rho )| - \frac{c_5\Lo}{x^2} -\alpha_3 (\theta-1) \frac{|G|}{|C|} \frac{\log x}{x}\Lo,
\end{equation} 
where  $c_5$  and $\alpha_3$  are defined  in  \eqref{c15} and \eqref{alpha3}.
We then use  \eqref{bnd-k1} to bound $k(1)$. 
Otherwise, if $\zeta_L(s)$ admits an exceptional zero $\beta_1$, the term $k(1)-k(\beta_1)$ appears. By \eqref{bnd-ksigma<1} and \eqref{k-k-bd}, we know
\begin{equation}
\label{bd-k-beta1} 
k(1)- k(\beta_1) = (\log x)^2 \phi_\theta( (1-\beta_1) \log x) \ \text{and}\ 
|k(1-\beta_1)|\le \frac{x^{-2\beta_1}}{\beta_1^2}\le  \frac{x^{-2(1- \frac{1}{2\Lo})}}{( 1- \frac{1}{2\Lo} )^2}. 
\end{equation}
Thus, in the case the exceptional zero $\beta_1$ appears, we have
\begin{align}\label{lower-bd-k}
 \begin{split}
\frac{|G|}{|C|}\mathcal{S}_C
\ge &  (\log x)^2 \phi_\theta( (1-\beta_1) \log x) +  \frac{1}{( 1- \frac{1}{2\Lo} )^2} x^{-2(1- \frac{1}{2\Lo})}\\
&- \sum_{\rho \in \mathcal{S}\backslash \{\beta_1, 1-\beta_1\}}|k(\rho )|
  - \frac{c_5\Lo}{x^2} -\alpha_3 (\theta-1) \frac{|G|}{|C|} \frac{\log x}{x}\Lo.
 \end{split}
\end{align}
Finally, writing the sum in \eqref{lower-bd-k} as
$$
\sum_{\rho \in \mathcal{S}\backslash \{\beta_1, 1-\beta_1\}}|k(\rho )|=(1-\delta(\beta_1))\delta(\beta')|k(\beta')| + \sum_{\rho \in \mathcal{S}\backslash \{\beta_1,\beta' , 1-\beta_1\}}|k(\rho )|,
$$
we conclude the proof.
\end{proof}
We now focus on controlling the size of the above sums over the zeros. This is done by means of  zero-free regions, Deuring-Heilbronn repulsion, and zero-density estimates for zeros of $\zeta_L(s)$. 
First, we have some explicit estimate for $N_L (T)$ which counts the number of non-trivial zeros $\rho = \beta+i\gamma$ of  $\zeta_L(s)$ such that  $|\gamma| \leq T$.
We refer to the recent improvement of \cite{HSW}:
\begin{lemma}
Let $T\ge 1$. Then
\begin{equation}\label{zero-density}
\Big| N_L (T)  - \frac{T}{\pi} \log \Big( d_L \Big( \frac{T}{2\pi e}\Big)^{n_L}\Big)\Big|
 \le   0.296 (\log d_L + n_L \log T) +  3.971  n_L  +  3.969.
\end{equation}
\end{lemma}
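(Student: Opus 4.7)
The plan is to count the non-trivial zeros $\rho = \beta + i\gamma$ with $|\gamma| \le T$ by applying the argument principle to the completed Dedekind zeta function. Set
\[
\xi_L(s) = s(s-1)\, d_L^{s/2}\, \gamma_L(s)\, \zeta_L(s),
\]
where $\gamma_L(s)$ is the standard archimedean factor (a product of $r_1$ copies of $\pi^{-s/2}\Gamma(s/2)$ and $r_2$ copies of $(2\pi)^{1-s}\Gamma(s)$). Then $\xi_L$ is entire of order $1$, obeys the functional equation $\xi_L(s)=\xi_L(1-s)$, and inside the rectangle $R=\{s:-\tfrac12\le\Re s\le\tfrac32,\ |\Im s|\le T\}$ its zeros are exactly the non-trivial zeros of $\zeta_L(s)$ with $|\gamma|\le T$. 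After perturbing $T$ infinitesimally to avoid a zero on the boundary, the argument principle gives $N_L(T)=\frac{1}{2\pi}\Delta_R\arg\xi_L$, and the functional equation halves the work, so that it suffices to control $\Delta\arg\xi_L$ along the broken path from $\tfrac32$ to $\tfrac32+iT$ and then across to $-\tfrac12+iT$.

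I would split this change of argument into four pieces. The factor $d_L^{s/2}$ contributes exactly $\tfrac{T}{2}\Lo$, and the polynomial $s(s-1)$ contributes a bounded $O(1)$ quantity. For $\gamma_L(s)$, I would invoke an explicit Stirling formula $\log\Gamma(z)=(z-\tfrac12)\log z - z + \tfrac12\log(2\pi)+R(z)$ with the explicit remainder $|R(z)|\le 1/(12|z|)$ (valid in right half-planes bounded away from the negative real axis), take imaginary parts along the horizontal segment, and combine the $r_1+2r_2=n_L$ pieces. This yields the main term $\frac{n_L T}{\pi}\log(T/(2\pi e))$ together with an explicit error of size proportional to $n_L$.

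The delicate step is to bound $\Delta\arg\zeta_L(s)$ along the horizontal segment $\sigma+iT$ for $-\tfrac12\le\sigma\le\tfrac32$. I would use Backlund's device: write this change as at most $\pi$ times the number of sign changes of $\Re\zeta_L(\sigma+iT)$, and count these via Jensen's formula applied on a disk centred at $s_0=2+iT$. The lower bound $|\zeta_L(s_0)|\ge\zeta(2)^{-n_L}$ follows at once from the Euler product. The matching upper bound on a slightly larger disk comes from the Phragm\'en--Lindel\"of convexity principle: control $|\zeta_L|$ on $\Re s=2$ by $\zeta(2)^{n_L}$, control it on $\Re s=-1$ via the functional equation together with Stirling, and interpolate across the critical strip. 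Collecting the four contributions gives the inequality in the stated form $\bigl|N_L(T) - \tfrac{T}{\pi}\log(d_L(T/(2\pi e))^{n_L})\bigr| \le c_1(\Lo+n_L\log T)+c_2 n_L+c_3$.

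The main obstacle is the explicit optimisation needed to squeeze the numerical constants down to $0.296$, $3.971$, and $3.969$. Every ingredient above contributes an explicit additive piece: Stirling leaves its $1/(12|z|)$ remainder, the Jensen--Backlund step carries loose constants that depend on the chosen disk radii, and the convexity bound for $|\zeta_L|$ depends on an explicit value at $\Re s=-1$. It is the careful balancing of these choices (in the spirit of Backlund, McCurley, Trudgian, and the Dedekind refinement carried out in \cite{HSW}) that turns the natural $O$-statement into the stated explicit inequality, and this optimisation is where essentially all the work lies.
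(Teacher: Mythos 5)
The paper does not prove this lemma; it simply cites the result from Hasanalizade--Shen--Wong \cite{HSW} and records the statement. Your outline---argument principle for the completed Dedekind zeta function, splitting the boundary argument change into the $d_L^{s/2}$, polynomial, archimedean, and $\zeta_L$ pieces, explicit Stirling for the gamma factors, and Backlund's sign-change device together with Jensen's formula and a Phragm\'en--Lindel\"of bound for $\Delta\arg\zeta_L$---is exactly the strategy carried out in \cite{HSW}, following the McCurley/Trudgian tradition, so your approach is the right one. The caveat, which you yourself flag, is that your write-up produces the shape of the bound, $c_1(\log d_L + n_L\log T)+c_2 n_L+c_3$, but not the specific constants $0.296$, $3.971$, $3.969$; deriving those requires fixing the disk radii and centre in the Jensen--Backlund step, committing to explicit values in the convexity interpolation (including the specific lower bound on $|\zeta_L|$ via $\zeta(2)^{-n_L}$ and a matching explicit upper bound), and tracking every additive error down to the decimal. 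That optimisation is essentially the entire content of the lemma, so as written your argument is a faithful sketch of the proof in the cited reference rather than a self-contained derivation of the stated inequality.
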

In addition, we require a sharper estimate for the number of non-trivial zeros close to $1$.
Denoting $Z(r;s)=\{\rho \in\mathcal{S} \mid |\rho -s |\le r \}$ , we consider $n(r;s)=| Z(r;s)|$, the number of non-trivial zeros $\rho $ of $\zeta_L(s)$ such that $|\rho -s|\le r$. 

\begin{lemma}\label{counting-zero}
Let $n_0\ge2, \mathscr{Q}_0>0, 0<r\le 1$, and $\alpha>0$. If $n_0 \le n_L\le \mathscr{Q}_0 \Lo$, then 
\begin{equation}\label{counting-zero-bd-fixed}
n(r;1)
\le \Big( \frac{1+\alpha}{\alpha}\Big)^2( 1+  \alpha r  \omega(\alpha)  \Lo),
\end{equation}
where
\begin{equation}\label{def-omega}
\omega(\alpha) =\frac{1}{2} + \frac{\mathscr{Q}_0}{2} \max\Big\{ \frac{\Gamma'}{\Gamma} \Big( \frac{2+\alpha}{2} \Big) -\log\pi +\frac{2}{n_0} ,0 \Big\}  .
\end{equation}
\end{lemma}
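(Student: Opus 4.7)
The plan is to combine positivity of $\Re(s_0-\rho)^{-1}$ for non-trivial zeros $\rho$ with the Hadamard product of $\zeta_L(s)$ at a test point just to the right of $s=1$, and to extract the count $n(r;1)$ by exploiting a lower bound on each summand. I would take the real test point $s_0 = 1 + \alpha r$.

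First, since every non-trivial zero satisfies $\Re \rho \le 1$, one has $\Re(s_0-\rho) \ge \alpha r > 0$, so each summand in $\sum_\rho \Re(s_0-\rho)^{-1}$ is positive. For zeros $\rho \in Z(r;1)$ the triangle inequality gives $|s_0 - \rho| \le (1+\alpha)r$, and hence
\[
\Re\frac{1}{s_0-\rho} \ge \frac{\Re(s_0-\rho)}{|s_0-\rho|^2} \ge \frac{\alpha}{(1+\alpha)^2\,r}.
\]
Summing over $Z(r;1)$ and restoring the non-negative contributions of the remaining zeros yields the geometric inequality
\[
n(r;1) \le \frac{(1+\alpha)^2 r}{\alpha}\sum_\rho \Re\frac{1}{s_0-\rho}.
\]

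Second, I would bound the right-hand sum using the Hadamard factorisation of the completed Dedekind zeta $\xi_L(s) = s(s-1)d_L^{s/2}\gamma_L(s)\zeta_L(s)$. Logarithmic differentiation, taking real parts, and invoking the standard pairing $\rho \leftrightarrow 1-\overline{\rho}$ (which cancels the Hadamard constant $B_L$ against $\sum_\rho \Re(1/\rho)$) yields
\[
\sum_\rho \Re\frac{1}{s_0-\rho} = \frac{1}{s_0-1} + \frac{1}{s_0} + \frac{1}{2}\log d_L + \Re\frac{\gamma_L'}{\gamma_L}(s_0) + \Re\frac{\zeta_L'}{\zeta_L}(s_0).
\]
For real $s_0 > 1$ the last term is $\le 0$ because $-\zeta_L'/\zeta_L(s_0)$ is a Dirichlet series with non-negative coefficients, so it can be dropped from an upper bound. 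For the archimedean piece I would appeal to monotonicity of $\Gamma'/\Gamma$ on $(0,\infty)$ to bound $\Re\gamma_L'/\gamma_L(s_0)$ by $\tfrac{n_L}{2}\bigl[\tfrac{\Gamma'}{\Gamma}\bigl(\tfrac{s_0+1}{2}\bigr)-\log\pi\bigr]$, and then replace $(s_0+1)/2$ by its maximum value $(2+\alpha)/2$, attained at $r=1$. The leftover term $1/s_0 \le 1$ I would absorb into the same archimedean bracket by writing $1 \le \tfrac{n_L}{2}\cdot \tfrac{2}{n_0}$ via $n_L \ge n_0$, which accounts for the $2/n_0$ appearing inside $\omega(\alpha)$. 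Maxing the bracket against $0$ (to keep the expression non-negative when it would otherwise be negative) and then using $n_L \le \mathscr{Q}_0\Lo$ to bound $n_L/2$ by $\mathscr{Q}_0\Lo/2$ produces
\[
\sum_\rho \Re\frac{1}{s_0-\rho} \le \frac{1}{\alpha r} + \omega(\alpha)\Lo.
\]

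Multiplying through by $(1+\alpha)^2 r/\alpha$ and simplifying produces precisely $n(r;1) \le \bigl(\tfrac{1+\alpha}{\alpha}\bigr)^2(1+\alpha r \omega(\alpha)\Lo)$. The principal obstacle will be Step~2, namely running the Hadamard bookkeeping cleanly: the cancellation $\Re B_L + \sum_\rho \Re(1/\rho) = 0$ must be invoked carefully (it requires pairing $\rho$ with $1-\overline{\rho}$ so the conditionally convergent sum makes sense), and the numerical constants in the archimedean bound must be apportioned so as to land exactly on $\omega(\alpha)$ rather than something slightly larger. Everything after that is an elementary explicit estimate of the gamma factor at the real point $1+\alpha r$.
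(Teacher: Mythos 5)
Your proof is correct and follows essentially the same route as the paper: you take the real test point $s_0 = 1+\alpha r$, invoke the Hadamard-derived explicit formula for $\sum_\rho \Re(s_0-\rho)^{-1}$, drop the non-positive $\Re\zeta_L'/\zeta_L(s_0)$ term, bound the archimedean factor via monotonicity of $\Gamma'/\Gamma$, absorb the $1/s_0\le 1$ term using $1 \le (n_L/2)(2/n_0)$, cap the bracket at $0$ before applying $n_L\le\mathscr{Q}_0\Lo$, and finally extract $n(r;1)$ by lower-bounding $\Re(s_0-\rho)^{-1}\ge \alpha/((1+\alpha)^2 r)$ for zeros in $Z(r;1)$ while keeping the remaining (non-negative) contributions. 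The only cosmetic difference is that the paper routes through $Z((1+\alpha)r;s_0)$ as an intermediate set, whereas you bound $n(r;1)$ directly; these are the same estimate.
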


\begin{proof}
We let $0<r\le 1$ and $\alpha>0$ and set $s_0=1+\alpha r$. 
Putting together the classical explicit formula (see, e.g., \cite[p. 1442]{AK})
\begin{equation}\label{exp-for}
\sum_{\rho \in \mathcal{S}} \Re \frac 1{s_0-\rho }
= \frac12 \log d_L + \Re\Big( \frac1s_0 + \frac1{s_0-1}\Big) + \Re \frac{\gamma'_L}{\gamma_L}(s_0) + \Re \frac{\zeta'_L}{\zeta_L}(s_0) ,
\end{equation}
with the facts that
$
\Re \frac{\zeta'_L}{\zeta_L}(s_0) =
  \frac{\zeta'_L}{\zeta_L}(1+\alpha r) \le 0,
$
and 
\[
\frac{\gamma'_L}{\gamma_L}(s_0)
= \frac{(r_1+r_2)}{2}\frac{\Gamma'}{\Gamma} \Big( \frac{s_0}{2} \Big)
+\frac{r_2}2 \frac{\Gamma'}{\Gamma} \Big( \frac{s_0+1}{2} \Big)
-\frac{n_L}2 \log \pi
\le \frac{n_L}{2}\Big( \frac{\Gamma'}{\Gamma} \Big( \frac{2+\alpha}{2} \Big) -\log\pi \Big),
\]
yields
\[
\sum_{\rho \in \mathcal{S}} \Re \frac 1{s_0-\rho }
\le  \frac{1}{\alpha r}  + \frac12 \log d_L +  \frac{n_L}{2}\Big( \frac{\Gamma'}{\Gamma} \Big( \frac{2+\alpha}{2} \Big) -\log\pi \Big)  + 1.
\]
Note that
$$
\sum_{\rho \in \mathcal{S}} \Re \frac 1{s_0-\rho }
\ge \sum_{\rho \in Z((1+\alpha)r;s_0)} \Re \frac 1{s_0-\rho }\ge \frac{\alpha r}{( (1+\alpha) r)^2} n((1+\alpha)r; s_0) =\frac{\alpha }{ (1+\alpha)^2 r} n((1+\alpha)r; s_0).
$$
Observing that $Z(r;1)\subseteq Z((1+\alpha)r;1+\alpha r)= Z((1+\alpha)r;s_0)$, \eqref{counting-zero-bd-fixed} follows from 
\[
n(r;1) 
\le n((1+\alpha)r;s_0)
\le \frac{ (1+\alpha)^2 r}{\alpha } \Big(  \frac{1}{\alpha r}  + \frac12 \log d_L +  \frac{n_L}{2} \Big( \frac{\Gamma'}{\Gamma} \Big( \frac{2+\alpha}{2} \Big) -\log\pi \Big)  + 1 \Big) . \qedhere
\]
\end{proof}

In addition, we require the following result on sums over the zeros close to $1$.

\begin{corollary}\label{cor-zero-sums}
Assume \eqref{assumptions}, $\alpha>0, r>0$ and that $\frac{1}{r\Lo} \le 1$. If $n_0 \le n_L\le \mathscr{Q}_0 \Lo$, then 

\begin{equation}
\sum_{\frac{1}{r\Lo }\le|\rho  -1|\le 1}  \frac1{|\rho -1|^2}
 \le 
\Big( \Big( \frac{1+\alpha}{\alpha}\Big)^2  ( r^2 + 2r\alpha\omega(\alpha) )  - n(\frac1{r\Lo};1) r^2 \Big)\Lo^2,
\end{equation}
where $\omega(\alpha)$, depending on $n_0$ and $\mathscr{Q}_0$, is defined as in \eqref{def-omega}.
\end{corollary}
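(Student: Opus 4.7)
The proof strategy is Stieltjes integration by parts (Abel summation): convert the sum over zeros into an integral against the counting function $n(v;1)$, invoke the upper bound from Lemma~\ref{counting-zero}, and evaluate the resulting elementary integrals in $v$.

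Setting $a=1/(r\Lo)$, $N(v)=n(v;1)$, and $C=\bigl((1+\alpha)/\alpha\bigr)^2$, I would start from the identity $u^{-2}=1+2\int_u^1 v^{-3}\,dv$ (valid on $[a,1]$ by the hypothesis $\frac{1}{r\Lo}\le 1$) and exchange summation with integration to derive the Stieltjes identity
\[
\sum_{a\le|\rho-1|\le 1}\frac{1}{|\rho-1|^2} = N(1) - \frac{N(a)}{a^2} + 2\int_a^1 \frac{N(v)}{v^3}\,dv,
\]
up to a boundary contribution coming from zeros that lie exactly on $|\rho-1|=a$, which will be handled in the final step.

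Next, I would apply Lemma~\ref{counting-zero} to bound $N(v)\le F(v):=C(1+\alpha v\omega(\alpha)\Lo)$ in both $N(1)$ and the integrand, leaving the $-N(a)/a^2$ term intact. Using the elementary antiderivatives $\int v^{-3}\,dv=-\tfrac12 v^{-2}$ and $\int v^{-2}\,dv=-v^{-1}$, a direct computation gives
\[
F(1) + 2\int_a^1\frac{F(v)}{v^3}\,dv = \frac{C}{a^2} + \frac{2C\alpha\omega(\alpha)\Lo}{a} - C\alpha\omega(\alpha)\Lo.
\]
Substituting $1/a = r\Lo$ rewrites this as $C(r^2+2r\alpha\omega(\alpha))\Lo^2 - C\alpha\omega(\alpha)\Lo$; combining with the boundary term $-N(a)/a^2 = -n(a;1)\,r^2\Lo^2$ and discarding the negative lower-order $-C\alpha\omega(\alpha)\Lo$ yields precisely the stated bound.

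I expect the main technical subtlety to be the handling of the boundary at $v=a$: Stieltjes integration by parts most naturally produces $-n(a^-;1)/a^2$ rather than the $-n(a;1)/a^2$ appearing in the statement, the two agreeing unless $\zeta_L(s)$ has a zero on the circle $|\rho-1|=a$. In the generic case they coincide, and in the non-generic case the positive discrepancy is offset by the negative lower-order term $-C\alpha\omega(\alpha)\Lo$ that is discarded in the last step, so the stated inequality is preserved in all cases.
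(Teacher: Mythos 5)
Your argument is, up to the boundary discussion, the paper's own proof verbatim: Stieltjes integration by parts against $n(\cdot\,;1)$, the upper bound $n(v;1)\le C(1+\alpha v\omega(\alpha)\Lo)$ from Lemma~\ref{counting-zero} applied to $n(1;1)$ and to the integrand, the elementary antiderivatives, and finally discarding the non-positive term $-C\alpha\omega(\alpha)\Lo$, where $C=\bigl((1+\alpha)/\alpha\bigr)^2$. Your intermediate evaluation $F(1)+2\int_a^1 F(v)v^{-3}\,dv = C/a^2 + 2C\alpha\omega(\alpha)\Lo/a - C\alpha\omega(\alpha)\Lo$ is correct and agrees with the paper's \eqref{part2}.

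Where you go beyond the paper is in flagging that integration by parts over the closed interval $[a,1]$, which is what the left-inclusive sum $\sum_{a\le|\rho-1|\le 1}$ requires, produces $-n(a^-;1)/a^2$ rather than $-n(a;1)/a^2$. That observation is correct, and in fact the paper's own display \eqref{part1} makes the same silent identification without comment. However, your proposed resolution does not work: if $m\ge 1$ zeros lie exactly on the circle $|\rho-1|=a$, the discrepancy $\bigl(n(a;1)-n(a^-;1)\bigr)/a^2 = m r^2\Lo^2$ is of order $\Lo^2$, whereas the discarded term $C\alpha\omega(\alpha)\Lo$ is only of order $\Lo$, so the latter cannot absorb the former. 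The honest resolution is that this is a measure-zero degeneracy which never occurs in the two places the corollary is invoked (Propositions~\ref{prop-explicit-ineq-zeros} and~\ref{prop-exceptional}): there, $r$ is chosen ($r=2$ and $r=R_1$) so that, by the zero-free region \eqref{zfr-enlarged-R1} and the separate handling of $\beta_1$ and $\beta'$, the boundary circle carries no zero and $n(a;1)=n(a^-;1)$. So your algebra and structure match the paper exactly, but the closing offset claim is a genuine (if harmless, and shared with the source) gap; stating the corollary with $n\bigl(\tfrac{1}{r\Lo}^-;1\bigr)$ or noting the genericity assumption would be the clean fix.
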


\begin{proof}
We start with 
\begin{equation}\label{part1}
\sum_{\frac{1}{r\Lo }\le|\rho  -1|\le 1}  \frac1{|\rho -1|^2}
=
\int_{ \frac{1}{r\Lo} }^1\frac{1}{u^2} d n(u;1) 
= n(1;1) - n\Big(\frac1{r\Lo};1\Big) (r\Lo)^2 + 2\int_{ \frac{1}{r\Lo} }^1\frac{n(u;1) }{u^3} du . 
\end{equation}
It follows from Lemma \ref{counting-zero} that
\begin{equation}\label{part2}
n(1;1)\le \Big(\frac{1+\alpha}{\alpha}\Big)^2(\alpha\omega(\alpha)\Lo +1  ),
\text{and} 
\int_{\frac{1}{r\Lo}}^1  \frac{   1+  \alpha u \omega(\alpha)  \Lo }{u^3} du   
=  \Big(\frac{r^2}{2}+\alpha\omega(\alpha)r\Big)\Lo^2 - \alpha\omega(\alpha) \Lo-\frac{1}{2}.
\end{equation}
We conclude by putting together \eqref{part1} and \eqref{part2}. 
\end{proof}

We now control non-trivial zeros $\rho =\beta+ i\gamma$ of $\zeta_L(s)$ that are not equal to $\beta_1$, $\beta'$, or $1-\beta_1$.

\begin{proposition}\label{prop-explicit-ineq-zeros}
Let $k(s)$ be the kernel  given as in \eqref{def-k}, with $ x =  d_L^{c_4}$, and $\mathcal{S}_C$ be the sum  given as in \eqref{def-mathcalSC}. Let $R_1$ be defined as in \eqref{zfr-enlarged-R1}. 
If $n_0\le n_L \le \mathscr{Q}_0\Lo$, then
\begin{align*}
\frac{|G|}{|C|}\mathcal{S}_C\ge &  
(1-\delta(\beta_1)) (\theta-1)^2 c_4^2 \Lo^2
+  \delta(\beta_1) \phi_\theta\Big(  c_4(1-\beta_1) \Lo\Big) c_4^2 \Lo^2\\
&- (1-\delta(\beta_1))\delta(\beta')|k(\beta')|
-c_6 \Lo 
- \sideset{}{'}\sum_{\frac{1}{R_1\Lo }\le|\rho  -1|\le 1} |k(\rho )| \\  
&- \cQ_0 \alpha_3 (\theta-1) c_4 \Lo^3 e^{-c_4 \Lo }
- \delta(\beta_1)  \frac{1}{( 1- \frac{1}{2\Lo} )^2} e^{ - c_4(2 \Lo - 1)}
-  c_5\Lo e^{-2 c_4 \Lo },
\end{align*}
where $\delta(\beta_1)=1$ (resp., $\delta(\beta')=1$) if the exceptional zero $\beta_1$ (resp., real zero $\beta'$) exists for $\zeta_L(s)$, $\delta(\beta_1)=0$ (resp., $\delta(\beta')=0$) otherwise, $\phi_{\theta}$, $\alpha_3$, $c_5$, and $c_6$ are defined in \eqref{k-k-bd}, \eqref{alpha3}, \eqref{c15}, and \eqref{c13}, respectively, and, as later, the primed sum is over non-trivial zeros $\rho\neq 1-\beta_1$ of $\zeta_L(s)$.  
\end{proposition}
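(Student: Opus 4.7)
Building on Proposition \ref{prop-explicit-ineq}, I would first substitute $x = d_L^{c_4} = e^{c_4\Lo}$, so $\log x = c_4 \Lo$. Under this substitution the three error terms of Proposition \ref{prop-explicit-ineq} involving $x^{-1}$, $x^{-2}$, and $x^{-2(1-1/(2\Lo))}$ convert directly into the exponential-decay terms appearing in the statement. For the term $\alpha_3 (\theta-1) \frac{|G|}{|C|} \Lo \frac{\log x}{x}$, I would bound $\frac{|G|}{|C|} \le n_L \le \cQ_0 \Lo$ (using the Minkowski-bound hypothesis $n_L \le \cQ_0 \Lo$) to produce the claimed $\cQ_0 \alpha_3 (\theta-1) c_4 \Lo^3 e^{-c_4 \Lo}$.

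The main analytic step is to split the sum
\[
\sum_{\rho \in \mathcal{S}\setminus \{\beta_1,\beta', 1-\beta_1\}} |k(\rho)|
= \sideset{}{'}\sum_{\frac{1}{R_1\Lo} \le |\rho - 1| \le 1} |k(\rho)| + \sideset{}{'}\sum_{|\rho - 1| > 1} |k(\rho)|.
\]
The enlarged zero-free region \eqref{zfr-enlarged-R1} guarantees that no non-trivial zero $\rho\ne\beta_1$ lies in $|\rho-1| < \frac{1}{R_1\Lo}$, which justifies the lower bound on the inner annulus. The zeros $\beta_1$ and $\beta'$ both satisfy $|\rho - 1| < \frac{1}{R_1\Lo}$ by construction (for $\beta'$, because $1-\beta'\in[\tfrac{1}{R_0\Lo},\tfrac{1}{R_1\Lo})$), so they lie outside the annulus automatically, while $1-\beta_1$ is removed explicitly by the prime on the annular sum.

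For the tail with $|\rho-1|>1$, I would apply \eqref{bnd-kgal}: any non-trivial zero $\rho=\beta+i\gamma$ satisfies $0<\beta<1$, so $x^{\theta(\beta-1)}+x^{\beta-1}\le 2$ and hence $|k(\rho)| \le 4/|\rho-1|^2$. I would then split by $|\gamma|\le 1$ versus $|\gamma|>1$: the compact piece is bounded by $4\,N_L(1)$ via \eqref{zero-density}, while the large-$|\gamma|$ piece is handled by integration by parts against $dN_L(t)/t^2$. The leading $\frac{t}{\pi}\log(d_L(t/(2\pi e))^{n_L})$ contribution from \eqref{zero-density} integrates to $O(\Lo)$ under $n_L\le\cQ_0\Lo$, and so do the error terms; assembling everything produces the claimed bound $c_6 \Lo$.

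The main obstacle is extracting an explicit, reasonably small value of $c_6$: one must track the constants of \eqref{zero-density} through the integration by parts, evaluate the elementary integrals $\int_1^\infty t^{-2}\,dt$ and $\int_1^\infty (\log t)\, t^{-2}\, dt$, and consolidate the result (together with the $|\gamma|\le 1$ contribution) into a single closed-form constant depending on $n_0$ and $\cQ_0$. With $c_6$ in hand, inserting the decomposition back into Proposition \ref{prop-explicit-ineq} and collecting terms yields precisely the inequality asserted.
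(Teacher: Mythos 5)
Your proposal is correct and follows essentially the same structure as the paper: substituting $x=e^{c_4\Lo}$ into Proposition~\ref{prop-explicit-ineq}, bounding $\frac{|G|}{|C|}\le n_L\le\cQ_0\Lo$, using the enlarged region \eqref{zfr-enlarged-R1} to justify that only $\beta_1$ and $\beta'$ lie inside $|\rho-1|<\frac{1}{R_1\Lo}$, and bounding the far zeros via $|k(\rho)|\le 4/|\rho-1|^2$ and the zero-density estimate \eqref{zero-density}.

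The one place you diverge from the paper is in how you organize the tail $\sum_{|\rho-1|>1}|k(\rho)|$. You split by $|\gamma|\le1$ versus $|\gamma|>1$ and integrate by parts against $dN_L(t)$, whereas the paper integrates by parts directly against $dn(r;1)$ (the count by distance from $1$) and then uses $n(r;1)\le N_L(r)$. These are in fact equivalent: in your version the $4N_L(1)$ from the compact piece exactly cancels the $-4N_L(1)$ boundary term arising from the integration by parts over $|\gamma|>1$, leaving $8\int_1^\infty N_L(t)t^{-3}\,dt$, which is the same integral the paper arrives at. So you recover the same $c_6$ after tracking the constants in \eqref{zero-density}. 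Two tiny points worth tightening: the hypothesis $n_L\le\cQ_0\Lo$ is a stated assumption of the proposition (with $\cQ_0$ as in \eqref{def_Q}), not literally Minkowski's bound, so the parenthetical attribution is slightly misplaced; and you should make the boundary-term cancellation explicit when you carry out the integration by parts, as otherwise the two $N_L(1)$ contributions look like they could produce a worse constant.
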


\begin{proof}
We shall note that if $\rho =\beta +i\gamma$ is such that $|\rho  -1|< \frac{1}{R_1\Lo}$, then  $1-\beta < \frac{1}{R_1\Lo}$ and $|\gamma|<\frac{1}{R_1\Lo}$. 
Hence, \eqref{zfr-enlarged-R1} forces that  $\rho$ is either  $\beta_1$ or $\beta'$ if $|\rho  -1|< \frac{1}{R_1\Lo}$. 
Thus, we can consider the splitting
\begin{equation}\label{sum-non-beta1}
\sum_{\rho \in \mathcal{S} \backslash \{ \beta_1,\beta', 1-\beta_1\}} |k(\rho )| 
= \sideset{}{'}\sum_{\frac{1}{R_1\Lo }\le|\rho  -1|\le 1} |k(\rho )|  +\sum_{|\rho  -1|> 1} |k(\rho )|.
\end{equation}
To bound $\sum_{|\rho  -1|> 1} |k(\rho )|$, we appeal to \eqref{bnd-kgal} to bound $k$: 
\begin{equation}\label{bnd-krho>1}
|k(\rho )|\le \frac{(x^{\theta(\beta-1)}+ x^{\beta-1})^2}{|\rho -1|^2} 
\le \frac{4  }{|\rho -1|^2}.
\end{equation}
Recalling that $n(r;1)$ denotes the number of non-trivial zeros $\rho$ of  $\zeta_L(s)$ such that  $|\rho -1| \leq r$, we notice that $n(r;1) \le  N_L (r)$. 
Using \eqref{zero-density}, we derive
\[
\sum_{|\rho -1|>1} |k(\rho )| 
\le 4 \int_{1}^{\infty} \frac{1}{r^2}d n(r;1) 
\le  8  \int_{1}^{\infty} \frac{n(r;1)}{r^3}  dr
\le c_6 \Lo,
\]
where 
\begin{equation}\label{c13}
c_6=8\Big(\frac{1}{\pi}+ 0.148 + \cQ_0 \max\Big\{ 0, \int_{1}^{\infty} \frac{\frac{r}{\pi} \log( \frac{r}{2\pi e})  +0.296 \log r +3.971 + \frac{3.969}{n_0}}{r^3}dr \Big\}\Big).
\end{equation}
We conclude by putting together Proposition \ref{prop-explicit-ineq} with the above and the facts that $\frac{|G|}{|C|} \le n_L $ and $x=e^{c_4\Lo}$.
\end{proof}
Now, it remains to bound $ \sum'_{\frac{1}{R_1\Lo }\le|\rho  -1|\le 1} |k(\rho )|$. 
We shall proceed with the argument by considering cases depending on the existence of the exceptional zero $\beta_1$ and the distance of the exceptional zero $\beta_1$, if it exists, to the $1$-line.

\subsection{Non-exceptional case}

\begin{proposition}
Let $\mathcal{S}_C$ be the  sum  given as in \eqref{def-mathcalSC}. Assume 
there is no exceptional zero $\beta_1$ in $(1-\frac{1}{R_0\Lo},1)$. If $n_0\le n_L \le \mathscr{Q}_0\Lo$ and $\Lo\ge \Lo_0$, then
\begin{equation}\label{solving_c4}
\Lo^{-2}\frac{|G|}{|C|}  \mathcal{S}_C \ge (\theta-1)^2 c_4^2 - R_0 ^2 e^{-\frac{2c_4}{R_0}} 
-c_7(c_4) e^{\frac{-2 c_4}{29.57 (1+\Delta_0(1))}} - \mathcal{E}_0(c_4), 
\end{equation}
where 
\begin{equation}\label{def-epsilon0}
\mathcal{E}_0(t)= \frac{c_6}{\Lo_0} +  \mathscr{Q}_0 \alpha_3 (\theta-1) t \Lo_0 e^{-t \Lo_0 } 
+ \frac{c_5}{\Lo_0} e^{-2 t \Lo_0 } ,
\end{equation}
and  $c_5$,  $c_6$, and $c_7$ are defined as in  \eqref{c15}, \eqref{c13}, and \eqref{c14*}, respectively. 
\end{proposition}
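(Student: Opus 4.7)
The plan is to specialize Proposition \ref{prop-explicit-ineq-zeros} to the non-exceptional case by setting $\delta(\beta_1)=0$, and then to control the remaining sum over the non-trivial zeros of $\zeta_L(s)$ close to $s=1$ by combining the classical zero-free region \eqref{zfr} with the counting estimate in Corollary \ref{cor-zero-sums}.

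With $\delta(\beta_1)=0$, Proposition \ref{prop-explicit-ineq-zeros} gives a lower bound whose main term is $(\theta-1)^2 c_4^2\Lo^2$. For the remaining primed sum I would apply the kernel estimate \eqref{bnd-kgal} in the form $|k(\rho)|\le 4 x^{2(\beta-1)}/|\rho-1|^2$ (valid because $x^{\theta(\beta-1)}+x^{\beta-1}\le 2 x^{\beta-1}$ when $\beta<1$). Since $\beta_1$ does not exist, the zero-free region \eqref{zfr} applies to every non-trivial zero; restricting the sum to $|\gamma|\le 1$ so that $\log(|\gamma|+2)\le\log 3$, every such $\rho$ satisfies $\beta\le 1-\tfrac{1}{29.57\,\Lo(1+\Delta_0(1))}$, hence $x^{2(\beta-1)}\le\exp\bigl(-\tfrac{2c_4}{29.57(1+\Delta_0(1))}\bigr)$. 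The factor $\sum|\rho-1|^{-2}$ is then majorised by a multiple of $\Lo^2$ via Corollary \ref{cor-zero-sums} with a convenient choice of $(r,\alpha)$, so the whole primed sum is bounded by $c_7(c_4)\Lo^2\exp\bigl(-\tfrac{2c_4}{29.57(1+\Delta_0(1))}\bigr)$, where $c_7(c_4)$ is the packaging constant to be defined in \eqref{c14*}.

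For the remaining terms I would divide the whole inequality by $\Lo^2$ and use $\Lo\ge\Lo_0$. The piece $c_6\Lo$ contributes at most $c_6/\Lo_0$; the archimedean error $\mathscr{Q}_0\alpha_3(\theta-1)c_4\Lo^3 e^{-c_4\Lo}$, after division, becomes $\mathscr{Q}_0\alpha_3(\theta-1)c_4\Lo e^{-c_4\Lo}$ and is bounded by its value at $\Lo=\Lo_0$ using the monotonicity of $L\mapsto Le^{-c_4 L}$ for $L\ge 1/c_4$; similarly $c_5 e^{-2c_4\Lo}/\Lo$ is bounded by $c_5 e^{-2c_4\Lo_0}/\Lo_0$. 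The possible $\delta(\beta')|k(\beta')|$ contribution is handled via \eqref{bnd-ksigma<1} together with $1-\beta'\ge 1/(R_0\Lo)$, which yields $|k(\beta')|/\Lo^2\le R_0^2\exp\bigl(-\tfrac{2c_4}{R_0}\bigr)$, and a further reduction to $\Lo=\Lo_0$ produces the term $\frac{R_0^2}{\Lo_0^2}\exp\bigl(-\tfrac{2c_4\Lo_0}{R_0}\bigr)$ that appears in \eqref{def-epsilon0}. Assembling the pieces yields \eqref{solving_c4}.

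The main obstacle will be the explicit calibration of $c_7(c_4)$: one must choose the parameter $\alpha$ in Corollary \ref{cor-zero-sums}, as well as the value of $R_1$ furnished by Proposition \ref{lemma-zfr-enlarged}, so that the constant packaged into \eqref{c14*} is small enough that $(\theta-1)^2 c_4^2-c_7(c_4)\exp\bigl(-\tfrac{2c_4}{29.57(1+\Delta_0(1))}\bigr)-\mathcal{E}_0(c_4)$ is positive at the targeted value of $c_4$. This calibration, rather than the analytic content of the estimates themselves, is what ultimately drives the size of the admissible $B=2\theta c_4$ in Theorem \ref{mainthm}.
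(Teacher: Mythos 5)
Your overall strategy is the paper's: set $\delta(\beta_1)=0$ in Proposition \ref{prop-explicit-ineq-zeros} (with $R_1=2$), bound the low-lying primed sum via the zero-free region of \cite[Proposition 6.1]{AK} together with Corollary \ref{cor-zero-sums}, and push the secondary terms down to $\Lo_0$ by monotonicity. Two points, however, do not go through as written.

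First, the kernel bound. You replace $\bigl(1+x^{(\theta-1)(\beta-1)}\bigr)^2$ by $4$ (i.e. $x^{\theta(\beta-1)}+x^{\beta-1}\le 2x^{\beta-1}$). That is correct but cruder than what the paper does: it keeps $\bigl(1+x^{(\theta-1)(\beta-1)}\bigr)^2$ and evaluates it at the edge of the zero-free region, producing the factor $\bigl(1+e^{(1-\theta)c_4/(29.57(1+\Delta_0(1)))}\bigr)^2$ that actually appears in \eqref{c14*}. Your crude version would give $c_7 = 16(1+\alpha\omega(\alpha))\bigl(\tfrac{1+\alpha}{\alpha}\bigr)^2$, not the expression \eqref{c14*} the proposition references, so you would not recover the stated constant without redoing the downstream numerics.

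Second, the $\beta'$ term. From $1-\beta'\ge 1/(R_0\Lo)$ and $|\beta'-1|^{-2}\le (R_0\Lo)^2$ you correctly get $|k(\beta')|/\Lo^2\le R_0^2\,e^{-2c_4/R_0}$, but this bound is $\Lo$-independent, so the claimed "further reduction to $\Lo=\Lo_0$" yielding $\tfrac{R_0^2}{\Lo_0^2}e^{-2c_4\Lo_0/R_0}$ is not a legitimate step — there is nothing left to reduce, and the target quantity is much smaller. The paper at this point writes $|k(\beta')|\le \tfrac{1}{(\beta'-1)^2}x^{2(\beta'-1)}\le R_0^2 x^{-2/R_0}$, i.e.\ an $\Lo$-free prefactor and exponent $x^{-2/R_0}=e^{-2c_4\Lo/R_0}$, and it is this form that divides by $\Lo^2$ and then monotonizes to $\tfrac{R_0^2}{\Lo_0^2}e^{-2c_4\Lo_0/R_0}$. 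You should re-examine this inequality rather than asserting a reduction that your own intermediate bound does not support. Finally, a small red herring: Proposition \ref{lemma-zfr-enlarged} only improves $R_1$ under the hypothesis that $\beta_1$ exists; in the non-exceptional case treated here the paper simply uses $R_1=2$, so invoking it is unnecessary.
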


\begin{proof}
We begin by recalling that 
$\beta' \le 1- \frac1{R_0\Lo}$ (if exists) and that for any non-trivial zero $\rho =\beta+i\gamma\neq \beta'$, with $|\gamma|\le 1$,  the zero-free region \cite[Proposition 6.1]{AK} gives
$$
1-\beta >(29.57 \log (d_L \tau^{n_L} ) )^{-1}\ge  (29.57\Lo  (1+\delta_L(3))  )^{-1} \ge  (29.57\Lo  (1+\Delta_0(1))  )^{-1},
$$
where $\delta_L$ and $\Delta_0$ are defined in \eqref{def-delta} and \eqref{def-Delta}, respectively, and the last inequality is due to \eqref{bnd-delta}. 
Therefore, by \eqref{bnd-ksigma<1}, we have
$$
|k(\beta')|\le   \frac{1}{(\beta'-1)^2}  x^{2(\beta'-1)} \le (R_0 \Lo)^2 x^{-\frac{2}{R_0\Lo }},
$$
and by using the first bound of \eqref{bnd-kgal}, we have for the other zeros that 
$$
|k(\rho )|\le \frac{x^{2(\beta -1)}  (1 + x^{(\theta-1)(\beta-1) } )^2  }{|\rho -1|^2}
 \le \frac{x^{-2  (29.57\Lo  (1+\Delta_0(1))  )^{-1}} (1 + x^{ (1-\theta) (29.57\Lo  (1+\Delta_0(1))  )^{-1} } )^2 }{|\rho -1|^2}.
$$
By Corollary \ref{cor-zero-sums}, putting $x=e^{c_4\Lo}$, we then derive  
\begin{equation}\label{sum<1'}
 (1-\delta(\beta_1))\delta(\beta')|k(\beta')| +\sideset{}{'}\sum_{\frac1{2\Lo}\le |\rho  -1|\le 1} |k(\rho )|
\le  (R_0\Lo )^2 e^{-\frac{2c_4}{R_0}} + c_7(c_4) \Lo^2 e^{\frac{-2 c_4}{29.57 (1+\Delta_0(1))}},
\end{equation}
where 
\begin{equation}\label{c14*}
c_7(t)
=  4 (1+\alpha\omega(\alpha)) \Big( \frac{1+\alpha}{\alpha}\Big)^2 \Big(1+ e^{ \frac{(1-\theta)t}{29.57 (1+\Delta_0(1))  } }  \Big)^2    ,
\end{equation}
and $\omega(\alpha)$ is defined as in \eqref{def-omega}.  For $ n_L\le \mathscr{Q}_0 \Lo$ , we apply Proposition \ref{prop-explicit-ineq-zeros} with $R_1=2$ and \eqref{sum<1'} to obtain
\begin{align*}
\frac{|G|}{|C|}\mathcal{S}_C
\ge &  
 (\theta-1)^2 c_4^2 \Lo^2
-c_6 \Lo 
- (R_0 \Lo)^2 e^{-\frac{2c_4}{R_0}} - c_7(c_4) \Lo^2 e^{\frac{-2 c_4}{29.57 (1+\Delta_0(1))}} \\ 
&-  \mathscr{Q}_0 \alpha_3 (\theta-1) c_4 \Lo^3 e^{-c_4 \Lo }
-  c_5\Lo e^{-2 c_4 \Lo }.
\end{align*}
We conclude with the fact that $\Lo\ge \Lo_0$.
\end{proof}

\subsection{Exceptional case}\label{small-exp-zero}

Assume that the exceptional zero $\beta_1$ of $\zeta_L(s)$ presents such that $\beta_1\ge 1-\frac{1}{R_0\Lo}$. 
Following Proposition \ref{lemma-zfr-enlarged}, we take  $R_0=20$ and $R_1=1.24$. We recall $R=29.57$.  
We let $\varepsilon_1>0$, $\sigma_1\ge 1$, and $\eta\in (0,1]$ be parameters (to be chosen later) to compute $c_1^{\prime}(\varepsilon_1,\sigma_1,\eta)$ and $c_2^{\prime}(\varepsilon_1,\sigma_1,\eta)$ defined in \eqref{def-C7-C8}. These parameters will be chosen to make $c_4$ and thus $B$ as small as possible.

\begin{proposition}\label{prop-exceptional}
Let $\mathcal{S}_C$ be the  sum  given as in \eqref{def-mathcalSC}. Assume that the exceptional zero $\beta_1$ of $\zeta_L(s)$ exists, that is $\beta_1\ge 1-\frac{1}{R_0\Lo}$. Let $\eta\in(0,1]$ satisfying
\begin{equation}\label{cond-eta}
\eta \ge  c_2^{\prime}(\varepsilon_1,\sigma_1,\eta) \frac{ \log \Big( \frac{c_1^{\prime}(\varepsilon_1,\sigma_1,\eta) }{(1-\beta_1) \Lo} \Big)}{\Lo} 
\end{equation}
(with $c_1^{\prime}(\varepsilon_1,\sigma_1,\eta)$ and $c_2^{\prime}(\varepsilon_1,\sigma_1,\eta)$ defined in \eqref{def-C7-C8}).
If $ n_0 \le n_L \le \mathscr{Q}_0 \Lo$ and $\Lo\ge \Lo_0$,
then
\begin{equation}\label{L8.3}
\begin{split} 
 \Lo^{-2}\frac{|G|}{|C|}\mathcal{S}_C
\ge &  
\phi_\theta( (1-\beta_1) c_4 \Lo) c_4^2 
- c_{11} ((1-\beta_1) \Lo  )^{2 c_4 c_8 }
- \frac{c_6}{ \Lo} \\
&- \cQ_0 \alpha_3 (\theta-1) c_4 \Lo e^{-c_4 \Lo }
- \frac{1}{(\Lo - \frac{1}{2} )^2} e^{ - c_4(2 \Lo - 1)}
- \frac{ c_5}{\Lo } e^{-2 c_4 \Lo},
\end{split} 
\end{equation}
where  $\alpha_3$, $c_5$, $c_6$, $c_8$, and  $c_{11}$ are defined in \eqref{alpha3}, \eqref{c15}, \eqref{c13}, \eqref{c12}, and \eqref{c14}, respectively.
\end{proposition}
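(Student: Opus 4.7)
The plan is to start from Proposition \ref{prop-explicit-ineq-zeros} specialised to the exceptional case $\delta(\beta_1)=1$. Since the hypothesis says $\beta_1 \ge 1 - 1/(R_0\Lo)$ with $R_0 = 20 \ge 3.5$, Proposition \ref{lemma-zfr-enlarged} allows us to take $R_1 = 1.24$ in the enlarged zero-free region. The term $(1-\delta(\beta_1))\delta(\beta')|k(\beta')|$ vanishes, so the task reduces to bounding the ``medium-range'' sum
$$
\sideset{}{'}\sum_{\frac{1}{R_1\Lo}\le |\rho-1|\le 1} |k(\rho)|,
$$
where the prime excludes $1-\beta_1$ (and the summation range already excludes $\beta_1$ via $R_1$).

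For any non-trivial zero $\rho = \beta+it$ appearing in this annulus, the inequality $|t|\le |\rho-1|\le 1$ places $\rho$ in the range where the second part of Theorem \ref{thm-DH-all} applies. Setting $c_8 := c_2'(\varepsilon_1,\sigma_1,\eta)$, Theorem \ref{thm-DH-all} yields either $\beta \le 1-\eta$ or
$$
\beta \le 1 - c_8 \frac{\log\bigl(c_1'/((1-\beta_1)\Lo)\bigr)}{\Lo}.
$$
Hypothesis \eqref{cond-eta} is precisely the statement that $1-\eta \le 1 - c_8 \log(c_1'/((1-\beta_1)\Lo))/\Lo$, so both branches collapse and we obtain this upper bound on $\beta$ \emph{uniformly} over all zeros in the annulus. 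This is the key input that encodes the Deuring-Heilbronn repulsion.

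Next, using \eqref{bnd-kgal} with $\sigma = \beta < 1$ (so that $x^{\theta(\beta-1)} \le x^{\beta-1}$), we get the pointwise kernel bound $|k(\rho)| \le 4 x^{2(\beta-1)} / |\rho-1|^2$. Substituting $x = e^{c_4\Lo}$ and the uniform bound above yields
$$
x^{2(\beta-1)} \le \Bigl(\frac{(1-\beta_1)\Lo}{c_1'}\Bigr)^{2 c_4 c_8},
$$
independently of $\rho$. Pulling this factor out of the sum and applying Corollary \ref{cor-zero-sums} with $r = 1/R_1$ bounds $\sum' 1/|\rho-1|^2$ by a quantity of order $\Lo^2$, so the whole medium-range contribution is at most $c_{11}((1-\beta_1)\Lo)^{2 c_4 c_8}\Lo^2$, where $c_{11}$ (to be defined in \eqref{c14}) packages the constants from \eqref{bnd-kgal}, Corollary \ref{cor-zero-sums} and $c_1'$.

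Feeding this estimate into Proposition \ref{prop-explicit-ineq-zeros} and dividing through by $\Lo^2$ produces \eqref{L8.3}: the leading term $\phi_\theta(c_4(1-\beta_1)\Lo)c_4^2\Lo^2$ becomes $\phi_\theta(c_4(1-\beta_1)\Lo)c_4^2$, the factor $(1-1/(2\Lo))^{-2}/\Lo^2$ simplifies to $(\Lo-1/2)^{-2}$, and the remaining error terms $c_6\Lo$, $\cQ_0\alpha_3(\theta-1)c_4\Lo^3 e^{-c_4\Lo}$ and $c_5\Lo e^{-2c_4\Lo}$ convert directly. The main obstacle is verifying rigorously that condition \eqref{cond-eta} does force the stronger Deuring-Heilbronn branch in both cases of Theorem \ref{thm-DH-all}, and then tracking the constant $c_{11}$ explicitly through the chain of inequalities (kernel bound, uniform Deuring-Heilbronn bound, Corollary \ref{cor-zero-sums}) so that the exponent $2 c_4 c_8$ comes out exactly; the rest is bookkeeping in powers of $\Lo$.
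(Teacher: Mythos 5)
Your proposal misses the key balancing step of the paper's argument, and as written it would produce a quantitatively different (and much weaker) estimate that does not match the constants referenced in the Proposition.

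Concretely, you set $c_8 := c_2'(\varepsilon_1,\sigma_1,\eta)$ and propose to absorb the factor $(1/c_1')^{2c_4c_8}$ coming from the Deuring--Heilbronn bound into $c_{11}$. But the Proposition refers explicitly to $c_8$ from \eqref{c12} and $c_{11}$ from \eqref{c14}, and neither matches what your argument produces. In \eqref{c12}, $c_8$ is a harmonic-mean-type quantity, $c_8 = \big(1/c_2' + 29.57(1+\Delta_0(1))\log(1/c_1')\big)^{-1}$, not $c_2'$; and \eqref{c14} contains no $(1/c_1')$ factor. More importantly, absorbing $(1/c_1')^{2c_4c_8}$ into $c_{11}$ is not merely a cosmetic difference: with $c_1'$ of size $\sim 10^{-5}$ and $2c_4c_8 \approx 1$, this factor is $\sim 10^5$, which destroys the eventual positivity of \eqref{L8.3} in the ``medium'' regime where $(1-\beta_1)\Lo$ is near $1/R_0$.

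The missing idea is the two-case dichotomy on the size of $(1-\beta_1)\Lo$ relative to $c_1'^a$ (for a free parameter $a>1$), used to interpolate between the Deuring--Heilbronn repulsion and the standard zero-free region. When $(1-\beta_1)\Lo > c_1'$, the argument $\log\!\big(c_1'/((1-\beta_1)\Lo)\big)$ in Theorem~\ref{thm-DH-all} is negative, so your ``uniform bound'' $\beta\le 1 - c_2'\log(\cdot)/\Lo$ degenerates to $\beta< 1+\text{(something positive)}$, which is vacuous. The paper handles this by invoking the zero-free region \cite[Proposition 6.1]{AK} in the range $(1-\beta_1)\Lo > c_1'^a$ (case (b)), rewriting $1-\beta > (29.57(1+\Delta_0(1))\Lo)^{-1}$ as $1-\beta \ge c_{10}\log\!\big(1/((1-\beta_1)\Lo)\big)/\Lo$ with $c_{10}= \big(a\cdot 29.57(1+\Delta_0(1))\log(1/c_1')\big)^{-1}$, and then choosing $a$ so that $c_9 = c_{10}$ (where $c_9 = c_2'(1-1/a)$ comes from case (a)). This balance both removes the $c_1'$ from the argument of the logarithm (leaving $\log\!\big(1/((1-\beta_1)\Lo)\big)$, so the bound has the clean form $((1-\beta_1)\Lo)^{2c_4c_8}$) and yields the optimized $c_8$ of \eqref{c12}. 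Two further minor discrepancies: the paper keeps the tighter kernel factor $(1+e^{-(\theta-1)c_4c_8\log R_0})^2$ rather than the crude $4$ from \eqref{bnd-kgal}, and applies Corollary~\ref{cor-zero-sums} with $n(\frac{1}{R_1\Lo};1)=1$ to account for $\beta_1$ lying inside the excluded disk.
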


\begin{proof} 
We start by estimating the sum over low-lying zeros $\sum'_{\frac{1}{R_1\Lo}\le |\rho  -1|\le 1} |k(\rho )|$. Let $\rho = \beta+ it$ denote a non-exceptional zero of $\zeta_L(s)$ such that $\frac{1}{R_1\Lo}\le |\rho  -1|\le 1$. We shall  consider the following two situations:
\begin{enumerate}
\item[(a)] We first  suppose that  $(1-\beta_1)\Lo \le c_1^{\prime}(\varepsilon_1,\sigma_1,\eta)^a$. Applying Theorem \ref{thm-DH-all}, we have either  $\beta\le 1-\eta$ or 
$$
1-\beta    \ge  c_2^{\prime}(\varepsilon_1,\sigma_1,\eta) \frac{ \log \Big( \frac{c_1^{\prime}(\varepsilon_1,\sigma_1,\eta)}{(1-\beta_1) \Lo } \Big)}{\Lo }  \ge  c_9 \frac{ \log\Big( \frac1{(1-\beta_1)\Lo  }\Big)}{\Lo },
$$
where
$$
c_9  = c_2^{\prime}(\varepsilon_1,\sigma_1,\eta) \Big( 1 -\frac{1}{a} \Big).
$$
\item[(b)] Secondly, we consider the situation that $(1- \beta_1)\Lo  > c_1^{\prime}(\varepsilon_1,\sigma_1,\eta)^a  $. By \cite[Proposition 6.1]{AK} and \eqref{bnd-delta}, we know that 
$$
1-\beta >(29.57 \log (d_L \tau^{n_L} ) )^{-1}\ge  (29.57\Lo  (1+\Delta_0(1))  )^{-1}
$$
as $\tau =|t|+2$ and $|t|\le 1$.
Thus, by choosing
$$
c_{10} = \frac{1}{ a \cdot 29.57(1+\Delta_0(1)) \log ( 1/ c_1^{\prime}(\varepsilon_1,\sigma_1,\eta))}, 
$$
we have
$$
1-\beta \ge c_{10} \frac{1}{  \Lo } \log   \Big( \frac{1}{c_1^{\prime}(\varepsilon_1,\sigma_1,\eta)^a }  \Big) \ge  c_{10} \frac{ \log\Big( \frac1{(1-\beta_1)\Lo  }\Big)}{\Lo }.
$$
\end{enumerate}

To summarise, for any non-exceptional zero $\rho =\beta+it$ such that  $\frac{1}{R_1\Lo}\le |\rho  -1|\le 1$, assuming \eqref{cond-eta}, we have
\begin{equation}
1-\beta \ge  D:= c_8 \frac{ \log\Big( \frac1{(1-\beta_1)\Lo } \Big)}{  \Lo },\ 
\text{with}\ 
c_8= \min \{c_9,c_{10} \}.
\end{equation}
We chose $a$ such that $  c_9 = c_{10}$, i.e.
\begin{equation}\label{def-a}
a = 1+\frac{1}{  29.57(1+\Delta_0(1))c_2^{\prime}(\varepsilon_1,\sigma_1,\eta)  \log ( 1/ c_1^{\prime}(\varepsilon_1,\sigma_1,\eta) )} .
\end{equation}
Thus, $c_8$ is given by
\begin{equation}\label{c12}
c_8 =
\frac{1}{ 1/c_2^{\prime}(\varepsilon_1,\sigma_1,\eta) + 29.57(1+\Delta_0(1)) \log ( 1/ c_1^{\prime}(\varepsilon_1,\sigma_1,\eta))}.
\end{equation}
Applying the bounds \eqref{bnd-kgal} on $k$ with $1-\beta\ge D$ gives
\begin{equation}\label{k-bd}
|k(\rho )|\le \frac{x^{2(\beta -1)}  (1 + x^{(\theta-1)(\beta-1) } )^2  }{|\rho -1|^2} 
\le \frac{x^{-2D} (1 + x^{ -(\theta-1)D } )^2 }{|\rho -1|^2} .
\end{equation}
Recalling that we set $x=d_L^{c_4}$, and by the definition of the exceptional zero with respect to the zero-free region \eqref{zfr-R0}, that $D  \ge c_8\frac{\log R_0}{\Lo } $, 
we have
\[
 x^{-2D} = ((1-\beta_1) \Lo  )^{2c_4 c_8 }
\ \text{and}\ 
 (1 + x^{ -(\theta-1)D } )^2 \le  \Big(1+ e^{-(\theta-1)c_4c_8 \log R_0 } \Big)^2.
\]
We apply Corollary \ref{cor-zero-sums} with $n(\frac1{R_1 \Lo};1)=1$:
\[
\sideset{}{'}\sum_{\frac{1}{R_1\Lo }\le|\rho  -1|\le 1}  \frac1{|\rho -1|^2}
\le
\Big(  \Big( \frac{1+\alpha}{\alpha}\Big)^2(R_1^2 + 2R_1\alpha\omega(\alpha)) -R_1^2 \Big)\Lo^2      ,
\]
which together with \eqref{k-bd}, gives
\begin{equation}\label{sum<1}
\sideset{}{'}\sum_{\frac{1}{R_1\Lo}\le |\rho  -1|\le 1} |k(\rho )|
\le
c_{11} \Lo^2 ((1-\beta_1) \Lo  )^{2c_4 c_8 },
 \end{equation}
with
\begin{equation}\label{c14}
c_{11}
=   \Big(1+ e^{-(\theta-1)c_4c_8 \log R_0 } \Big)^2  R_1\Big(  \Big( \frac{1+\alpha}{\alpha}\Big)^2(R_1+2\alpha\omega(\alpha))-R_1    \Big),
\end{equation}
where $\omega(\alpha)$, depending on $\mathscr{Q}_0$ and $n_0$, is defined in \eqref{def-omega}.
Note that $c_9, c_{10} $, and $c_{11}$ all depend on $\varepsilon_1,\sigma_1$, and $\eta$.
We conclude by using Proposition \ref{prop-explicit-ineq-zeros} and \eqref{sum<1}. 
\end{proof}
For the rest of the article, $n_0\le n_L \le \mathscr{Q}_0\Lo$ and $\Lo\ge \Lo_0$, with $\Lo_0$ and $\mathscr{Q}_0$ as defined in \eqref{def-L} and \eqref{def_Q}, respectively. 
For each $(n_0,d_0)$ listed in Table \ref{table}, we choose 
$\varepsilon_1>0 ,  \sigma_1\ge 1$, and $\eta\in (0,1]$ which give $c_1^{\prime}(\varepsilon_1,\sigma_1,\eta)$ and $ c_2^{\prime}(\varepsilon_1,\sigma_1,\eta)$ as defined in \eqref{def-C7-C8}. 
We then obtain $c_8$ as defined in \eqref{c12}, and define 
\begin{equation}\label{def-c4}
c_4 = \frac1{2c_8} + 0.001 .
\end{equation}
To prove Theorem \ref{mainthm}, we shall split our consideration depending on the distance of $\beta_1$ to 1-line.  
We shall introduce further parameters:
\[
\varepsilon_2>0, \ 
\sigma_2\ge1, \ 
\kappa \ge1, \ 
0< \lambda \le 1, \ 
0< \mu \le 1, \ \text{and}\ 
1< \nu \le 2.
\]
For the rest of the article, we denote 
\begin{equation}\label{def-C1C2}
C_1 = c_1^{\prime}(\varepsilon_2,\sigma_2,0.5)
\ \text{and}\ 
C_2 = c_2^{\prime}(\varepsilon_2,\sigma_2,0.5),
\end{equation}
where $c_1'$ and $c_2'$ are defined in \eqref{def-C7-C8}.
We assume 
\[
\frac{  (\kappa C_1  )^2}{ \Lo}
<
\frac{\mu}{c_4\Lo^{\nu-1} } 
<
\frac{\lambda}{c_4 } 
<
\frac{1}{R_0} .
\]
We explore the cases:
\begin{itemize}
\item ``medium'' when $\frac{\lambda}{c_4} \le (1-\beta_1)\Lo <  \frac{1}{R_0 } $,
\item ``small'' when $\frac{\mu}{ c_4 \Lo^{\nu-1}} \le (1-\beta_1 ) \Lo  \le \frac{\lambda}{c_4}$, 
\item ``very small'' when $  \frac{  (\kappa C_1  )^2}{ \Lo}  \le (1-\beta_1 ) \Lo  \le \frac{\mu}{c_4 \Lo^{\nu-1} }$, and 
\item ``extremely small'' when $(1-\beta_1)\Lo < \frac{  (\kappa C_1  )^2}{ \Lo} $.
\end{itemize}
For the ``extremely small'' case, we will use a different weight $k$ and thus leave this for the end. 
\\
In each of the first three cases,  we use the weight $k_{\theta}$ as defined in \eqref{def-k}, and  
obtain $B = 2\theta c_4$ as given by \eqref{def-B}.
The choice of parameters will ensure $B$ to be as small as possible while keeping the expression
in the right of \eqref{L8.3} positive. 
We note that if 
$(1-\beta_1)\Lo \ge \frac{  (\kappa C_1  )^2}{ \Lo} $, then 
the condition \eqref{cond-eta}, which is needed to apply Proposition \ref{prop-exceptional}, is satisfied when
\begin{equation}\label{cond-eta-2}
\frac{  c_2^{\prime}(\varepsilon_1,\sigma_1,\eta) }{\Lo_0} 
\Big( \max\Big\{  \log \Big( \frac{ c_1^{\prime}(\varepsilon_1,\sigma_1,\eta)}{ (\kappa C_1  )^2 } \Big),0     \Big\}  + \log  \Lo_0 \Big) \le \eta.
\end{equation}

\subsubsection{``Medium" case:}

Assume $\frac{\lambda}{c_4} \le (1-\beta_1)\Lo <  \frac{1}{R_0 } $. Lemma \ref{phi-lemma}(i) 
yields
\begin{equation}\label{phi-v>1'}
\phi_\theta( (1-\beta_1) c_4 \Lo) c_4^2 - c_{11} ((1-\beta_1) \Lo  )^{2c_4 c_8 }
\ge \phi_{\theta}(\lambda) c_4^2 - c_{11} \big((1-\beta_1) \Lo  \big)^{2c_4 c_8 }.
\end{equation}
Hence, Proposition \ref{prop-exceptional} gives 
\begin{equation}\label{solve-medium}
 \Lo^{-2}\frac{|G|}{|C|}\mathcal{S}_C
\ge  
 \phi_{\theta}(\lambda) c_4^2 
- c_{11} R_0^{-2c_4 c_8 }
 - \mathcal{E}_1(c_4),
\end{equation}
where $c_{11}$ is defined in \eqref{c14}, and $\mathcal{E}_1(t)$ is defined by
\begin{equation}\label{vareps1}
 \mathcal{E}_1(t) =  \frac{c_6}{ \Lo_0}+ \cQ_0 \alpha_3 (\theta-1) \Lo_0 t e^{-t \Lo_0 }
+ \frac{1}{(\Lo_0- \frac{1}{2} )^2} e^{ - t(2 \Lo_0 - 1)}
+ \frac{ c_5}{\Lo_0} e^{-2 t \Lo_0 }.
\end{equation}

\subsubsection{``Small" case:}

Assume $\frac{\mu}{c_4 \Lo^{\nu-1} } \le (1-\beta_1 ) \Lo  \le \frac{\lambda}{c_4}$.
As $(1-\beta_1 ) \Lo  \le \frac{\lambda}{c_4}$ and $2 c_4c_8 >1$, Lemma \ref{phi-lemma}(ii) yields
\[
\phi_\theta( (1-\beta_1) c_4 \Lo) c_4^2 - c_{11} ((1-\beta_1) \Lo  )^{2c_4 c_8 }
\ge 2(\theta-1)^2 e^{-2\lambda}  c_4^3(1-\beta_1 ) \Lo
 - c_{11} \Big(\frac{\lambda}{c_4}\Big)^{2 c_4c_8-1} ((1-\beta_1) \Lo  ).
\]
Hence, by Proposition \ref{prop-exceptional}, we have
\begin{equation}\label{solve-small}
((1-\beta_1 ) \Lo )^{-1} \Lo^{-2}\frac{|G|}{|C|}\mathcal{S}_C
\ge  
2(\theta-1)^2 e^{-2\lambda}   c_4^3
-  c_{11} \Big(\frac{\lambda}{c_4}\Big)^{2 c_4c_8-1} 
- c_6\frac{ c_4 \Lo_0^{\nu-2} }{\mu}
- \mathcal{E}_2(c_4) ,
\end{equation}
with
\begin{equation}\label{def-epsilon2}
\mathcal{E}_2(t)=   \Big(   
\cQ_0 \alpha_3 (\theta-1) \Lo_0^2 c_4 e^{- t \Lo_0}
+ \frac{1}{ \Lo_0 (1 - \frac{1}{2\Lo_0} )^2} e^{ - t (2 \Lo_0 - 1)}
+ c_5 e^{-2 t \Lo_0 } \Big) \frac{t\Lo_0^{\nu-2} }{\mu} .
\end{equation}

\subsubsection{``Very small" case:}

Assume $  \frac{ (\kappa C_1 )^2}{ \Lo}  \le (1-\beta_1 ) \Lo  \le \frac{\mu}{c_4\Lo^{\nu-1} }$. Then 
Lemma \ref{phi-lemma}(ii) yields 
\begin{multline*}
\phi_\theta( (1-\beta_1) c_4 \Lo) c_4^2 - c_{11} ((1-\beta_1) \Lo  )^{2c_4 c_8 }
\ge
 2(\theta-1)^2 e^{-\frac{2\mu}{\Lo_0^{\nu-1}}}   c_4^3 (1-\beta_1) \Lo
\\ - c_{11} \Big(\frac{\mu}{c_4  \Lo_0^{ \nu-1} } \Big)^{2c_4 c_8-1 } (1-\beta_1) \Lo,
\end{multline*}
which, combined with Proposition \ref{prop-exceptional}, yields 
\begin{equation}\label{solve-verysmall}
((1 -\beta_1) \Lo)^{-1}\Lo^{-2}\frac{|G|}{|C|}\mathcal{S}_C 
\ge  2(\theta-1)^2 e^{-\frac{2 \mu}{\Lo_0^{\nu-1}}}  c_4^3 -c_{11} \Big(\frac{\mu}{c_4 \Lo_0^{\nu-1} }\Big)^{2 c_4c_8-1}  
-   \frac{ c_6 }{ (\kappa C_1 )^2}
 -\mathcal{E}_3(c_4),
\end{equation}
with
\begin{equation}\label{def-epsilon3}
\mathcal{E}_3(t)=
\Big(  
\cQ_0 \alpha_3 (\theta-1) \Lo_0^2 t e^{-t \Lo_0 }
+ \frac{1}{\Lo_0 (1 - \frac{1}{2\Lo_0} )^2} e^{ - t (2 \Lo_0 - 1)}
+ c_5 e^{-2 t \Lo_0} \Big)  \frac{ 1}{ (\kappa C_1 )^2}.
\end{equation}

\subsubsection{``Extremely small" case:} 

We assume $1- \beta_1 < (\kappa C_1 )^2 \Lo^{-2}$.
Here we use the weight as introduced by \cite{LMO}, and used by \cite{AK}: 
\[
k(s)=x^{s^2+s},
\]
with $x=d_L^{c_{12}}\ge 10^{10}$. 
Thus, here $B$ is given by
\[
B = 5 c_{12}.\footnote{We observe that the power $5$ seems to be the smallest real number in order to control the sum over the higher power norm primes. Namely in \cite[Lemma 3.4]{AK2} we can see that the bound for the sum over the primes satisfying $N\mathfrak{p} > x^a$ is given by the integral $\int_{(a-1)(\log x)}^{\infty} \big(\frac{(t+\log x)t}{2\log x}-1\big) \frac{\exp\big(t-\frac{t^2}{4\log x}\big)}{\sqrt{4\pi \log x}} dt$. So for the very least, one needs a non-positive power in the exponent term, which means $1-\frac{(a-1)}{4}\le0$, giving $a\ge5$. }
\]
First, we have the following explicit inequality from combining 
\cite[(3.2), Proposition 4.7, line 9 of p. 1453, and line -1 of p. 1452]{AK}:
\begin{equation}\label{L8.5} 
 \begin{split}
\frac{|G|}{|C|}\sum_{  \mathfrak{p}\in \mathcal{P}(C),  N\mathfrak{p}\le x^5} (\log N\mathfrak{p}) \hat{k} (N\mathfrak{p})
\ge &  k(1)-k(\beta_1)  - \sum_{|\gamma|\le 1}|k(\rho )|
\\
& -19.17 x \Lo -1.8292\Lo 
 - 5.4568 x(\log x)^{\frac{1}{2}} n_L  \Lo,
  \end{split}
\end{equation}
where, as later, the sum on the right is over non-trivial zeros $\rho =\beta+i\gamma \neq \beta_1$ of $\zeta_L(s)$.\footnote{The values appearing in \eqref{L8.5} are respectively called $c_{20}, c_{15}', \alpha_4$ in \cite{AK}.}\\
We now emphasize how to improve the estimate for $  k(1)-k(\beta_1) $ as well as the contribution of the low-lying zeros of $\zeta_L(s)$. 
As $\beta_1$ is the closest to $1$, we use the mean value theorem to deduce
\[
k(1)-k(\beta_1) 
 \ge (1-\beta_1) (2\beta_1+1)  (\log x)x^{ \beta_1^2+\beta_1 }
 = 
c_{12}  (1-\beta_1)  \Lo  (2\beta_1+1) e^{2c_{12}\Lo}  e^{c_{12} (\beta_1^2+\beta_1-2)\Lo} .
 \]
Since 
\[
2\beta_1+1
\ge 
3 - \frac{2(\kappa C_1 )^2}{ \Lo_0^2} 
\ \text{and}\ 
 \beta_1^2+\beta_1 - 2
= (\beta_1+2)(\beta_1-1) 
\ge - \frac{ 3(\kappa C_1 )^2 }{\Lo^2} ,
\]
we have
\begin{equation}\label{diff1beta}
k(1)-k(\beta_1) 
 \ge 
\phi_0(c_{12} )  \Big( \Lo (1-\beta_1)  e^{2c_{12}\Lo} \Big) c_{12}, 
\end{equation}
where
\begin{equation}\label{def-phi_0}
\phi_0(t) =  e^{ - \frac{ 3 (\kappa C_1 )^2 }{\Lo_0} t} \Big( 3 -  \frac{2(\kappa C_1 )^2}{ \Lo_0^2} \Big)    .
\end{equation}
We split the sum over the zeros $\rho =\beta+i\gamma\neq \beta_1$, with $|\gamma |\le 1$, into $ \Sigma =  \Sigma_1+\Sigma_2$,
where
\[
\sideset{}{_1}\sum = 
\sum_{ \beta < 1/2}
+\frac12 \sum_{ \beta =  1/2} \ \text{and}\ 
\sideset{}{_2}\sum = 
\frac12 \sum_{ \beta =  1/2 } 
+\sum_{ \beta > 1/2  }.
\]
We use the bound
$
|k(\rho )|  \le x^{\beta^2 +\beta} \le x^{3/4}$
in the first part, together with the bound  $ N_L(1) \le c_{13}\Lo$, as given in \eqref{zero-density}, where 
\begin{equation}\label{c21}
c_{13} = \frac{1}{\pi} + 0.296 +  \mathscr{Q}_0 \Big(3.971 -\frac{\log(2\pi e)}{\pi} \Big)  +\frac{3.969 }{\Lo_0}.
\end{equation}
Thus, $\Sigma_1  
\le  \frac{N_L(1)}2 x^{\frac{3}{4}}
\le \frac{c_{13}}2 \Lo x^{\frac{3}{4}} $. Now, we assume $ c_{12}> \frac45 c_3$ and deduce
\begin{equation}\label{bnd-smallzeros1}
\sideset{}{_1}\sum
\le \frac{c_{13}}2  e^{(c_3-\frac{5}{4}c_{12})\Lo} 
\Big( (1-\beta_1) \Lo e^{2c_{12}\Lo}\Big)
\le \frac{c_{13}}2  e^{-(\frac{5}{4}c_{12}-c_3)\Lo_0} 
\Big( (1-\beta_1) \Lo e^{2c_{12}\Lo}\Big)
\end{equation}
since 
$\Lo x^{\frac{3}{4}} = \big(x^2 \Lo (1-\beta_1)\big) \frac{x^{-5/4}}{(1-\beta_1)}$,
where
$x=e^{c_{12}\Lo}$, and 
$1-\beta_1\ge d_L^{-c_3} = e^{-c_3\Lo}$ according to Theorem \ref{upperbd}. 
\\
For $\Sigma_2$, we apply $\beta^2 \le 2\beta=2-2(1-\beta)$ so that
\begin{equation}\label{S21}
|k(\rho )|   
 \le \frac{ x^{-2(1-\beta)} }{ (1-\beta_1)\Lo }\Big( (1-\beta_1) \Lo x^2\Big)
= \frac{ e^{-2c_{12}(1-\beta)\Lo} }{ (1-\beta_1)\Lo } \Big( (1-\beta_1) \Lo e^{2c_{12}\Lo}\Big).
\end{equation}
Together with Deuring-Heilbronn phenomenon, as described in Theorem \ref{thm-DH-all}:
\[(1-\beta)\Lo \ge C_2 \log   C_1  - C_2 \log\big( (1-\beta_1) \Lo \big) ,\]
we obtain
\begin{equation}\label{S22}
e^{-2c_{12}(1-\beta)\Lo }
 \le 
C_1^{-2 c_{12}C_2    }
( (1-\beta_1) \Lo )^{ 2 c_{12}C_2   }.
\end{equation}
Combining \eqref{S21} and \eqref{S22} with $(1- \beta_1)\Lo < (\kappa C_1 )^2 \Lo^{-1}$, we have
\begin{equation}\label{S23}
|k(\rho )|   
 \le \frac{  C_1^{2 c_{12}C_2  -2} 
\kappa^{ 4 c_{12}C_2   -2} }{ \Lo^{ 2 c_{12}C_2   -1}  }\Big( (1-\beta_1) \Lo e^{2c_{12}\Lo}\Big) 
\end{equation}
provided that $c_{12}> \frac{1}{2C_2}$.
Thus, we derive
\begin{equation}\label{bnd-smallzeros2}
\sideset{}{_2}\sum
\le
  \frac{c_{13}}2\kappa^2
\Big( \frac{  \kappa^2 C_1 }{ \Lo_0}\Big)^{2c_{12} C_2  -2} 
\Big( (1-\beta_1) \Lo e^{2c_{12}\Lo}\Big) ,
\end{equation}
as long as $c_{12} > \frac{1}{C_2 }$.
Combining \eqref{L8.5}, \eqref{diff1beta}, \eqref{bnd-smallzeros1}, and \eqref{bnd-smallzeros2} finally gives that for
\[ c_{12}>\max\Big( \frac1{  C_2  },  c_3  \Big),
\] 
\begin{multline}\label{solve-extremelysmall}
 \Big( (1-\beta_1) \Lo e^{2c_{12}\Lo}\Big) ^{-1}
 \frac{|G|}{|C|}\sum_{ \mathfrak{p}\in \mathcal{P}(C),  N\mathfrak{p}\le d_L^{5c_{12}}} (\log N\mathfrak{p}) \hat{k} (N\mathfrak{p})
\\
\ge
\phi_0(c_{12} )  c_{12} 
 - \frac{c_{13} \kappa^2  }2 \Big( \frac{  \kappa^2 C_1 }{ \Lo_0} \Big)^{2c_{12} C_2  -2} 
- \mathcal{E}_4(c_{12}),
\end{multline}
where
\begin{equation}\label{def-epsilon4}
\mathcal{E}_4(t) = 
\frac{c_{13}}2  e^{-(\frac{5}{4} t -c_3)\Lo_0} 
+  
\Big( 19.17 
 + 5.4568 \cQ_0 \Lo_0^{\frac{3}{2}}   t ^{1/2} 
 \Big) e^{-( t -c_3)\Lo_0} 
+ 1.8292  e^{-(2 t -c_3)\Lo_0}.
\end{equation}

\subsection{Numerical results}

We choose $\varepsilon_1, \sigma_1$, and $\eta$ to make $c_4$ as small as possible while  \eqref{solve-medium}, \eqref{solve-small}, \eqref{solve-verysmall}, and \eqref{solve-extremelysmall} are satisfied.
\\
Note that the choice for $\nu$ and $\eta$ are balanced in the inequalities \eqref{solve-small} and \eqref{solve-verysmall} as $\frac{ c_4 \Lo_0^{\nu-2} }{\mu}$ and $\frac{\mu}{c_4 \Lo_0^{\nu-1}}$ both have to be small enough.
Similarly, the choice of $\kappa$ is balanced by the inequalities \eqref{solve-verysmall} and \eqref{solve-extremelysmall} as both $\frac{ c_6 }{ (\kappa C_1 )^2}$ and $\frac{  \kappa^2 C_1 }{ \Lo_0}$ need to be small. 
The choice for $\varepsilon_2$ and $\sigma_2$ is to make $\frac{ c_6 }{ (\kappa C_1 )^2}$ as small as possible in \eqref{solve-verysmall}.
That choice was satisfactory to keep \eqref{solve-extremelysmall} valid. 
Finally, we choose $\theta$ in each case to make $B$ as small as possible. 
We recall that \[R_0=20\ \text{ and }\ R_1=1.24 .\]
We detail the case when $n_0=9$.
From Table \ref{Table-Fiori}, for $\Lo \le \log( 2.29 \cdot10^7 )$, we have 
\[B\le 1.7712.\]
In the case where $\Lo> \log( 2.29 \cdot10^7)$, we first investigate the case where there are no exceptional zero. Choosing $\theta=33.27$ and $\alpha=2.56$ and solving in $c_4$ so that \eqref{solving_c4} is satisfied then lead to \[B=42.3849.\]
In the case the exceptional zero $\beta_1$ exists, we set
\[\varepsilon_1=5.57, \ 
\sigma_1=4.45, \ 
\eta=0.025\]
so that
\[
c_1^{\prime}(\varepsilon_1,\sigma_1,\eta) = 0.002509182, 
\ \text{and}\ 
c_2^{\prime}(\varepsilon_1,\sigma_1,\eta) = 0.04890427.
\]
This gives
\[
c_8 = 0.003324331, 
\ \text{and}\ 
c_4 =  150.4072.
\]
We also take 
\[
\varepsilon_2 =5.97, \ 
\sigma_2 = 4.5, \ 
\kappa =23, \ 
\lambda = 0.2, \ 
\mu = 0.1 ,
\ \text{and}\  
\nu = 1.15,
\]
i.e., we work over the ranges:
\[
\frac{  0.003330581}{ \Lo}
<
\frac{0.0006648618}{\Lo^{0.15} } 
<
0.001329724
<
0.05 .
\]
We find in the ``extremely small'' case that
\[
c_3 = 9.85\ldots, \ c_{12} = 34.97\ldots , \text{and}\ B = 5 c_{12} = 174.8780. 
\]
Finally, we choose $\alpha$ and $\theta$ in each other case and obtain
\begin{center}
\begin{tabular}{| l | c | c | c | }
\hline 
Case      & Medium       & Small           &  Very small  \\ \hline 
$\theta$ & $1.02$         & $1.02$         & $1.029$           \\ \hline 
$\alpha$ & $5.85$        & $0.17$         & $0.67$           \\ \hline 
$B$       & $306.8307$  & $306.8307$ & $309.5380$   \\ \hline 
\end{tabular}
\end{center}
For all the remaining degrees $n_L\neq 9$, we then do the calculations using the same parameter values  except for $\varepsilon_2$ which we choose optimally for each degree.

\begin{center}
 \begin{table}[h]
   \caption{Table of admissible $(n_0,d_0,B)$}\label{table}
 \begin{tabular}{|c|c|c|c|c|c|}
\hline
 $n_0$ & 2 &  3 &  4 &  5 &  6 \\
 \hline
 $d_0$ &  $10^{10}$ & $10^{10}$ & $10^{8}$ & $10^{8}$ & $10^{8}$  \\
 \hline 
 $B$ & 223.2 & 231.7 & 249.1 & 259.8 & 271.1 \\
\hline 
\hline
 $n_0$ & 7 &  8 & 9 &  10 &  11  \\
 \hline
 $d_0$ & $10^{8}$ & $10^{7}$ &  $2.29\cdot 10^{7}$ & $1.56\cdot 10^{8}$ & $3.91\cdot 10^{9}$   \\  
\hline
 $B$ & 280.5  & 303.3 & 309.6 & 309.4 & 303.0  \\
 \hline
\hline
 $n_0$ &  12 &  13 &  14 &  15 & 16  \\
 \hline
 $d_0$ & $2.74\cdot 10^{10}$ & $7.56\cdot 10^{11}$ & $5.43\cdot 10^{12}$ & $1.61\cdot 10^{14}$ & $1.17\cdot 10^{15}$ \\  
 \hline
  $B$ & 303.2 & 298.4 & 298.8 & 295.1 & 295.6  \\ 
 \hline   
\hline
 $n_0$ &  17 &  18 &  19 &  20 &   $  21+$  \\
 \hline
 $d_0$ &  $3.70\cdot 10^{16}$ & $2.73\cdot 10^{17}$ & $9.03\cdot 10^{18}$ & $6.74\cdot 10^{19}$ & $10^{n_L} $  \\  
 \hline
  $B$ & 292.5 & 293.0 & 290.4 & 291.0 &   $ 290.2$ \\ 
 \hline   
 \end{tabular}
 \end{table}
\end{center}
We find all admissible values for $B$ are no larger than the above $309.5380$ (see Table \ref{table} for the values of $B$ for each case).
This proves that for any Galois extension $L/K$ of number fields with Galois group $G$, $C$ a conjugacy class in $G$, $n_L=n_0$ and $d_L\ge d_0$, there exists an unramified prime  $\mathfrak{p}$ of $K$, of degree one,  such that $\sigma_{\mathfrak{p}}=C$ and $N \mathfrak{p} \le d_{L}^{B}$ with 
admissible $(n_0,d_0,B)$ recorded in Table \ref{table}.
Together with Fiori's numerical verifications recorded in Table \ref{Table-Fiori}, this concludes the proof of Theorem \ref{mainthm}.

\subsection{A summary of key ideas}\label{keys}

We list here what we have done differently from \cite{AK}.\\

\noindent (i) For $d_L$ relatively small, the least prime is bounded numerically (see Appendix).

\noindent (ii) If the exceptional zeros exists, we have an enlarged region that contains no non-exceptional (Proposition \ref{lemma-zfr-enlarged}). (Cf. \cite[Eq. (6.1)]{AK}.)

\noindent (iii) We obtain a better version of the Deuring-Heilbronn phenomenon (cf. \cite[Theorem 7.3]{AK}) which is due to
\begin{itemize}[leftmargin=15pt,labelindent=0pt,labelwidth=12pt,itemindent=8pt,topsep=0pt]
 \item the refined Tur\'an's power sum method established in \cite{KNW} (cf. \cite[Theorem 4.2]{LMO}), and 
 \item better estimates for $\frac{\Gamma'}{\Gamma}$, i.e., Lemma \ref{bnd-gammas} and \eqref{bnd-gammas-t<1}, (cf. \cite[Lemmata 5.3 and 5.4]{AK}).
\end{itemize}
In addition, we do not follow \cite[Theorem 7.3]{AK}) to split our consideration into imaginary and non-imaginary cases but only consider the non-trivial zeros of $\zeta_L(s)$ (as the Deuring-Heilbronn phenomenon is ``trivial'' for the trivial zeros of $\zeta_L(s)$).

\noindent (iv) Regarding the weight $k$:
\begin{itemize}[leftmargin=15pt,labelindent=0pt,labelwidth=12pt,itemindent=8pt,topsep=0pt]
 \item we change the weight $k$ as in \eqref{def-k} for the  ``non-extremely-small'' cases.
  \item we use better estimates for $k(1)-k(\beta_1)$, namely, \eqref{bnd-ksigma<1} (together with Lemma \ref{phi-lemma}) and \eqref{diff1beta}.
\end{itemize}

\noindent (iv) Regarding the position of the exceptional zero $\beta_1$ of the Dedekind zeta function:
\begin{itemize}[leftmargin=15pt,labelindent=0pt,labelwidth=12pt,itemindent=8pt,topsep=0pt]
\item we consider a more refined splitting for the position of $\beta_1$ (Section \ref{small-exp-zero}; cf. \cite[Sec. 8]{AK}); our ``small'' case (when $(1-\beta_1)\Lo$ is smaller than constant size) is giving the worst $B$. Splitting this further with a ``very small'' case allowed further improvement. 
\end{itemize}

\noindent (v) Regarding the zeros of the Dedekind zeta function:
\begin{itemize}[leftmargin=15pt,labelindent=0pt,labelwidth=12pt,itemindent=8pt,topsep=0pt]
\item we use the improved zero-density estimates \eqref{zero-density}, established in \cite{HSW}, and \eqref{counting-zero-bd-fixed} (cf. \cite[Propositions 5.5 and 5.6]{AK}).
\item we obtain a better estimate for zeros $\rho$ with $|\Im (\rho)|\le 1$ (see Propositions \ref{prop-explicit-ineq-zeros} and \ref{prop-exceptional}; cf. \cite[Lemma 8.2]{AK}) by
     \begin{itemize}
        \item noticing that there are no non-exceptional zeros close to 1 (see \eqref{sum-non-beta1}), and
        \item balancing the use of zero-free region and zero repulsion in the comparison of (a) and (b) done in the proof of  Proposition \ref{prop-exceptional} (cf. \cite[(i) and (ii) of the proof of Lemma 8.2]{AK}).
     \end{itemize}
     \item a consequence of not over-counting  low-lying zeros is to improve the value of $B$ in the ``medium'' case.
\item we realize that for the ``extremely small'' case the contribution of the zeros $\rho$, with $\Re(\rho)\le \frac{1}{2}$, to $\sum_{\rho}|k(\rho)|$ is small (see \eqref{bnd-smallzeros1}; cf. \cite[Lemma 8.5]{AK}). By this observation, we essentially half the size of the sum. 
\item we use the stronger repulsion of zeros $\rho$, with $|\Im (\rho)|\le 1$, from Theorem \ref{thm-DH-all} instead of the general one as used in \cite[Theorem 7.3(2)]{AK}.
\end{itemize}

Lastly, as we used a different labeling for the constants from \cite{AK}, we provide here the following table for comparison:
 \begin{center}
\begin{tabular}{|c|c|c|c|c|c|c|c|c|c|c|c|c|}
\hline
 Ahn-Kwon's notation & $c_7$ &  $c_8$ &  $c_{10}$ &  $c_{11}$ &  $c_{12}$ &  $c_{13}$ &  $c_{14}$ &  $c_{15}$ &  $c_{16}$ &  $c_{21}$ &  $c_{23}$ &  $\check{c}$\\
 \hline
 our new notation &  $c_1$ & $c_2$ & $c_3$ & $c_9$ & $c_8=c_{10}$ & $c_6$ & $c_{11}$ & $c_5$ & $c_4$ & $c_{13}$ & $c_{12}$  & $8+\varepsilon$ \\  
 \hline 
\end{tabular} 
\end{center}
Note that in \cite{AK}, there is no constant corresponding to our $c_7$ as the non-exceptional case is not treated separately there.

\section{Lower bound for the Chebotarev density theorem}\label{lower-bound}

\begin{proof}[Proof of Theorem \ref{lwr-bnd}]
Throughout this section, we shall adapt the notation used in \cite{AK2}. We let $c_6=11.7$ and $1-\beta_1 \ge d_L^{-c_6}$. 
\footnote{$c_6$ in this section is our $c_3$ from Theorem \ref{upperbd}.} We shall  assume $\log x\ge d_L^{c_6}$. Let $a\in(1,2]$. We consider the kernel $k_a(s)$ defined by
$$
k_a(s)=\frac{(x^s-1)(a^s-1)}{s^2 \log a}.
$$
Recall that the inverse Mellin
transform  of $k_a(s)$ is
$$
 \widehat{k_a}(u) =\frac{1}{2\pi i}\int_{2-i\infty}^{2+i\infty}k_a(s) u^{-s}ds
 =\left.
  \begin{cases}
   \frac{\log u}{\log a} & \text{if } 1 < u < a , \\
   1 & \text{if } a \le u \le x , \\
   \frac{1}{\log a}\log (\frac{ax}{u}) & \text{if } x < u \le ax , \\
   0 & \text{if }   u > ax.\\
  \end{cases}
  \right.
$$
By \cite[Lemma 2.3]{AK2}, we know that
\begin{equation} \label{DC-1}
\mathcal{D}_C=\sum_{\mathfrak{p}\in \mathcal{P}(C)} (\log N\mathfrak{p}) \widehat{k_a} (N\mathfrak{p}) \le \tilde{\pi}_C(ax) \log x.
\end{equation}
Let $x_0 = 3^{c_6}$. Then for $x\ge \exp(x_0)$, one has
\begin{equation} \label{DC-2}
\frac{|G|}{|C|}\mathcal{D}_C\ge c_{43}(a)x,
\end{equation}
where
\[
c_{43}(a)= 0.49 \frac{a-1}{\log a} -c_{41} e^{- c_{39} \sqrt{\frac{x_0}{2}}}
- \Big(c_{35}x_0 \log x_0 +c_{40}  x_0\Big)  e^{- \frac{x_0}{2}}
\]
and $c_{35}$, $c_{39}$, $c_{40}$, and $c_{41}$ are constants, depending on $x_0$ and $a$, which are defined in \cite[pp. 304--306]{AK2}.
By \eqref{DC-1} and  \eqref{DC-2}, we have
 $$
\frac{|G|}{|C|} \tilde{\pi}_C(ax) \log x \ge c_{43}(a)x.
$$
Hence, replacing $x$  by $x/a$, together with a simplification, we deduce
$$
\tilde{\pi}_C(x) \ge \frac{c_{43}(a)}{a}  \frac{|C|}{|G|}\frac{x}{\log x -\log a} \ge  \frac{c_{43}(a)}{a}  \frac{|C|}{|G|}\frac{x}{\log x }.
$$
Finally, we conclude the proof by choosing $a$ as close to $1$ as possible.
Namely, for $a=1.0001$, we find $ m=\frac{c_{43}(a)}{a} =0.489975\ldots$.
\end{proof}

\begin{remark}
Note that, for $a\approx 1$,
$ c_{25}=2(50.313...)\frac{a+1}{\log a}$, 
$c_{35}\approx \frac{2}{c_6 \log 3}c_8$, 
$c_{39}\approx  \frac{1}{\sqrt{29.57}}$,  
$c_{40}\approx \sqrt{a} \approx 1$, and
$c_{41}\approx \frac{5.7868c_{39}}{c_6}\frac{a+1}{\log a}$.
Note that, as $\frac{a-1}{\log a}\approx 1$ when $a\approx 1$, then $ m=\frac{c_{43}(a)}{a} \approx 0.49$. 
This value $0.49$ comes from the choice of the kernel $k_a(s)$, so this may be improved by using a different kernel.
\\
Note that for $a=2$ we find $m=\frac{c_{43}(a)}{a}=0.353460\ldots$.
\end{remark}

\section*{Acknowledgments}
The authors are grateful to the referees for their careful reading of this article and for their helpful comments.

\appendix

\section{\large Numerical Verification of the Least Prime in the Chebotarev Density Theorem - by Andrew Fiori\footnote{The majority of computations for this work were provided on systems supported by Compute Canada. Additional hardware at the University of Lethbridge was purchased through an NSERC RTI grant and supported by funding from the University of Lethbridge.} }
 \label{Appendix}

\subsection{Introduction}

The purpose of this appendix is to document partial numerical verification of different bounds for the least prime in the Chebotarev density theorem.
There are several known asymptotic bounds for the least prime \cite{BS,LMO,Z17,AK,KNW}, the shape of these bounds depends on whether one assumes the GRH. 
For each of these bound shapes we document the worst case behaviour for fields up to some bound on the discriminant and the Galois type of the field.
Here Galois type refers to the Galois closure of the field.

The numerical verification documented here involved two essential steps:
\begin{enumerate}
 \item Obtaining proven complete tabulations of number fields up to some discriminant bound. We describe our methods in Section \ref{sec:tab}.
 \item For each field, and each automorphism of that field, finding the least prime in the Chebotarev density theorem. We describe our methods in Section \ref{sec:ver}.
\end{enumerate}
A summary of our results by degree is in Section \ref{sec:data}. More complete tables of results are available from \cite{NumericalChebotarevWebsite}.
Each step of this process made extensive use of the PARI-GP library \cite{PARI2}

\subsection{Tabulation of Number Fields}\label{sec:tab}

Many researchers have made contributions towards the development of tabulations of number fields with small discriminant. In particular,
we refer the reader to the following (incomplete) list of references:
\cite{JonesRoberts,KlunersMalle,simon1998,HunterQuinticFields,PohstMinimal,MartinetMinimal,BelabasCubic,QuarticKummer,
TableOfQuintic,PrimitiveSextic,SexticOverQuad,SexticOverCube,DiazDeg7,DiazDeg8,TROctic,SelmaneDeg8,ImprimDeg10,SelmaneDeg10A,JonesRamSearch,VoightTRFields}.
Moreover, several efforts have been made to construct reference databases particularly by the PARI group, Kluners and Malle, and Jones and Roberts.

The above work and databases form the basis for our own tabulations which in some cases extend these.
The tabulations we have made are available in \cite{FieldTabulationWebsite}. Note that extending these tabulations is an ongoing project.

\subsubsection{Methods for Tabulations of Number Fields}

For degree two fields tabulations are essentially trivial, and for degree three fields the work of Belabas \cite{BelabasCubic} provides an efficient method to obtain tabulations.
For higher degree fields there are several strategies which may be applicable depending on the type of the field.
\begin{itemize}
 \item For large degrees the Minkowski bounds provide strong lower bounds for the minimal root discriminant of a field, see for example \cite{DiazMinkowskiDiscriminantBounds}
       We used this as the basis for completion results for degrees 9 and higher. Note that for a field of degree $n_L$ which is $21$ or larger we have $d_L > 10^{n_L}$.
       
 \item If the type of the field being considered is primitive then a search based on the geometry of numbers using ideas from \cite{HunterQuinticFields, PohstMinimal} provides a method
       to obtain a complete tabulation up to a chosen bound on the discriminant.     
       We used this method to extend the tabulations for relevant field types and in particular to improve upon existing tabulations in degrees $4,5,6$, and $8$.
       
 \item If the type of the field being tabulated admits an automorphism one may use Kummer theory to build all possible relative extensions based on complete tabulations
       of the relevant lower degree field. PARI-GP includes complete implementations of the aspects of class field theory and Kummer theory necessary to implement these methods.  
       We used this method to extend the tabulations for relevant field types in particular to improve the completion in degree $4,6,8$.
       
 \item If the type of field being tabulated admits a subfield, over which the relative Galois group (of the relative Galois closure) is solvable, one can use
          Kummer theory in a more elaborate way building off the ideas used in \cite{QuarticKummer}. Such strategies also work if the Galois group over $\mathbb{Q}$ is solvable.        
          PARI-GP includes complete implementations of the aspects of class field theory and Kummer theory necessary to implement these methods.        
          We used this method to extend the tabulations for relevant field types in particular to improve the completion in degree $6,9$.
          
 \item If the field being tabulated admits a subfield then the geometry of numbers methods of \cite{MartinetMinimal} may be used to build all relative extensions.
          This method may be used even when Kummer theory can be used. Kummer theory will typically be simpler.        
          We have not made use these methods in our search as none of the extension degrees for which we can obtain results would benefit.
\end{itemize}

\subsection{Verification of Least Prime}\label{sec:ver}

We now document our method of verification.

Suppose $L/\mathbb{Q}$ is a fixed field with absolute discriminant $d_L$.
Let $G = {\rm Aut}_{\mathbb{Q}}(L)$ and $\sigma\in G$.

What we verify is that there exists a \textit{small} prime $p$ of $\mathbb{Q}$ which is unramified in $L$, for which there exists a prime $\mathcal{P}$ of $L$ over $p$ such that
\begin{enumerate}
 \item $\sigma(\mathcal{P}) = \mathcal{P}$.
 \item $\forall x\in\mathcal{O}_L, \sigma(x) \equiv x^p \pmod{\mathcal{P}}$,
\end{enumerate}
where small means relative to one of the types of bound under consideration relative to the discriminant $d_{L}$ of $L$.

We note that this formulation differs from how one typically states the conditions on the least prime in the Chebotarev density theorem.
We state the following claims without proof:

\begin{claim}
 If $K=L^{\sigma}$ is the fixed field of $\sigma$ then $\mathfrak{p} = \mathcal{P}\cap K$ satisfies the usual conditions of the least prime in the Chebotarev density theorem if and only if $\mathcal{P}$ satisfies these 
 slightly different conditions.
  Hence, we may find the smallest value of $N_{K/\mathbb{Q}}(\mathfrak{p}) = p$ by finding the smallest value of $p$.
\end{claim}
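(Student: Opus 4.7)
The plan is to establish the stated equivalence by matching the conditions $(1)$--$(2)$ on the prime $\mathcal{P}$ of $L$ with the defining properties of the Artin symbol at $\mathfrak{p}=\mathcal{P}\cap K$. The essential background fact is that $L/K$ is Galois with $\mathrm{Gal}(L/K)=\langle\sigma\rangle$ cyclic (hence abelian), so the conjugacy class of $\sigma$ is the singleton $\{\sigma\}$ and, at an unramified prime $\mathfrak{p}$, the Artin symbol $\sigma_{\mathfrak{p}}$ is a well-defined element of $\mathrm{Gal}(L/K)$ independent of the choice of $\mathcal{P}$ above $\mathfrak{p}$.

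For the implication $(1)$--$(2)\Rightarrow$ the usual Chebotarev conditions, I would first apply $(2)$ to any $x\in\mathcal{O}_K\subseteq\mathcal{O}_L$; since $\sigma$ acts trivially on $\mathcal{O}_K$ this gives $x\equiv x^p\pmod{\mathfrak{p}}$, forcing the residue field $\mathcal{O}_K/\mathfrak{p}$ to equal $\mathbb{F}_p$ and hence $N_{K/\Q}(\mathfrak{p})=p$. Because the algorithm explicitly restricts attention to primes $p$ unramified in $L$, the inertia subgroup $I(\mathcal{P}/\mathfrak{p})$ is trivial and the decomposition group $D(\mathcal{P}/\mathfrak{p})$ embeds into $\mathrm{Gal}(k(\mathcal{P})/\mathbb{F}_p)$. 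Condition $(1)$ places $\sigma$ inside $D(\mathcal{P}/\mathfrak{p})$, and $(2)$ identifies its image in $\mathrm{Gal}(k(\mathcal{P})/\mathbb{F}_p)$ with the arithmetic Frobenius $x\mapsto x^p$; hence $\sigma$ coincides with the Artin symbol $\sigma_{\mathcal{P}/\mathfrak{p}}$, and $\mathfrak{p}$ is an unramified degree-one prime of $K$ whose Frobenius conjugacy class is $\{\sigma\}$.

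For the converse, I would assume $\mathfrak{p}$ is an unramified degree-one prime of $K$ whose Artin symbol is the class of $\sigma$. Abelianness of $\mathrm{Gal}(L/K)$ ensures that every prime $\mathcal{P}$ of $L$ above $\mathfrak{p}$ has Frobenius equal to $\sigma$, and the defining property of Frobenius immediately yields $\sigma(\mathcal{P})=\mathcal{P}$ together with $\sigma(x)\equiv x^{N\mathfrak{p}}=x^{p}\pmod{\mathcal{P}}$ for all $x\in\mathcal{O}_L$, which is $(1)$--$(2)$. Combining the two directions, the rational prime $p=\mathfrak{p}\cap\Z$ satisfies $N_{K/\Q}(\mathfrak{p})=p$, so minimising $p$ over rational primes passing the computational test is the same as minimising $N_{K/\Q}(\mathfrak{p})$ over primes satisfying the classical Chebotarev conditions.

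The only subtlety worth stressing in the write-up is the explicit role of unramifiedness: without it an element of the inertia group would trivially satisfy $(2)$ on a residue field of order $p$ without being the Frobenius, so the algorithm's preliminary test that $p$ is unramified in $L$ is precisely what makes the passage from $(1)$--$(2)$ to $\sigma=\sigma_{\mathcal{P}/\mathfrak{p}}$ go through cleanly.
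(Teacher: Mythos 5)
The paper explicitly states this claim \emph{without proof} (the appendix reads ``We state the following claims without proof''), so there is no argument of the paper to compare yours against; your job here was to supply the missing justification, and you have done so correctly. Your argument is the natural one and is complete: from the standing hypothesis that $p$ is unramified in $L$ you get that both $\mathcal{P}/p$ and hence $\mathcal{P}/\mathfrak{p}$, $\mathfrak{p}/p$ are unramified; specialising condition (2) to $x\in\mathcal{O}_K$ (where $\sigma$ acts trivially) forces every residue in $\mathcal{O}_K/\mathfrak{p}$ to satisfy $x^p=x$, so $\mathcal{O}_K/\mathfrak{p}=\mathbb{F}_p$ and $N_{K/\mathbb{Q}}(\mathfrak{p})=p$; then (1)--(2) say precisely that $\sigma$ lies in the decomposition group $D(\mathcal{P}/\mathfrak{p})$ and induces $x\mapsto x^{N\mathfrak{p}}$ on $k(\mathcal{P})$, which is the defining property of $\sigma_{\mathcal{P}/\mathfrak{p}}$. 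The converse is immediate from the definition of Frobenius once one notes that $\mathrm{Gal}(L/K)=\langle\sigma\rangle$ is abelian, so the Artin symbol is a well-defined element (not merely a conjugacy class) independent of the chosen $\mathcal{P}$ above $\mathfrak{p}$. Your closing remark about the role of unramifiedness is exactly the right caution: without it, an inertia element could satisfy (2) on a residue field of size $p$ while failing to be Frobenius, and the equivalence would break.

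One small clarification worth adding in a write-up: the ``usual conditions'' being matched here must be taken to include the degree-one requirement $N_{K/\mathbb{Q}}(\mathfrak{p})=p$ (as in Theorem \ref{mainthm} of the paper and in the statement being numerically verified), since conditions (1)--(2) genuinely encode $N\mathfrak{p}=p$ and not merely $\sigma_{\mathfrak{p}}=\sigma$; an unramified $\mathfrak{p}$ of higher residue degree with the correct Artin symbol would not satisfy (2). Your derivation makes this implicit, and it deserves to be said explicitly so the reader sees why minimising $p$ is the same as minimising $N_{K/\mathbb{Q}}(\mathfrak{p})$.
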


\begin{claim}
 If $K \subset L^{\sigma}$ is the fixed field of $\sigma$ then if $\mathcal{P}$ satisfies these 
 slightly different conditions then 
 $\mathfrak{p} = \mathcal{P}\cap K$ satisfies the conditions of the least prime in the Chebotarev density theorem with respect to the image of $\sigma$ in ${\rm Aut}_K(L)$.
  Hence, we may find an upper bound for the smallest value of $N_{K/\mathbb{Q}}(\mathfrak{p}) = p$ by finding the smallest value of $p$.
\end{claim}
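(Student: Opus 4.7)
The plan is to verify both defining properties of a Chebotarev prime---unramification of $\mathfrak{p}=\mathcal{P}\cap K$ in $L/K$, and agreement of its Frobenius class with the image $\bar\sigma$ of $\sigma$ in ${\rm Aut}_K(L)$---and, along the way, to observe that $N_{K/\Q}(\mathfrak{p})$ is forced to equal $p$, which is precisely what converts a search over small $p$ into a bound on the least $N\mathfrak{p}$. The setup gives a prime $\mathcal{P}$ of $L$ above an unramified rational prime $p$ satisfying (1) $\sigma(\mathcal{P})=\mathcal{P}$ and (2) $\sigma(x)\equiv x^p\pmod{\mathcal{P}}$ for all $x\in\O_L$, and the hypothesis $K\subseteq L^\sigma$ says $\sigma$ fixes $\O_K$ pointwise and hence descends to an element $\bar\sigma\in{\rm Aut}_K(L)$.

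First, unramification of $\mathfrak{p}$ is immediate from multiplicativity in the tower $\Q\subseteq K\subseteq L$: since $p$ is unramified in $L/\Q$, the factorisation $e(\mathcal{P}/p)=e(\mathcal{P}/\mathfrak{p})\,e(\mathfrak{p}/p)=1$ forces each ramification index to be $1$, so $\mathfrak{p}$ is unramified in both $L/K$ and $K/\Q$.

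Next, I would show that $\mathfrak{p}$ has residue degree one over $p$. For any $y\in\O_K$, the identity $\sigma(y)=y$ combined with (2) gives $y\equiv y^p\pmod{\mathcal{P}}$, and intersecting with $\O_K$ yields $y\equiv y^p\pmod{\mathfrak{p}}$. Hence every element of the finite field $\O_K/\mathfrak{p}$ satisfies $\bar y^p=\bar y$ and so lies in the prime subfield, giving $\O_K/\mathfrak{p}=\mathbb{F}_p$ and therefore $N_{K/\Q}(\mathfrak{p})=p$.

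Finally, condition (1) tells us that $\bar\sigma$ lies in the decomposition subgroup $D(\mathcal{P}/\mathfrak{p})\subseteq{\rm Aut}_K(L)$. Because $\mathfrak{p}$ is unramified in $L/K$, the Frobenius ${\rm Frob}_{\mathcal{P}/\mathfrak{p}}$ is the unique element of $D(\mathcal{P}/\mathfrak{p})$ inducing the $N\mathfrak{p}$-power map on $\O_L/\mathcal{P}$; since $N\mathfrak{p}=p$ by the previous step, condition (2) identifies $\bar\sigma$ precisely with ${\rm Frob}_{\mathcal{P}/\mathfrak{p}}$, so the Artin class $\sigma_\mathfrak{p}$ contains $\bar\sigma$. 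The one subtle point---and the reason one only obtains an upper bound rather than equality---is that conditions (1)--(2) on $\mathcal{P}$ cannot detect a Chebotarev prime $\mathfrak{p}$ of residue degree $>1$ over $p$, because then the Frobenius at $\mathcal{P}/p$ in ${\rm Aut}_\Q(L)$ would have to raise to a higher power of $p$; such primes could a priori have smaller absolute norm, but this is no real obstacle since Theorem~\ref{mainthm} is phrased in terms of degree-one primes.
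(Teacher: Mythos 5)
The paper explicitly declines to prove this claim (the appendix says ``We state the following claims without proof''), so there is no argument in the paper to compare against; your proof supplies what is missing and is correct. The three steps---unramification of $\mathfrak{p}$ by multiplicativity of ramification indices, deduction of $N_{K/\Q}(\mathfrak{p})=p$ by restricting condition~(2) to $\sigma$-fixed elements of $\O_K$ (using $K\subseteq L^\sigma$), and identification of $\sigma$ with $\mathrm{Frob}_{\mathcal{P}/\mathfrak{p}}$ via conditions (1)--(2) together with $N\mathfrak{p}=p$---are each carried out correctly, modulo the standing and necessary assumption that $L/K$ is Galois (you use it when invoking uniqueness of the Frobenius element of the decomposition group, and it is anyway required for the Chebotarev statement to make sense). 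One small quibble with your closing remark: the fact that Chebotarev primes of residue degree $>1$ over $\Q$ are invisible to conditions (1)--(2) is true, but it applies equally to the companion claim where $K=L^\sigma$, so it does not by itself explain why that claim is an ``if and only if'' while this one is only stated as an implication (hence ``upper bound''). The actual asymmetry is that for $K=L^\sigma$ the group $\langle\sigma\rangle$ is abelian, so the Frobenius over a given $\mathfrak{p}$ is independent of the choice of $\mathcal{P}$ and the equivalence is clean, whereas for a proper subfield one must in principle replace $\mathcal{P}$ by an ${\rm Aut}_K(L)$-conjugate to witness the Artin class, which the appendix's phrasing conservatively avoids claiming to do exhaustively. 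This does not affect the correctness of the implication you proved, which is all the claim asserts.
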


The above two claims are useful because they reduce the total number of cases that needed to be checked, but also because they make clear that an explicit check for any given field
and any given automorphism is actually computationally practical.
Indeed, PARI-GP includes all the tools necessary to make such a check, and hence find the smallest $p$.
\begin{rmk}
 Because $p$ is expected to be small, the runtime of this search is typically very small.
 
 We remark also that PARI-GP has all the tools necessary to compute ${\rm Aut}_{\mathbb{Q}}(L)$.
\end{rmk}

\subsection{Summary of Results}\label{sec:data}

Table \ref{Table-Fiori} summarizes the results we have obtained.

More complete tables, as well as searchable lists of all results for all fields referenced can be found at \cite{NumericalChebotarevWebsite}.

There are essentially three types of bounds considered

\begin{enumerate}

\item[A] bound $d_L^A$ which is based on the GRH-unconditional results.

\item[B] bound $B(\log d_L)^2$ which is based on the GRH-conditional results.

\item[C] bound $C(3e^\gamma/\pi)^2 \frac{ (\log d_L)^2 (\log(2\log\log d_L))^2}{(\log\log d_L)^2}$ which is based on an upper bound for the worst known family of fields \cite{Fiori10}.

\end{enumerate}

For each row of the table and each type of bound, there are no fields $L$ with $n_L=n_0$ and $d_L \leq d_0$, for which the least prime in the Chebotarev density theorem exceeds the bounds.
Additionally, for each row, and each type of bound, there exists a field $L$, with $n_L=n_0$, which (up to the precision given) realizes the given bound.
In many cases these worst case fields will have absolute discriminants larger than the verification height.
One can thus interpret each row as giving a \textit{lower bound} on the upper bound for the least prime in the Chebotarev theorem.
The column $d_{min}$ documents a lower bound for the absolute discriminant of fields of that degree, so that for all $L$ with $n_L=n_0$ we have $d_L \ge d_{min}$.
We recall that for degrees 9+ there are in fact no fields whatsoever with $n_L=n_0$ and $d_L \leq d_0$.
For degree $21$ and higher this verification height, and $d_{min}$ can be taken as at least $10^{[L:Q]}$.

\begin{small}

\begin{table}[h]

\caption{Table of worst case bounds}

\label{Table-Fiori}

\begin{tabular}{| c | c | c | p{1.5cm} | p{1.5cm} | p{1.5cm} | }

\hline

$n_0=[L:\mathbb{Q}]$ & $d_{min}$ & Verification Height $d_0$ & A & B & C\\

\hline

2 &$ 3 $& $16$ & 1.7712 &5.7997 &136.0600 \\

2 & & $16 < d_L \leq 10^{10}$ & 0.8071 & 1.5802 & 1.0389 \\

3 &$ 23$ & $10^{10}$ & 0.6591& 0.8803 &0.5661 \\

4 &$ 117$ & $10^8$ & 0.6098 & 0.9979 & 0.6823\\

5 &$ 1609$ & $10^8$ & 0.3559 & 0.6668 & 0.5297\\

6 &$ 9747$ & $10^8$ & 0.4335 & 0.6668 & 0.5297 \\

7 &$ 184607$ & $10^8$ & 0.2490 & 0.2050 & 0.1890 \\

8 & $1257728$& $10^7$ & 0.2925 & 0.5047 & 0.5327 \\

9 & $2.29\cdot 10^7 $& $2.29\cdot 10^7$ & 0.2183 & 0.1692 & 0.1658\\

10 & $1.56\cdot 10^8$& $1.56\cdot 10^8$ & 0.2295 & 0.4319 & 0.5069\\

11 &$3.91\cdot 10^9$ & $3.91\cdot 10^9$ & 0.1228 & 0.1047 & 0.1272 \\

12 & $2.74\cdot 10^{10}$& $2.74\cdot 10^{10}$ & 0.2107 & 0.3269 & 0.3393\\

13 & $7.56\cdot 10^{11}$& $7.56\cdot 10^{11}$ & 0.1365 & 0.0627 & 0.0690 \\

14 & $5.43\cdot 10^{12} $& $5.43\cdot 10^{12} $ & 0.1582 & 0.4006 & 0.5268 \\

15 & $1.61\cdot 10^{14}$ & $1.61\cdot 10^{14}$ & 0.1206 & 0.0866 & 0.1201 \\

16 &$1.17\cdot 10^{15}$& $1.17\cdot 10^{15}$ & 0.1463 & 0.2769 & 0.3410 \\

17 & $3.70\cdot 10^{16}$& $3.70\cdot 10^{16}$ & 0.1069 & 0.0478 & 0.0689 \\

18 & $2.73\cdot 10^{17}$& $2.73\cdot 10^{17}$ & 0.1279 & 0.3371 & 0.4778 \\

19 &$9.03\cdot 10^{18}$& $9.03\cdot 10^{18}$ & 0.0909 & 0.0327 & 0.0484\\

20+ &$6.74\cdot 10^{19} $ &$6.74\cdot 10^{19} $ & 0.1230 & 0.3370 & 0.4741 \\

\hline

\end{tabular}

\end{table}

\end{small}

\newpage

\providecommand{\MR}[1]{}
\providecommand{\bysame}{\leavevmode\hbox to3em{\hrulefill}\thinspace}
\providecommand{\MRhref}[2]{%
  \href{http://www.ams.org/mathscinet-getitem?mr=#1}{#2}
}
\providecommand{\href}[2]{#2}

\end{document}